\definecolor{webgreen}{rgb}{0,.5,0}
\definecolor{webbrown}{rgb}{.6,0,0}
\theoremstyle{plain}
\newtheorem{theorem}{Theorem}
\newtheorem{corollary}[theorem]{Corollary}
\newtheorem{lemma}[theorem]{Lemma}
\newtheorem{proposition}[theorem]{Proposition}
\theoremstyle{definition}
\newtheorem{question}[theorem]{Question}
\newtheorem{notation}[theorem]{Notation}
\theoremstyle{remark}
\newtheorem{remark}[theorem]{Remark}
\def\modd#1 #2{#1\ \mbox{\rm (mod}\ #2\mbox{\rm )}}
\def\Zee{\mathbb{Z}}
\def\Enn{\mathbb{N}}
\newcommand{\seqnum}[1]{\href{https://oeis.org/#1}{\rm \underline{#1}}}
\font\KFracFont=cmr12 at 20pt
\def\KKK{\mathop{\lower 4pt\hbox{\KFracFont K}}\limits}
\DeclareMathOperator{\sign}{sgn}
\author{J.-P. Allouche \\
CNRS, IMJ-PRG \\
Sorbonne Universit\'e \\
4 Place Jussieu \\
F-75252 Paris Cedex 05 France \\
\href{mailto:jean-paul.allouche@imj-prg.fr}{\tt jean-paul.allouche@imj-prg.fr} \\
\and
G.-N. Han\\
CNRS, IRMA\\
Universit\'e de Strasbourg\\
7 rue Ren\'e Descartes \\
F-67084 Strasbourg France\\
\href{mailto:guoniu.han@unistra.fr}{\tt guoniu.han@unistra.fr} \\
\and
J. Shallit \\
 School of Computer Science \\
 University of Waterloo \\
Waterloo, ON  N2L 3G1 \\
Canada \\
\href{mailto:shallit@uwaterloo.ca}{\tt shallit@uwaterloo.ca} 
}
\title{On some conjectures of P. Barry}
\date{June 24, 2020}
\begin{document}

\maketitle

\begin{abstract}
We prove a number of conjectures recently stated by P. Barry, related
to the paperfolding sequence and the Rueppel sequence.
\end{abstract}

\noindent AMS 2010 Classifications:  Primary 11B83, 11B85, 68R15
Secondary 11C20, 05A15, 15A15.

\noindent 
Keywords: Hankel determinant, Rueppel sequence, paperfolding sequence,
automatic sequence, regular sequence.

\section{Introduction}
\label{intro}
In his recent paper~\cite{barry}, P. Barry studied a number of integer 
sequences---in particular, the (regular) paperfolding sequence and the Rueppel sequence. 
Recall that the sequence $(j_n)_{n \geq 0}$ is defined by $j_0=0$, and, for $n > 0$, by the Jacobi-Kronecker
symbol $j_n = (\frac{-1}{n})$. It is sequence \seqnum{A034947} in the OEIS \cite{oeis}, and is, up to the 
first term, the $\pm 1$ {\it paperfolding sequence}, as noted by J. Sondow and recalled in \cite{barry}. 
It can thus be also defined by the relations 
$$
j_0 = 0, \ j_{2k}= j_k \ \mbox{\rm for all $k \geq 0$, and} \ j_{2k+1} = (-1)^k \ \mbox{\rm for all 
$k \geq 0$}.
$$
or the generating function
$$
\sum_{n\geq 0}j_n x^n = \sum_{k\geq 0} \frac{x^{2^k}}{1+x^{2^{k+1}}}.
$$

The {\it Rueppel sequence} $(r_n)_{n \geq 0} =
(1,1,0,1,0,0,0,1,\ldots)$ is the characteristic sequence of the set
$\{ 2^n - 1 \,:\, n \geq 0 \}$.  Alternatively, it is defined by
the generating function
$$
r(x)=\sum_{n\geq 0}r_n x^n= \sum_{n\geq 0} x^{2^n-1} = 1+x+x^3+x^7+x^{15}+\cdots
$$
One reason that P. Barry studied Rueppel sequence is its
relation with the famous Catalan sequence \cite{barry, Cigler2019, Deutsch2006Sagan}
$$r_n \equiv C_n=\frac{1}{n+1}\binom{2n}{n} \pmod 2.$$

We identify a sequence ${\bf a}=(a_0, a_1, a_2, \ldots)$ and its generating function 
$f=f(x)=a_0+a_1x+a_2x^2+\cdots$. Usually, $a_0=1$.
For each $n\geq 1$ the {\it Hankel determinant\/}
of the series $f$ (or of the sequence $\bf a$)  is defined by 
\begin{equation}
H_n(f) := \left|
\begin{matrix}
 a_0 & a_{1} & \ldots & a_{n-1} \cr
a_{1} & a_{2} & \ldots & a_{n} \cr
 \vdots  & \vdots  & \ddots &
 \vdots  \cr
a_{n-1} & a_{n} & \ldots & a_{2n-2} 
\end{matrix} \right|.
\end{equation}
We let $H_0(f)=1$. The {\it sequence of the Hankel determinants} of $f$ is defined as follows:
$$H(f):=(H_0(f), H_1(f), H_2(f), H_3(f), \ldots).$$ 

In \cite{barry}
Barry states several 
conjectures related to the (regular) paperfolding sequence and
the Hankel determinants of the modified
Rueppel sequences. 
In Section~\ref{sec2} we prove the first three conjectures, related to the paperfolding sequence.
Then, we suggest some further questions in Section~\ref{sec3}.
In Sections~\ref{han1}--\ref{han3},
we study the Hankel determinants of $1-xr(x)$, $1+xr(x)$, and $r(x)/(r(x)-x)$,
and prove Barry's Conjectures 6, 7, 8, 9, 10, 11, and 16,
respectively.

\section{On the first three conjectures of Barry}
\label{sec2}
One important sequence $(s_n)_{n \geq 0}$ in \cite{barry} is
the sequence \seqnum{A088748} in OEIS \cite{oeis}  defined by 
\begin{equation}\label{def:sn}
s_n = 1 + \displaystyle\sum_{0 \leq k \leq n} j_k,
\end{equation}
or by the generating function
	\begin{equation}
		\sum_{n\geq 0}s_n x^n =
		\frac{1}{1-x} \left(1+\sum_{k\geq 0}\frac{x^{2^k}}{1+x^{2^{k+1}}} \right).
\end{equation}

\begin{proposition}\label{prop-conj1}
The sequence $(s_n)_{n \geq 0}$ satisfies, for all $n \geq 0$, the relations
$$
\begin{aligned}
s_{2n} &= s_n + 
               \left\{ \begin{aligned}
                       &0, \ \mbox{\rm if $n$ is even;} \\ 
                       &1, \ \mbox{\rm if $n$ is odd.} \\ 
                       \end{aligned}
             \right. \\
s_{2n+1} &= s_n + 
               \left\{ \begin{aligned}
                       &1, \ \mbox{\rm if $n$ is even;} \\ 
                       &0, \ \mbox{\rm if $n$ is odd.} \\ 
                       \end{aligned}
             \right. \\            
\end{aligned}
$$
\end{proposition}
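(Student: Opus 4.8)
The plan is to reduce everything to the defining recursions for $(j_n)$ and then split into cases according to residues modulo $4$.

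First I would record the telescoping consequence of the definition $s_n = 1 + \sum_{0 \le k \le n} j_k$, namely that $s_m - s_{m-1} = j_m$ for every $m \ge 1$. Thus to compute $s_{2n} - s_n$ and $s_{2n+1} - s_n$ it suffices to sum the relevant $j_k$ over the appropriate ranges, or better, to find closed forms for the partial sums $\sigma_N := \sum_{0 \le k \le N} j_k$ and compare $\sigma_{2n}$, $\sigma_{2n+1}$ with $\sigma_n$. Using $j_{2k} = j_k$ and $j_{2k+1} = (-1)^k$, I would split the sum $\sigma_{2n}$ over even and odd indices $k \le 2n$: the even-indexed terms $j_{2i} = j_i$ for $0 \le i \le n$ reproduce $\sigma_n$, while the odd-indexed terms $j_{2i+1} = (-1)^i$ for $0 \le i \le n-1$ contribute $\sum_{0 \le i \le n-1} (-1)^i$, which is $0$ if $n$ is even and $1$ if $n$ is odd. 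This immediately yields
\[
s_{2n} - s_n = \sigma_{2n} - \sigma_n = \sum_{0 \le i \le n-1} (-1)^i = \begin{cases} 0, & n \text{ even},\\ 1, & n \text{ odd},\end{cases}
\]
which is the first claimed relation.

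For the second relation I would write $\sigma_{2n+1} = \sigma_{2n} + j_{2n+1}$ and use $j_{2n+1} = (-1)^n$, so that $s_{2n+1} - s_n = (s_{2n} - s_n) + (-1)^n$. Plugging in the two cases from the first relation gives, when $n$ is even, $0 + 1 = 1$, and when $n$ is odd, $1 + (-1) = 0$, which matches the stated formula for $s_{2n+1}$. Both identities also need a quick check at the boundary $n = 0$, where the empty alternating sum is $0$ and $j_1 = 1$, consistent with $s_0 = s_0$ and $s_1 = s_0 + 1$.

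I expect no serious obstacle here; the whole proof is a careful bookkeeping of index ranges. The only point requiring a little attention is getting the endpoints of the split sums right (whether the top odd index is $2n-1$ or $2n+1$) and evaluating the truncated geometric-type alternating sum $\sum_{0 \le i \le n-1}(-1)^i$ correctly in each parity class. An alternative and essentially equivalent route would be to work directly from the generating function $\sum_n s_n x^n = \frac{1}{1-x}\bigl(1 + \sum_k \frac{x^{2^k}}{1+x^{2^{k+1}}}\bigr)$ and extract the $2$-automatic structure, but the partial-sum argument above is cleaner and keeps the case analysis transparent.
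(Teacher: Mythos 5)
Your proposal is correct and follows essentially the same route as the paper: split the partial sum $\sum_{0 \le k \le 2n} j_k$ into even- and odd-indexed terms using $j_{2k}=j_k$ and $j_{2k+1}=(-1)^k$, evaluate the truncated alternating sum, and then obtain the $s_{2n+1}$ case from $s_{2n+1}=s_{2n}+j_{2n+1}$. No issues.
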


\begin{proof}
We write
$$
s_{2n} = 1 + \sum_{0 \leq k \leq 2n} j_k 
= 1 + \sum_{0 \leq m \leq n} j_{2m} + \sum_{0 \leq m \leq n-1} j_{2m+1}
= 1 + \sum_{0 \leq m \leq n} j_m + \sum_{0 \leq m \leq n-1} (-1)^m. 
$$
Hence
$$
s_{2n} = s_n + \sum_{0 \leq m \leq n-1} (-1)^m 
= s_n + \left\{ \begin{aligned}
                       &0, \ \mbox{\rm if $n$ is even;} \\ 
                       &1, \ \mbox{\rm if $n$ is odd.} \\ 
                       \end{aligned}
             \right.                     
$$
This implies
$$
s_{2n+1} = s_{2n} + j_{2n+1} = s_{2n} + (-1)^n 
= s_n + \left\{ \begin{aligned}
                       &1, \ \mbox{\rm if $n$ is even;} \\ 
                       &0, \ \mbox{\rm if $n$ is odd.} \\ 
                       \end{aligned}
             \right.
$$            
\end{proof}

\begin{theorem}\label{conj1}
Barry's Conjecture 1 is true: the locations of the occurrences of $m$ in the sequence 
$(s_n)_{n \geq 0}$ are given by those numbers whose base-$2$ representation has 
exactly $m-1$ runs. Furthermore the values of $m$ occurring in $\{s_0, s_1, \ldots, s_{2^N-1}\}$ 
are ${1, 2, \ldots, N+1}$.
\end{theorem}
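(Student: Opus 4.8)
The plan is to guess and then verify a closed form for $s_n$ in terms of the binary expansion of $n$. From the first few values $s_0,s_1,s_2,\dots = 1,2,3,2,3,4,3,2,3,\dots$ one sees that $s_n$ is one more than the number of \emph{runs} (maximal blocks of equal consecutive digits) in the base-$2$ representation of $n$. Writing $R(n)$ for this run-count, with the convention $R(0)=0$ (consistent with the theorem's own reading that $m=1$ corresponds to $0$ runs, attained only at $n=0$), the entire first assertion reduces to the single identity $s_n = R(n)+1$ for all $n\geq 0$: under it, $s_n=m$ holds exactly when $R(n)=m-1$, i.e.\ exactly when the binary representation of $n$ has $m-1$ runs.

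To prove $s_n=R(n)+1$ I would show that $R$ obeys the very recurrence established for $s$ in Proposition~\ref{prop-conj1}. The key observation is how appending a bit changes the run-count. For $n\geq 1$, passing to $2n$ appends a $0$ and passing to $2n+1$ appends a $1$; whether this creates a new run depends only on the last bit of $n$, that is, on the parity of $n$. Concretely, if $n$ is even then $R(2n)=R(n)$ and $R(2n+1)=R(n)+1$, while if $n$ is odd then $R(2n)=R(n)+1$ and $R(2n+1)=R(n)$; a direct check shows these identities persist at $n=0$ with the convention $R(0)=0$. These are precisely the relations of Proposition~\ref{prop-conj1}, which assert that $s_{2n}=s_n$ for $n$ even and $s_n+1$ for $n$ odd, and $s_{2n+1}=s_n+1$ for $n$ even and $s_n$ for $n$ odd. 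Since also $R(0)+1=1=s_0$, a straightforward strong induction on $n$ (writing any $n\geq 1$ as $2\lfloor n/2\rfloor$ or $2\lfloor n/2\rfloor+1$) yields $s_n=R(n)+1$.

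Finally, the ``furthermore'' clause follows by analysing the range of $R$ on the interval $0\leq n\leq 2^N-1$, which is exactly the set of integers whose binary representation has at most $N$ digits. A binary string of length at most $N$ has at most $N$ runs, so $R(n)\leq N$ on this interval, with equality attained by the alternating pattern $1010\cdots$ of length $N$; at the other extreme $R(n)=0$ only for $n=0$. Each intermediate value $1\leq k\leq N$ is realised, for instance by the $k$-bit number whose binary form is $1010\cdots$ with $k$ runs (of value less than $2^k\leq 2^N$). Hence $R$ takes exactly the values $0,1,\dots,N$ on this range, and therefore $s_n=R(n)+1$ takes exactly the values $1,2,\dots,N+1$, as claimed.

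I expect the only delicate points to be bookkeeping rather than conceptual: fixing the convention $R(0)=0$ so that the run-recurrence holds uniformly at the boundary $n=0$, and, in the second part, correctly pinning down both the maximal run-count $N$ (via the alternating word) and the surjectivity onto $\{0,1,\dots,N\}$.
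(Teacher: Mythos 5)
Your proposal is correct and takes essentially the same route as the paper: both arguments reduce the theorem to the identity $s_n = R(n)+1$ (which is exactly the content of Remark~\ref{adamson}) and verify it by checking that the run-count obeys the same parity-based recurrence as Proposition~\ref{prop-conj1}, your case analysis on the last bit of $n$ being precisely the paper's four cases $n=4b,4b+1,4b+2,4b+3$. Your explicit verification of the ``furthermore'' clause via the alternating word $1010\cdots$ is a nice touch, as the paper leaves that part implicit in its induction on $N$.
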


\begin{proof}
To prove that the statement of the theorem holds for all indices $n$ of $(s_n)_{n \geq 0}$, 
we prove by induction on $N$ that the property holds for all $m$ occurring at indices 
$n \in [0, 2^N-1]$ of $(s_n)_{n \geq 0}$. 

The claim holds for $N=0$, since the only index 
to consider is then $n=0$, and $s_0 = 1$, while the binary expansion of $0$ is empty, and hence 
has no runs. 

Suppose that the desired property holds for all $n \in [0, 2^N-1]$.  Every number 
$n \in [0, 2^{N+1}-1]$ can be written $n = 2a + r$ where $r \in \{0, 1\}$. Thus $a$ is necessarily 
in $[0, 2^N-1]$. There are four cases. We have, using Proposition~\ref{prop-conj1}:
\begin{itemize}

\item if $r = 0$ and $a$ even, say $a = 2b$, then $n = 4b$. Thus $s_n = s_{4b} = s_{2b}= s_a$.
Since $n=4b$ and $a=2b$ have the same number of runs, the property holds for $n$.

\item if $r = 0$ and $a$ odd, say $a = 2b+1$, then $n=4b+2$. Thus $s_n = s_{4b+1} = s_{2b} +1$.
Since $n=4b+2$ clearly has one more run then $a=2b+1$, the property holds for $n$.

\item if $r = 1$ and $a$ even, say $a = 2b$, then $n = 4b+1$. Thus $s_n = s_{4b+1} = s_{2b} + 1$.
Since $n=4b+1$ has one more run than $a=2b$, the property holds for $n$.

\item  if $r = 1$ and $a$ odd, say $a = 2b+1$, then $n = 4b+3$. Thus $s_n = s_{4b+3} = s_{2b+1}$.
Since  $n = 4b+3$ and $a = 2b+1$ have the same number of runs, the property holds for $n$.

\end{itemize}
This completes the proof.
\end{proof}

\begin{remark}\label{adamson}
An alternative statement of Theorem~\ref{conj1} is that the sum $\sum_{0 \leq k \leq n} j_k$
is equal to the number of runs in the binary expansion of $n$: this was noted by G. W. Adamson 
in a 2008 comment on \seqnum{A005811} in the OEIS \cite{oeis}.
\end{remark}

\bigskip

Now we address two more conjectures of Barry in \cite{barry}. First we prove a general statement.

\begin{proposition}\label{general}
Let $(c_n)_{n \geq 0}$ be an increasing sequence of integers. Let $(\lambda_n)_{n \geq 0}$ be the 
characteristic function of the set $\{c_0, c_1, \ldots, c_n, \ldots\}$.  Then
$$
\forall n \geq 0, \ \ c_n - \sum_{0 \leq k \leq c_n} (-1)^{\lambda_k} = 2n + 1.
$$
\end{proposition}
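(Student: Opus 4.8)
The plan is to evaluate the sum $\sum_{0 \leq k \leq c_n}(-1)^{\lambda_k}$ directly by a counting argument, and then substitute into the left-hand side. The key observation is that $(-1)^{\lambda_k}$ takes only two values: it equals $-1$ precisely when $\lambda_k = 1$, that is, when $k$ belongs to the set $\{c_0, c_1, \ldots\}$, and it equals $+1$ otherwise. Hence the sum is simply the number of indices $k$ in the range $[0, c_n]$ \emph{not} lying in the set, minus the number of indices in that range that \emph{do} lie in the set.

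Next I would count each of these two quantities. The range $\{0, 1, \ldots, c_n\}$ contains exactly $c_n + 1$ integers. Because $(c_n)_{n \geq 0}$ is (strictly) increasing, we have $c_0 < c_1 < \cdots < c_n$, so $c_0, \ldots, c_n$ are $n+1$ distinct elements of the set, all at most $c_n$; moreover $c_{n+1}, c_{n+2}, \ldots$ all exceed $c_n$, so these $n+1$ values are the only elements of the set in the range. Thus exactly $n+1$ of the indices satisfy $\lambda_k = 1$, and the remaining $(c_n + 1) - (n+1) = c_n - n$ satisfy $\lambda_k = 0$.

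Combining these counts gives
$$
\sum_{0 \leq k \leq c_n}(-1)^{\lambda_k} = (c_n - n) - (n + 1) = c_n - 2n - 1,
$$
and substituting into the left-hand side of the claimed identity yields $c_n - (c_n - 2n - 1) = 2n + 1$, as desired.

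I do not expect a genuine obstacle here. The only step requiring care is the counting: one must invoke the strict monotonicity of $(c_n)$ to guarantee both that the values $c_0, \ldots, c_n$ are distinct (so that they contribute $n+1$ indices and not fewer) and that no later term $c_j$ with $j > n$ slips into the range $[0, c_n]$. Once that is pinned down, the remainder is a one-line arithmetic simplification.
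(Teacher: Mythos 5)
Your proof is correct and is essentially the paper's argument in slightly different clothing: the paper writes $(-1)^{\lambda_k}=1-2\lambda_k$ and uses $\sum_{0\le k\le c_n}\lambda_k=n+1$, while you count the $n+1$ indices with $\lambda_k=1$ and the $c_n-n$ with $\lambda_k=0$ directly; both reduce to the same arithmetic. No issues.
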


\begin{proof}
First we note the equivalence
$$
c_n = r \ \Longleftrightarrow \ \left(\lambda_r = 1 \ \ 
\mbox{\rm and} \ \ \sum_{0 \leq k \leq r} \lambda_k = n+1 \right).
$$
But $(-1)^{\lambda_k} = 1 - 2\lambda_k$. Hence if $c_n = r$, 
then $\displaystyle\sum_{0 \leq k \leq r} \lambda_k = n+1 $. Thus
$$
\sum_{0 \leq k \leq c_n} (-1)^{\lambda_k} = \sum_{0 \leq k \leq r} (-1)^{\lambda_k} =
\sum_{0 \leq k \leq r} (1 - 2\lambda_k) =
r + 1 - 2 \displaystyle\sum_{0 \leq k \leq r} \lambda_k = r + 1 - (2n+2) = c_n - 2n -1.
$$
\end{proof}

\begin{remark}
Proposition~\ref{general} is related to the problem of computing the index $n$ of the
$n$th term of an increasing sequence of integers. In this direction the reader can consult
\cite{lambek-moser}.
\end{remark}

\medskip

\begin{theorem}\label{conj2}
Barry's Conjecture 2 is true.  Namely, define $a_0 = 0$, and
let $(a_n)_{n \geq 1}$ denote the increasing sequence of positive integers 
whose odd part is of the form $4k + 1$. Then we have $a_n + s_{a_n} = 2n + 1$.
\end{theorem}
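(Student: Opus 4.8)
The plan is to invoke Proposition~\ref{general} with the sequence $(c_n)_{n\ge 0}=(a_n)_{n\ge 0}$ itself, and then to identify the alternating sum $\sum_{0\le k\le a_n}(-1)^{\lambda_k}$ appearing there with $-s_{a_n}$. Since $a_0=0$ and $(a_n)_{n\ge 1}$ is increasing by definition, $(a_n)_{n\ge 0}$ is an increasing sequence of integers to which Proposition~\ref{general} applies, giving $a_n-\sum_{0\le k\le a_n}(-1)^{\lambda_k}=2n+1$, where $(\lambda_k)$ is the characteristic function of $\{a_0,a_1,\ldots\}$. It therefore suffices to prove that $\sum_{0\le k\le a_n}(-1)^{\lambda_k}=-s_{a_n}$.

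First I would compute $j_k$ for $k\ge 1$ in terms of the odd part of $k$. Writing $k=2^j q$ with $q$ odd, the relation $j_{2m}=j_m$ gives $j_k=j_q$, and since $j_{2t+1}=(-1)^t$, one obtains $j_q=1$ when $q\equiv 1\pmod 4$ and $j_q=-1$ when $q\equiv 3\pmod 4$. Thus for $k\ge 1$ we have $j_k=1$ exactly when the odd part of $k$ is of the form $4\ell+1$, that is, exactly when $k$ belongs to the set $\{a_1,a_2,\ldots\}$.

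The key step is to match this with the characteristic function $\lambda_k$. For $k\ge 1$, the membership $\lambda_k=1$ holds precisely when the odd part of $k$ is $\equiv 1\pmod 4$, which by the previous paragraph is exactly when $j_k=1$; and $\lambda_k=0$ exactly when $j_k=-1$. Since $(-1)^{\lambda_k}=1-2\lambda_k\in\{1,-1\}$, this gives $(-1)^{\lambda_k}=-j_k$ for all $k\ge 1$. For $k=0$ we have $a_0=0$, so $\lambda_0=1$ and $(-1)^{\lambda_0}=-1$, while $j_0=0$.

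Finally I would assemble these facts. Using $j_0=0$, we get $\sum_{0\le k\le a_n}(-1)^{\lambda_k}=-1+\sum_{1\le k\le a_n}(-j_k)=-1-\sum_{0\le k\le a_n}j_k=-1-(s_{a_n}-1)=-s_{a_n}$, where the last equality uses the definition $s_n=1+\sum_{0\le k\le n}j_k$. Substituting into the identity furnished by Proposition~\ref{general} yields $a_n+s_{a_n}=2n+1$, as desired. I expect the main obstacle to be the bookkeeping in the second and third paragraphs, namely correctly reducing $j_k$ to a condition on the odd part of $k$ modulo $4$ and confirming that this condition coincides exactly with the defining condition of $(a_n)$ (including the sign reversal $(-1)^{\lambda_k}=-j_k$); once that correspondence is pinned down, everything else is a direct substitution.
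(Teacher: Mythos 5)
Your proposal is correct and follows essentially the same route as the paper: both apply Proposition~\ref{general} to $(c_n)=(a_n)$ after observing that the characteristic function satisfies $(-1)^{\lambda_0}=-1$ and $(-1)^{\lambda_k}=-j_k$ for $k\ge 1$, and then unwind the alternating sum to $-s_{a_n}$ using $s_n=1+\sum_{0\le k\le n}j_k$. The only difference is that you spell out the reduction of $j_k$ to the residue of the odd part of $k$ modulo $4$, which the paper leaves implicit.
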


\begin{proof}
We note that the recursive definition of $j_n$ implies that the sequence $(a_n)_{n \geq 0}$ is 
exactly the sequence of integers consisting of  $0$ and the integers $m \geq 1$ such that 
$j_m = 1$. In other words, the characteristic function of the set $\{a_0, a_1, a_2, \ldots, a_n, \ldots\}$ 
is the sequence $(\lambda_n)_{n \geq 0}$ with $\lambda_0 = 1$ and $\lambda_n = (1 + j_n)/2$ for 
${n \geq 1}$. So  $(-1)^{\lambda_0} = -1$ and $(-1)^{\lambda_k} = - j_k$ for $k \geq 1$. Thus 
Proposition~\ref{general} above with $(c_n)_{n \geq 0} = (a_n)_{n \geq 0}$ yields
$$
a_n + s_{a_n} = a_n + 1 + \sum_{1 \leq k \leq a_n} j_k
 = a_n - \sum_{0 \leq k \leq a_n} (-1)^{\lambda_k} = 2n+1.
$$
\end{proof}

\begin{theorem}\label{conj3}
Barry's Conjecture 3 is true. Let $(b_n)_{n \geq 0}$ denote the increasing sequence of integers 
whose odd part is of the form $4k + 3$. Then we have $b_n - s_{b_n} = 2n + 1$.
\end{theorem}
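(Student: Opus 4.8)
The plan is to mirror the proof of Theorem~\ref{conj2} almost verbatim, with one sign flip that accounts for the difference between $a_n + s_{a_n}$ and $b_n - s_{b_n}$. The first step is to characterize the set $\{b_0, b_1, \ldots\}$ in terms of the values of $j_m$. Using the recursion $j_{2k}=j_k$, one sees that $j_m$ depends only on the odd part $t$ of $m$, and that $j_m=j_t$. Writing an odd $t$ as $2k+1$ gives $j_t=(-1)^k$, so $j_t=1$ when $t\equiv 1\pmod 4$ and $j_t=-1$ when $t\equiv 3\pmod 4$. Hence the integers whose odd part is of the form $4k+3$ are exactly the integers $m\ge 1$ with $j_m=-1$. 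This is the analogue of the observation in Theorem~\ref{conj2} that $(a_n)$ consists of $0$ together with the $m\ge 1$ satisfying $j_m=1$.

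Next I would set up the characteristic function. Let $(\mu_n)_{n\ge 0}$ be the characteristic function of $\{b_0,b_1,\ldots\}$. Since $0$ does \emph{not} lie in this set (its odd part is not of the form $4k+3$), we have $\mu_0=0$, while for $m\ge 1$ we have $\mu_m=(1-j_m)/2$. The key computation is then $(-1)^{\mu_k}$: from $(-1)^{\mu_k}=1-2\mu_k$ we get $(-1)^{\mu_0}=+1$ and $(-1)^{\mu_k}=1-(1-j_k)=j_k$ for $k\ge 1$. This is exactly where the present argument diverges from Theorem~\ref{conj2}: there $0\in\{a_n\}$ forced $(-1)^{\lambda_0}=-1$ and $(-1)^{\lambda_k}=-j_k$, whereas here the exclusion of $0$ gives the opposite signs.

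With these signs in hand I would invoke Proposition~\ref{general} with $(c_n)_{n\ge 0}=(b_n)_{n\ge 0}$, which yields
$$
b_n - \sum_{0\le k\le b_n}(-1)^{\mu_k} = 2n+1.
$$
It remains to identify the sum with $s_{b_n}$. Using the computation above and $j_0=0$,
$$
\sum_{0\le k\le b_n}(-1)^{\mu_k} = 1 + \sum_{1\le k\le b_n} j_k = 1 + \sum_{0\le k\le b_n} j_k = s_{b_n},
$$
the last equality being the definition \eqref{def:sn} of $s_n$. Substituting gives $b_n - s_{b_n}=2n+1$, as desired.

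There is no genuine obstacle in this argument; all the real work is already packaged in Proposition~\ref{general}. The only point requiring care is the boundary term at $k=0$: one must note that $0\notin\{b_n\}$ (so $\mu_0=0$, not $1$) and that $j_0=0$, so that the zeroth summand contributes $(-1)^{0}=1$ and matches the ``$+1$'' in the definition of $s_n$. Getting this sign bookkeeping right is precisely what turns the plus sign of Theorem~\ref{conj2} into the minus sign claimed here.
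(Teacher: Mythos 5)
Your proposal is correct and follows essentially the same route as the paper: identify $(b_n)$ as the integers $m\ge 1$ with $j_m=-1$, compute $(-1)^{\mu_0}=1$ and $(-1)^{\mu_k}=j_k$ for $k\ge 1$, and apply Proposition~\ref{general} together with the definition of $s_n$. Your extra care with the boundary term at $k=0$ and the justification that $j_m$ depends only on the odd part of $m$ are welcome details that the paper leaves implicit.
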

\begin{proof}
The integers in the sequence $(b_n)_{n \geq 0}$ are exactly the integers $m \geq 0$ for which 
$j_m = -1$. Hence the characteristic function of $(c_n)_{n \geq 1}$ is $(\mu_n)_{n \geq 0}$ 
with $\mu_0 = 0$ and $\mu_n = (1 - j_n)/2$ for ${n \geq 1}$. So  
$(-1)^{\mu_0} = 1$ and $(-1)^{\mu_k} = j_k$ for $k \geq 1$. Now we apply 
Proposition~\ref{general} with $(c_n)_{n \geq 0} $= $(b_n)_{n \geq 0}$ to get
$$
b_n - s_{b_n} = b_n - 1 - \sum_{1 \leq k \leq b_n}  j_k 
= b_n - \sum_{0 \leq k \leq a_n} (-1)^{\mu_k} = 2n+1.
$$ 
\end{proof}

We conclude this section with a table giving the first
few values of the sequences we have discussed.
\begin{center}
    \begin{tabular}{|c|ccccccccccccccccc}
    $n$ & 0 & 1 & 2 & 3 & 4 & 5 & 6 & 7 & 8 & 9 & 10 & 11 & 12 & 13 & 14 & 15 & 16 \\
    \hline
    $j_n$ & 0& 1& 1&$-1$& 1& 1&$-1$&$-1$& 1& 1& 1&$-1$&$-1$& 1&$-1$&$-1$& 1\\
    $s_n$& 1&2&3&2&3&4&3&2&3&4&5&4&3&4&3&2&3\\
    $a_n$ & 0& 1& 2& 4& 5& 8& 9&10&13&16&17&18&20&21&25&26&29\\
    $b_n$ & 3& 6& 7&11&12&14&15&19&22&23&24&27&28&30&31&35&38
    \end{tabular}
\end{center}

\section{A (non-)regularity property of the sequence $(a_n)_{n \geq 0}$}
\label{sec3}

While writing this paper, we realized that a result similar to Theorem~\ref{conj2} and to 
Theorem~\ref{conj3} was proved in \cite{abs}, where the paperfolding sequence was replaced 
with a generalized Thue-Morse sequence. For example, let $(t_n)_{n \geq 0}$ be the usual
Thue-Morse sequence (see, e.g., \cite{ubiq}), where $t_n$ is the parity of the sum of the binary 
digits of $n$. Let $(u_n)_{n \geq 0}$ denote the increasing sequence of {\it odious numbers}, 
namely the numbers $n$ for which $t_n = 1$, and $(v_n)_{n \geq 0}$ be the increasing sequence 
of {\it evil numbers}, namely the numbers $n$ for which $t_n = 0$. 
Then (see \cite[Corollary~1, p. 34]{abs}) we have
\begin{equation}
\forall n \geq 0, \ u_n = 2n + 1 - t_n, \ \ \mbox{\rm and} \ \ v_n = 2n + t_n. 
\label{onestar}
\end{equation}
This property can be compared with Proposition~\ref{general}. Namely, for all $n \geq 0$, we have
$t_{2n} = t_n$ and $t_{2n+1} = 1 - t_n$. Thus 
$$
\sum_{0 \leq k \leq n} (-1)^{t_k} = 
\begin{cases}
0,             &\mbox{\rm if $n$ is odd;} \\
(-1)^{t_n}, &\mbox{\rm if $n$ is even.}
\end{cases}
$$
Proposition~\ref{general} thus implies that (noting that for all $n$ one has $t_{u_n} = 1$)
$$
\forall n \geq 0, \  2n + 1 = u_n - \sum_{0 \leq k \leq u_n} (-1)^{t_k} = 
\begin{cases}
u_n, &\mbox{\rm if $u_n$ is odd;} \\
u_n - (-1)^{t_{u_n}} = u_n + 1, &\mbox{\rm if $u_n$ is even.}
\end{cases}
$$
This only says that $\lfloor \frac{u_n}{2}\rfloor = n$ (and similarly $\lfloor \frac{v_n}{2}\rfloor = n$),
which is clearly implied by, but weaker than, the identities in \eqref{onestar}, which state 
that $u_n = 1 - t_n \bmod 2$ and $v_n = t_n \bmod 2$. 

\bigskip

Recall that the {\em $r$-kernel} of a sequence $(x_n)_{n \geq 0} \in \Zee^\Enn$, for an integer 
$d \geq 2$, is the set of subsequences 
$$\{(x_{r^i n + j})_{n \geq 0}, \ i \geq 0, \ j \in [0, r^i -1]\}.$$ 
Also recall that a sequence $(x_n)_{n \geq 0}$ is called {\em $r$-automatic} if its $r$-kernel is finite, 
and $r$-regular if its $r$-kernel generates a $\Zee$-module of finite type (see \cite{AS}).

What precedes implies, in particular, that the sequences $(u_n)_{n \geq 0}$ and $(v_n)_{n \geq 0}$
are $2$-regular, while it is well-known that the Thue-Morse sequence $(t_n)_{n \geq 0}$ is 
$2$-automatic. Also note the asymptotic behavior $u_n \sim v_n \sim 2n$, for $n \to \infty$.

On the other hand the sequence $(j_n)_{n \geq 0}$, being equal to the $\pm 1$-paperfolding 
sequence up to its first term, is $2$-automatic. Also, we clearly have, from Remark~\ref{adamson}, 
that $s_n$ is at most one plus half the number of digits of $n$.   Thus, using Theorem~\ref{conj1}, 
we see that $s_n = {\mathcal O}(\log n)$, and so $a_n \sim 2n$. Hence $a_n = 2n + {\mathcal O}(\log a_n) = 
2n + {\mathcal O}(\log n)$. Similarly, using Theorem~\ref{conj2}, we get $b_n = 2n + {\mathcal O}(\log n)$. 

A question that now comes to mind is whether $(a_n)_{n \geq 0}$ and $(b_n)_{n \geq 0}$ are
$2$-regular sequences. We prove that they are not. First, we prove a proposition characterizing
the integers $n$ for which $a_{n+1} - a_n = 2$.

\begin{proposition}\label{16n+2}
\leavevmode
\begin{enumerate}[(i)]
\item
We have $a_{n+1} - a_n = 2$ for some integer $n$ if and only if there exists an integer $r$
such that $n = 8r + \frac{1 + s_{16r+2}}{2}$. Then $a_n = 16r +2$ and such an $r$ is unique,
with $r = \lfloor \frac{a_n}{16} \rfloor$.

\item Let denote $\psi(r) = 8r + \frac{1 + s_{16r+2}}{2}$, so that $a_{\psi(r)} = 16r + 2$ and
$r  = \lfloor \frac{a_{\psi(r)}}{16} \rfloor$. The function $\psi$ is increasing for $r \geq 0$. Furthermore,
for all $r, r'$ with $r \neq r'$ we have $|\psi(r) - \psi(r')| \geq 7$.
\end{enumerate}
\end{proposition}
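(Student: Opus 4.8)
\emph{Plan.} I would treat parts (i) and (ii) separately, in both cases exploiting the $2$-adic recursions $j_{2k}=j_k$, $j_{2k+1}=(-1)^k$, the $s$-recursion of Proposition~\ref{prop-conj1}, and the identity $a_n+s_{a_n}=2n+1$ of Theorem~\ref{conj2}. Recall (from the proof of Theorem~\ref{conj2}) that $(a_n)_{n\ge 1}$ is exactly the increasing enumeration of the integers $m\ge 1$ with $j_m=1$, together with $a_0=0$.

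For the forward direction of (i), I would first translate $a_{n+1}-a_n=2$ into a local condition on the paperfolding sequence: writing $a_n=m$, and noting that $a_1-a_0=1$ forces $m\ge 1$, the gap equals $2$ exactly when $j_m=1$, $j_{m+1}=-1$, and $j_{m+2}=1$. The key step is a short parity analysis via the recursion. If $m=2k+1$ were odd, then $j_{m+2}=j_{2(k+1)+1}=(-1)^{k+1}=-j_m$, so $j_m$ and $j_{m+2}$ cannot both equal $1$; hence $m=2k$ is even. The three conditions then read $j_k=1$, $(-1)^k=-1$, $j_{k+1}=1$, and iterating the recursion twice more (once through the parity of $k$, once through $k+1$) pins down $m\equiv 2\pmod{16}$, i.e. $m=16r+2$ with $r=\lfloor a_n/16\rfloor\ge 0$ unique. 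Applying Theorem~\ref{conj2} to $a_n=16r+2$ gives $2n+1=16r+2+s_{16r+2}$, hence $n=8r+\tfrac{1+s_{16r+2}}{2}=\psi(r)$.

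For the converse and the preamble identities of (ii), I would run this backwards. For each $r\ge 0$ a direct computation from the recursion gives $j_{16r+2}=1$, so $16r+2$ occurs in the sequence, say $16r+2=a_{n'}$; Theorem~\ref{conj2} then forces $n'=\psi(r)$, establishing $a_{\psi(r)}=16r+2$ and $\lfloor a_{\psi(r)}/16\rfloor=r$. The same local computation yields $j_{16r+3}=-1$ and $j_{16r+4}=1$, so the successor of $16r+2$ is $16r+4$ and $a_{\psi(r)+1}-a_{\psi(r)}=2$, closing the ``if and only if''. For (ii) the crux is the consecutive difference
\[
\psi(r+1)-\psi(r)=8+\frac{s_{16r+18}-s_{16r+2}}{2}.
\]
I would evaluate $s_{16r+2}$ and $s_{16r+18}$ by repeatedly applying Proposition~\ref{prop-conj1}, peeling off the four low-order binary digits in each case until only $s_r$, respectively $s_{r+1}$, remains. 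The parity corrections should collapse to
\[
s_{16r+18}-s_{16r+2}=(s_{r+1}-s_r)+(-1)^r=j_{r+1}+(-1)^r,
\]
using $s_{r+1}-s_r=j_{r+1}$. Since $j_{r+1}\in\{-1,1\}$ for $r\ge 0$ and $(-1)^r\in\{-1,1\}$, the numerator lies in $\{-2,0,2\}$, so $\psi(r+1)-\psi(r)\in\{7,8,9\}$. Each consecutive difference is thus positive (giving monotonicity) and at least $7$, and telescoping yields $|\psi(r)-\psi(r')|\ge 7|r-r'|\ge 7$ for $r\neq r'$.

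I expect the main obstacle to be the reduction in (ii): correctly iterating the $s$-recursion through the four low-order binary digits of $16r+2$ and $16r+18$ while tracking the parity-dependent $+0$/$+1$ corrections, so that the difference collapses \emph{exactly} to $j_{r+1}+(-1)^r$. By comparison, the parity case analysis in (i) is routine.
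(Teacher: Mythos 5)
Your proposal is correct and follows essentially the same route as the paper: part (i) pins down $a_n \equiv 2 \pmod{16}$ by a local $2$-adic analysis and then invokes Theorem~\ref{conj2}, and part (ii) reduces $\psi(r+1)-\psi(r)$ to $8+\tfrac{1}{2}(s_{2r+2}-s_{2r})$ with $s_{2r+2}-s_{2r}=(s_{r+1}-s_r)+(-1)^r$. The only cosmetic differences are that you run the mod-$16$ analysis through the $j$-recursion rather than through membership in the value set $A$, and you bound $|s_{r+1}-s_r|\le 1$ via the immediate identity $s_{r+1}-s_r=j_{r+1}$ rather than via the run-count interpretation; both work.
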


\begin{proof}
\leavevmode
\begin{enumerate}[(i)]
\item First we note that $\frac{1 + s_{16r+2}}{2}$ is always an integer: the number of runs of an even
number is even, and hence $s(m)$ must be odd when $m$ is an even integer. 

Now let $n$ be such
that $a_{n+1} - a_n = 2$. Write $a_n = 4k+j$ with $j \in [0,3]$. Let $A$ denote the set of values of 
the sequence $(a_n)_{n \geq 0}$:  $A = \{0, 1, 2, 4, 5, 8, 9, 10, 13, 16, 17, 18, 20, 21,  \ldots\}$ 
We have that $j \neq 0$, for otherwise $a_n + 1 = 4k+1 \in A$ and $a_{n+1} = a_n +1$. We also have
$j \neq 1$, for otherwise $a_n + 2 \equiv \modd{3} {4}$ does not belong to $A$, and hence cannot equal 
$a_{n+2}$. Since $j \neq 3$ because $4k+3 \notin A$, we must have $a_n = 4k + 2$. Since
$4k+2$ belongs to $A$ if and only if $2k+1$ belongs to $A$, if and only if $k$ is even, there exists
$\ell$ with $k = 2\ell$. But then $a_{n+1} = a_n + 2 = 4k + 4 = 8 \ell + 4$ belongs to $A$: this is
equivalent to saying that $2 \ell +1$ belongs to $A$, which holds if and only if $\ell$ is even, say
$\ell = 2r$. Thus $a_n = 4k + 2 = 16 r + 2$. 
Theorem~\ref{conj2} then implies that
$16 r + 2 + s_{16n+2} = 2n + 1$. Hence $n = 8r + \frac{1 + s_{16r+2}}{2}$. 

Conversely, suppose
that there exists an integer $r$ such that $n = 8r + \frac{1 + s_{16r+2}}{2}$. The number $16n+r$
belongs to $A$, and hence there exists an integer $m$ with $a_m = 16 r + 2$. Since $a_m + 1 = 16r+3$
does not belong to $A$, and $a_m + 2 = 16r + 4 = 4(4r+1)$ belongs to $A$, we have that 
$a_{m+1} = a_m + 2$. Hence $a_{m+1} - a_m = 2$. To finish the proof, we claim that $m = n$.
Using the first part of the proof, we have that $m = 8r + \frac{1 + s_{16r+2}}{2}$. Hence $m = n$.

\item To prove that $\psi$ is increasing and that $|\psi(r) - \psi(r')| \geq 7$, it suffices to prove the 
inequality $\psi(r+1) - \psi(r) \geq 7$ for all $r \geq 0$. Thus it suffices to prove that 
$8 + \frac{s_{16r+18} - s_{16r+2}}{2} \geq 7$, for all $r \geq 0$. This last inequality would be 
implied by the inequality $|s_{16r+18} - s_{16r+2}| \leq 2$. But the properties of $s_n$ given in 
Proposition~\ref{prop-conj1} can be rewritten as follows:
$$
\left\{
\begin{aligned}
s_{4n}    &= s_{2n} \\
s_{4n+1} &= s_{2n}  + 1 \\
s_{4n+2} &= s_{2n+1} +1 \\
s_{4n+3} &= s_{2n+1}. \\
\end{aligned}
\right.
$$
Therefore $s_{16n+2} = s_{8n+1} + 1 = s_{4n} + 2 = s_{2n} + 2$. 
Hence 
$$s_{16r+18} - s_{16r+2} = s_{16(r+1)+2} - s_{16r+2} = s_{2r+2}- s_{2r}.$$ 
We distinguish two
cases according to the parity of $r$:
$$
\begin{aligned}
\mbox{\rm If $r = 2t$, then}     \ \ &s_{2r+2} - s_{2r} = s_{4t+2} - s_{4t} = s_{2t+1} +1 - s_{2t} 
= s_{r+1} - s_r + 1; \\
\mbox{\rm If $r = 2t+1$, then} \ \ &s_{2r+2} - s_{2r} = s_{4t+4} - s_{4t+2} = s_{2t+2} - s_{2t+1} - 1
= s_{r+1} - s_r - 1. \\
\end{aligned}
$$
Now it suffices to prove that $|s_{n+1} - s_n| \leq 1$, for all $n \geq 0$. But $s_n$ is equal to $1+$ 
the number of runs in the binary expansion of $n$ (see Remark~\ref{adamson} above). And it is easy
to see that the absolute value of the difference of the number of runs for two consecutive integers is 
equal to $1$ (see, e.g., the remark before Theorem~4 in \cite{shallit-norgard}).
\end{enumerate}
\end{proof}

\begin{theorem}\label{not-reg}
The sequence $(a_n)_{n \geq 0}$ is not $q$-regular for any $q \geq 2$.
\end{theorem}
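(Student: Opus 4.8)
The plan is to replace $(a_n)_{n\ge 0}$ by its first–difference sequence $d_n:=a_{n+1}-a_n$ and to exploit that this difference is bounded. Indeed, every integer $\equiv 1\pmod 4$ has odd part $\equiv 1 \pmod 4$ and so lies in $A=\{a_0,a_1,\dots\}$; since the numbers $\equiv 1\pmod4$ are spaced $4$ apart, two consecutive elements of $A$ differ by at most $4$, so $d_n\in\{1,2,3,4\}$ for all $n$. If $(a_n)$ were $q$-regular then $(d_n)$ would be $q$-regular as well (regularity is preserved by shifts and differences), and a bounded $q$-regular integer sequence is $q$-automatic \cite{AS}. Hence it suffices to show that $(d_n)$ is not $q$-automatic for any $q\ge 2$. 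Applying a coding, the characteristic sequence of $C:=\{n:d_n=2\}$ would also be $q$-automatic; by Proposition~\ref{16n+2} we have $C=\{\psi(r):r\ge 0\}$ with $\psi$ strictly increasing and $\psi(r+1)-\psi(r)\in\{7,8,9\}$, so $C$ is syndetic.

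\textbf{The case $q$ not a power of $2$.} Because $C$ is syndetic with all gaps at most $9$, the gap to the next element of $C$ is read inside a bounded window, so the gap sequence $(\psi(r+1)-\psi(r))_r$ is recovered from the characteristic sequence of $C$ by a return-word construction and should again be $q$-automatic. The computation inside the proof of Proposition~\ref{16n+2}(ii) shows $\psi(r+1)-\psi(r)=8+\tfrac12\bigl(s_{2r+2}-s_{2r}\bigr)$, where $s_{2r+2}-s_{2r}$ equals, up to a sign governed by the parity of $r$, the increment $s_{r+1}-s_r=j_{r+1}$ of the run-counting sequence (using $s_n=1+\sum_{0\le k\le n}j_k$). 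Thus the gap sequence is a fixed finite-state recoding of the $\pm1$ paperfolding sequence $(j_{r+1})_r$; as such it is $2$-automatic, and since the paperfolding sequence is not eventually periodic, neither is it. So from the assumption I would obtain a single sequence that is simultaneously $2$-automatic and $q$-automatic yet not eventually periodic, which Cobham's theorem forbids whenever $2$ and $q$ are multiplicatively independent, that is, whenever $q$ is not a power of $2$.

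\textbf{The case $q=2^{k}$, and the main obstacle.} For $q$ a power of $2$ Cobham's theorem is vacuous, since $2^{k}$-automaticity coincides with $2$-automaticity and $2^{k}$-regularity with $2$-regularity \cite{AS}; here I must instead prove \emph{directly} that the characteristic sequence of $C$ is not $2$-automatic, and this is the heart of the argument. The obstruction is the unbounded, aperiodic drift
$$
\psi(r)-8r=\frac{1+s_{16r+2}}{2}=\frac{3+s_{2r}}{2},
$$
where I used $s_{16r+2}=s_{2r}+2$ from the proof of Proposition~\ref{16n+2}; this tends to infinity because $s_{2r}=1+\mathrm{runs}(2r)$ is unbounded (Remark~\ref{adamson}). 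A finite automaton reading $n$ in base $2$ would have to remember the ever-growing offset between the location $\psi(r)$ of the $r$-th element of $C$ and its expected location $8r$, which is impossible with finitely many states. I would make this rigorous by exhibiting infinitely many pairwise distinct sequences in the $2$-kernel of the characteristic sequence of $C$, separating them by the values $s_{2r}$ taken along suitable subsequences of $r$.

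\textbf{Where the difficulty lies.} The hard part is exactly this last step. Every object one can cleanly extract from $C$ — the gap sequence and the drift sequence $(s_{2r})_r$ — is itself $2$-automatic or $2$-regular, because it is built from a genuinely $2$-adic object (run counts, i.e.\ paperfolding). Consequently, in base $2$ there is no ``foreign base'' to play off against, and the failure of regularity must come entirely from the reconstruction of $C$ out of its gaps, namely the placing of marks at the summatory positions $\psi(r)=\psi(0)+\sum_{s<r}\bigl(\psi(s+1)-\psi(s)\bigr)$, an operation that destroys automaticity precisely when these partial sums drift away from a single arithmetic progression. Pinning down the $2$-kernel to produce the infinitely many distinct members, together with justifying for non-powers of $2$ that the return-word (gap) sequence of the syndetic automatic set $C$ is automatic in the same base, is where essentially all of the work resides.
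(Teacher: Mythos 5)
Your reduction is sound and matches the paper's opening move: since every integer $\equiv 1 \pmod 4$ lies in $A$, the difference sequence $(d_n)$ is bounded, so $q$-regularity of $(a_n)$ would force $q$-automaticity of $(d_n)$, and via a coding it suffices to kill the characteristic sequence of $C=\{n : d_n=2\}=\psi(\mathbb{N})$. You also correctly locate all the relevant structure (Proposition~\ref{16n+2}, the gaps in $\{7,8,9\}$, the drift $\psi(r)-8r=(3+s_{2r})/2$). But both of the steps that actually constitute the proof are left as promises, and the heuristics you offer in their place are not sound. For the case $q=2^k$, ``a finite automaton would have to remember the ever-growing offset'' proves nothing: the set $A$ itself is syndetic, its counting function drifts unboundedly from $n/2$ (since $a_n-2n=1-s_{a_n}$), and yet its characteristic sequence \emph{is} $2$-automatic --- so unbounded drift of the $r$-th element from an arithmetic progression is not an obstruction to automaticity of the image set. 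What is needed, and what the paper supplies in Theorem~\ref{not-automatic}, is an explicit exhibition of infinitely many distinct $2$-kernel elements: one takes $t=\frac{2}{3}(2^{2^{\alpha+1}-4}-1)$ and $r=2^{2^\alpha-3}t$ so that $\psi(r)=2^{2^\alpha}t+2^\alpha$ lands exactly on a kernel position, and then a perturbed $r'$ in base $\beta>\alpha$ so that $\psi(r')$ misses the corresponding position by exactly $1$, whence the separation $|\psi(r'')-\psi(r')|\ge 7$ guarantees $d(2^{2^\beta}t+2^\beta)\neq 2$. Nothing in your sketch carries out or substitutes for this computation.

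For $q$ not a power of $2$, your key step --- that the gap sequence $(\psi(r+1)-\psi(r))_r$, being a ``return-word'' derivative of the $q$-automatic characteristic sequence of the syndetic set $C$, ``should again be $q$-automatic'' --- is precisely the kind of rank-reindexing (position-indexed data to rank-indexed data) whose failure to preserve regularity is the content of the whole theorem, so it cannot be asserted without proof; making it rigorous requires Durand's theory of derived sequences of substitutive sequences, i.e., essentially the machinery the paper invokes anyway. The paper's route here is different and fully rigorous: it proves directly (Theorem~\ref{morphic}) that $\mathbf{d}'$ is a decoration $g(f^{\infty}(0))$ of the fixed point of a $2$-uniform morphism, hence $2^k$-substitutive, and then applies Durand's generalization of Cobham's theorem to conclude that $q$-automaticity would force $q$ to be a power of $2$ or the sequence to be ultimately periodic, both already excluded. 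You have the right skeleton and an accurate diagnosis of where the difficulty lies, but the two load-bearing arguments are missing.
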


\begin{proof}
If the sequence $(a_n)_{n \geq 0}$ were $q$-regular, its difference 
sequence, say $(d_n)_{n \geq 0}$, where $d_n = a_{n+1} - a_n$, would be 
$q$-regular as well. But $(d_n)_{n \geq 0}$ takes only finitely
many values: recall that all integers congruent to $1$ modulo $4$ are values of $a_n$, so that
$d_n \in [1, 4]$. Thus, if $(d_n)_{n \geq 0}$ were $q$-regular, it would be $q$-automatic.
The proof that the sequence $(d_n)_{n \geq 0}$ is not $q$-automatic is 
given in the next three theorems: we first prove that $(d_n)_{n \geq 0}$ 
is not $2$-automatic. Next, we prove that it is morphic.  Finally, we prove that it is not
$q$-automatic for any $q \geq 2$.
\end{proof}

\begin{theorem}\label{not-automatic}
The sequence $(d_n)_{n \geq 0}$ is not $2$-automatic.
\end{theorem}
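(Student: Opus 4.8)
The plan is to reduce the statement to the non-automaticity of a single level set and then apply the pumping lemma. If $(d_n)_{n\ge0}$ were $2$-automatic, then for each value $v$ the set $\{n : d_n = v\}$ would be $2$-automatic as well, being the preimage of $v$ under the output map of a DFAO. I would therefore focus on $S := \{n : d_n = 2\}$. By Proposition~\ref{16n+2}(i), together with the identity $s_{16r+2}=s_{2r}+2$ established in its proof, the elements of $S$ are exactly the numbers $\psi(r)=8r+\frac{1+s_{16r+2}}2 = 8r + \tfrac12\,\mathrm{runs}(2r) + 2$, where $\mathrm{runs}(2r)$ denotes the number of runs in the binary expansion of $2r$ (using Remark~\ref{adamson}). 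The crucial feature is that the ``drift'' $\psi(r)-8r$ is unbounded yet of size only $O(\log r)$; it is this slow, unbounded drift that I expect to obstruct regularity.

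Concretely, $S$ is $2$-automatic if and only if the language $L$ of most-significant-digit-first base-$2$ representations of the elements of $S$ is regular, and I would derive a contradiction by pumping $L$. Let $p$ be the pumping length. I would choose an index $r$ whose binary expansion begins with a very long alternating block $1010\cdots$, long enough that the top $p$ bits of $\psi(r)$ still lie inside the alternating region. Such an $r$ has roughly $\log_2 r$ runs, so its drift $\tfrac12\,\mathrm{runs}(2r)+2$ is of order $\log_2 r$ and occupies only the lowest $O(\log\log r)$ bits of $\psi(r)$; since an alternating block contains no long run of $1$'s, the carries produced when this drift is added to $8r$ die out within those low bits, and the high-order alternating pattern of $8r$ survives intact in $\psi(r)$. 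Writing $(\psi(r))_2 = xyz$ with $|xy|\le p$ and $|y|\ge 1$ as in the pumping lemma, the factor $y$ then sits inside this clean alternating high block.

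Pumping $y$ upward, $xy^kz$, inserts alternating bits into the high part, which strictly increases the number of runs of the underlying index $r'_k=\lfloor xy^kz / 8\rfloor$, hence makes its drift $\tfrac12\,\mathrm{runs}(2r'_k)+2$ grow without bound in $k$; meanwhile the low-order suffix $z$, which encodes the fixed drift of the original $r$, is unchanged. Since $\psi$ is strictly increasing with $\psi(r')\sim 8r'$, the only possible witness to membership $xy^kz\in S$ is the index $r'$ forced to lie within $O(1)$ of $(xy^kz)/8$, and for that index the true value $\psi(r')$ carries in its low bits the growing drift, which cannot agree with the frozen suffix $z$ once $k$ is large. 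Hence $xy^kz\notin S$ for large $k$, contradicting the pumping lemma, so $(d_n)$ is not $2$-automatic.

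The main obstacle is the carry bookkeeping in the relation $\psi(r)=8r+(\text{drift})$: because the drift exceeds $8$, it is not cleanly separated from the bits of $r$ in binary, and one must verify (i) that adding it to $8r$ leaves the high-order alternating pattern untouched, and (ii) that for the pumped value $xy^kz$ no nearby index $r'$ can repair the mismatch between its forced, growing drift and the frozen low-order suffix. Both points rest on the fact that an alternating block kills carry propagation quickly, which confines all the entanglement between $r$ and its drift to the bottom $O(\log\log r)$ bits.
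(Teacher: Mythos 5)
Your reduction is sound as far as it goes: if $(d_n)$ were $2$-automatic then the level set $S=\{n: d_n=2\}$ would be a $2$-automatic set, and by Proposition~\ref{16n+2}(i) together with $s_{16r+2}=s_{2r}+2$ and Remark~\ref{adamson}, $S$ is exactly $\psi(\mathbb{N})$ with $\psi(r)=8r+2+\tfrac12\,\mathrm{runs}(2r)$. This is the same object the paper exploits (it too lives off the $O(\log)$ drift and alternating binary patterns), but the paper works inside the $2$-kernel: it evaluates $d$ at the single explicit family of positions $2^{2^{\beta}}t+2^{\beta}$ for one well-chosen alternating $t$, computes $\psi$ exactly at two explicit arguments, and invokes the separation $|\psi(r)-\psi(r')|\geq 7$ to certify non-membership. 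Your pumping-lemma route, as sketched, has two genuine gaps.

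First, the decomposition $w=xyz$ is handed to you by the adversary, and nothing prevents $|y|=1$. A single bit of the alternating block, when pumped, merely lengthens one run: $\mathrm{runs}(xy^kz)$ does not grow with $k$, so the ``growing drift'' that powers your whole argument is simply absent in this case, and you have no fallback. That single-bit pumping is not automatically fatal to membership in $S$ is shown by the family $\psi(2^{j})=2^{j+3}+3$, whose representations $10^{m}11$ lie in the language and are literally closed under pumping a $0$. Second, even when the drift does grow, ``the growing drift cannot agree with the frozen suffix $z$'' is not a proof. Membership of $N_k=\mathrm{val}(xy^kz)$ in $S$ asks for \emph{some} $r''$ with $8r''+2+\tfrac12\mathrm{runs}(2r'')=N_k$; the candidate $r''$ is not within $O(1)$ of $N_k/8$ as you assert but ranges over an interval of length $\Theta(\log N_k)$ below it, its own low-order bits are $z$ shifted down by the (growing) drift rather than frozen, and since $|\mathrm{runs}(m+1)-\mathrm{runs}(m)|=1$ the map $r''\mapsto 8r''+\mathrm{drift}(r'')$ advances in steps of $7$, $8$, or $9$ and so hits roughly one in eight integers near $N_k$. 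You must show it misses $N_k$ for every candidate and every large $k$, and the sketch supplies no mechanism for that; this is exactly the bookkeeping the paper's explicit computation of $\psi(r')=2^{2^{\beta}}t+2^{\beta}+1$ combined with the gap $\geq 7$ is designed to avoid.
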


\begin{proof}
Since a $2$-automatic sequence 
$(x_n)_{n \geq 0}$ is characterized by the fact that its $2$-kernel (i.e., the set of subsequences 
$\{(x_{2^k n + j})_{n \geq 0}, \ k \geq 0, \ j \in [0, 2^k - 1]\}$) is finite, it suffices to prove that the 
subsequences $(u_{\alpha}(n))_{n \geq 0}$ and $(u_{\beta}(n))_{n \geq 0}$ are distinct, where 
$u_{\alpha}(n) := d(2^{2^{\alpha}} n + 2^{\alpha})$ and $u_{\beta}(n) = d(2^{2^{\beta}} n + 2^{\beta})$  
with $\alpha < \beta$ and $\alpha$ large enough. First note that
$$
t = \underbrace{1010 \cdots 10}_{\substack{j \ \mbox{\rm\scriptsize blocks} \ 10}} \ 
\mbox{\rm (in base $2$)} \ \Longrightarrow \ t = \frac{2}{3}(2^{2j} - 1) \ \mbox{\rm and} \  s(t) = 2j+1.
$$
Now take $j = 2^{\alpha} - 2$ and $t = \frac{2}{3} (2^{2^{{\alpha} + 1} - 4} - 1)$, so that 
$s(t) = 2^{\alpha+1} - 3$. Define $r = 2^{2^{\alpha} - 3} t$. 
Then $s(16r + 2) = s(2^{2^{\alpha} + 1} t + 2) = 2 + s(2t) = 2 + s(t)$ (recall that $t$ is even). 
So $s(16r + 2) = 2 + 2^{{\alpha}+1} - 3 =  2^{{\alpha}+1} - 1$. 
Hence $\psi(r) = 8r + 2^{\alpha} = 2^{2^{\alpha}} t + 2^{\alpha}$. 
We deduce, using Proposition~\ref{16n+2}, that $u_{\alpha}(t) = 2$. 

Now we prove that $u_{\beta}(t) \neq 2$. Define $r' = 2^{2^{\beta} - 3} t + 2^{\beta - 3} - 2^{\alpha - 3}$.
Then 
\begin{align*}
s(16 r' + 2) &= s(2^{2^{\beta} + 1} t + 2^{\beta + 1} - 2^{\alpha + 1} + 2) \\
&= 
s(2t) + s(2^{\beta + 1} - 2^{\alpha + 1} + 2) \\
& = s(2t) + 4 = s(t) + 4 = 2^{\alpha + 1} + 1.
\end{align*}
Hence $\psi(r') = 8r' + \frac{1 + s(16 r' + 2)}{2} = 2^{2^{\beta}} t + 2^{\beta} + 1$.
Thus $\psi(r') - (2^{2^{\beta}} t + 2^{\beta}) =  1$. Since $|\psi(r'') - \psi(r')| \geq 7$ for all $r''\neq r'$
(Proposition~\ref{16n+2}), the integer $(2^{2^{\beta}} t + 2^{\beta})$ cannot be equal to some
$\psi(r'')$. Hence $u_{\beta}(t) = a(2^{2^{\beta}} t + 2^{\beta}) \neq 2$ (again from 
Proposition~\ref{16n+2}). Hence $u_{\beta}(t) \neq u_{\alpha}(t)$, so that the sequences $u_{\beta}$ 
and $u_{\alpha}$ are distinct. A similar proof gives that $(b_n)_{n \geq 0}$ is not $2$-regular.
\end{proof}

\medskip

The next theorem proves that the sequence $(d_n)_{n \geq 0}$ is morphic.
For more about morphic sequences, see, e.g., \cite{AS}.

\begin{theorem}\label{morphic}
The sequence ${\mathbf d} = (d_n)_{n \geq 0}$ is morphic. More precisely, 
let ${\mathbf d'} = (d_{n+1})_{n \geq 0}$. If we define the morphisms $f$
on $\{0,1,2,3\}^*$ and $g$ from $\{0,1,2,3\}^*$ to $\{1,2,3,4\}^*$ as follows:
$$
f(0) = 01, \ f(1) = 21, \ f(2) = 03, \ f(3) = 23, 
$$
and
$$
g(0) = 121, \ g(1) = 31, \ g(2) = 13, \ g(3) = 4.
$$
Then \ ${\mathbf d'} = g(f^{\infty}(0))$.  That is, $\displaystyle{\mathbf d'} = \lim_{n \to \infty} g(f^{(n)}(0)$.
\end{theorem}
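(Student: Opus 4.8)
The plan is to exploit the self-similar description of the set $A = \{a_0, a_1, a_2, \ldots\}$ of values of $(a_n)$. Writing $A^+ = A \setminus \{0\}$ and recalling from the proof of Theorem~\ref{conj2} that $\lambda_n = (1+j_n)/2$ is the characteristic function of $A$ for $n \geq 1$, the defining relations $j_{2k} = j_k$ and $j_{2k+1} = (-1)^k$ give the recursions $\lambda_{2m} = \lambda_m$ (for $m \geq 1$) and $\lambda_{2k+1} = 1$ if $k$ is even, $0$ if $k$ is odd. From these I would record, for each $k \geq 0$, the membership of $4k+1, 4k+2, 4k+3, 4k+4$ in $A$: one always has $4k+1 \in A^+$ and $4k+3 \notin A^+$, while $4k+2 = 2(2k+1) \in A^+ \Leftrightarrow k$ is even and $4k+4 = 2(2k+2) \in A^+ \Leftrightarrow \lambda_{k+1} = 1$.

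The central observation is that the numbers $4k+1$ ($k \geq 0$) are all in $A^+$ and they cut $A^+$ into consecutive blocks $B_k := A^+ \cap [4k+1, 4(k+1)+1)$. Since $a_1 = 1 = 4\cdot 0 + 1$, these blocks tile $A^+$ exactly, so the shifted difference sequence $\mathbf{d'} = (d_{n+1})_{n \geq 0}$ is the concatenation, over $k \geq 0$, of the finite word of successive gaps inside $B_k$ (the last gap of $B_k$ being the jump from the largest element of $B_k$ up to $4(k+1)+1$). By the membership facts above, this word depends only on the parity of $k$ and the value $\lambda_{k+1}$, and the four cases produce exactly the words $121$, $31$, $13$, $4$. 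I would therefore define $c_k \in \{0,1,2,3\}$ to be the letter whose $g$-image is this word: $c_k = 0, 1, 2, 3$ according as $(\text{parity of }k,\ \lambda_{k+1})$ equals (even, $1$), (odd, $1$), (even, $0$), (odd, $0$). With this definition one has, by construction, $\mathbf{d'} = g((c_k)_{k \geq 0})$.

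It then remains to prove $(c_k)_{k \geq 0} = f^\infty(0)$. Since $f$ is $2$-uniform, this reduces to checking the initial value $c_0 = 0$ (immediate, as $0$ is even and $\lambda_1 = 1$) together with the block recursion $(c_{2i}, c_{2i+1}) = f(c_i)$ for all $i \geq 0$. This last identity follows from the two special values $\lambda_{2i+1} = 1 \Leftrightarrow i$ even and $\lambda_{2i+2} = \lambda_{i+1}$: since $2i$ is even, $c_{2i} \in \{0,2\}$ and equals $0$ iff $i$ is even; since $2i+1$ is odd, $c_{2i+1} \in \{1,3\}$ and equals $1$ iff $\lambda_{i+1} = 1$. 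Feeding in the four possibilities for $c_i$ then reproduces $f(0) = 01$, $f(1) = 21$, $f(2) = 03$, $f(3) = 23$ on the nose. Combining the two halves yields $\mathbf{d'} = g(f^\infty(0))$.

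I expect the only genuine work to be bookkeeping: correctly identifying the four local patterns of $A^+$ on $[4k+1, 4k+4]$ and verifying that their gap-words are precisely the $g$-images. The conceptual crux, and the step most likely to hide an error, is the claim that the blocks $B_k$ tile $\mathbf{d'}$ starting exactly at $a_1$ — this is why the statement uses the shift $\mathbf{d'}$ rather than $\mathbf{d}$, the term $d_0 = a_1 - a_0 = 1$ sitting outside the block structure. Once the tiling and the labeling $c_k$ are pinned down, both morphism identities are forced by the elementary recursions for $\lambda$.
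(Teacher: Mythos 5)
Your proof is correct, and it takes a genuinely different route from the paper's. The paper derives the morphisms from the ``perturbed symmetry'' recursion for the paperfolding prefixes $X_{k+1} = X_k\,1\,\overline{X_k^R}$, which forces it to track \emph{two} families of words (gaps between $1$'s and gaps between $0$'s) related by reversal, and to push through an induction on four coupled identities such as $A_k = g(f^{(k)}(0))\,B_{k-1}^R$. You instead read the $2$-uniform structure directly off the arithmetic self-similarity of the set $A$: every residue class $4k+1$ is in $A$, $4k+3$ never is, and membership of $4k+2$ and $4k+4$ is governed by the parity of $k$ and by $\lambda_{k+1}$, so the gap-word of each block $[4k+1,4k+5)$ is one of $121, 31, 13, 4$ and the coding letter $c_k$ is an explicit function of $(k \bmod 2,\ \lambda_{k+1})$. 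The block recursion $(c_{2i},c_{2i+1}) = f(c_i)$ then drops out of $\lambda_{2i+1} = [i \text{ even}]$ and $\lambda_{2i+2} = \lambda_{i+1}$, and I have checked all four cases: they do reproduce $f$ exactly, and the initial values agree ($c_0c_1c_2c_3 = 0121 = f^{(2)}(0)$, $g(0121) = 121\,31\,13\,31$ matching the differences of $1,2,4,5,8,9,10,13,16,17$). Your approach buys transparency and brevity --- it explains \emph{why} $f$ is $2$-uniform and makes the coding $g$ canonical rather than guessed --- at the cost of not exhibiting the reversal symmetry $B_{k+1} = B_k\,4\,A_k^R$ that the paper's prefix-based construction makes visible (and which is what one would generalize to the gaps between consecutive $0$'s, i.e., to the sequence $(b_n)$). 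The one point you flag as delicate, the alignment of the block tiling with the shift from $\mathbf{d}$ to $\mathbf{d'}$, is handled correctly: the tiling starts at $a_1 = 1$, so it produces $(a_{n+2}-a_{n+1})_{n\geq 0} = \mathbf{d'}$, with $d_0 = a_1 - a_0$ left outside, exactly as the statement requires.
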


\begin{proof}
We have that ${\mathbf d}$ is morphic if and only if ${\mathbf d'}$ is. The characteristic sequence of
$(a_n)_{n \geq 0}$ is the paperfolding sequence $(p_n)_{n \geq 0} = (j_{n+1})_{n \geq 0}$ (with the 
notation above). We recall that the paperfolding sequence can be defined using ``perturbed 
symmetry'' as follows: let $X_k = p_0 p_1 \cdots p_k$ be its prefix of length $k+1$. Then $X_0 = 1$ 
and for all $k \geq 0$, $X_{k+1} = X_k \ 1 \ \overline{X_k^R}$, where $W^R$ is the word obtained from 
$W$ by writing it backwards, and $\overline{W}$ is the word obtained from $W$ by replacing $0$'s 
with $1$'s and $1$'s with $0$'s (see, e.g., \cite{mendes1} or \cite{mendes2}). Now let us define $U_k$ 
(resp., $V_k$) to be the word of distances between consecutive $1$'s (resp., consecutive $0$'s) in 
$X_k$. For example
$$
\begin{array}{llll}
&X_0 = 1               &U_0 = \epsilon      &V_0 = \epsilon \\
&X_1 = 110           &U_1 = 0                    &V_1 = \epsilon \\
&X_2 = 1101100   &U_2 = 010                &V_2 = 20 \\
&X_3 = 110110011100100 \ \ \ &U_3 = 0102002   \ \ &V_3 = 203010 \\
&\quad\quad \vdots & \quad\quad \vdots & \quad\quad \vdots
\end{array}
$$
The perturbed symmetry definition of the $X_k$'s, and the fact that, for $k \geq 2$, $X_k$ begins with
$11$ and ends with $00$, show that
$$
U_{k+1} = U_k \ 2 \ 0 \ V_k^R \ \ \mbox{\rm and} \ \ V_{k+1} = V_k \ 3 \ U_k^R.
$$
The sequence of words $(U_k)_k$ (resp., $(V_k)_k$) clearly converges to an infinite sequence $U$ 
(resp., $V$) with values in $\{0, 1, 2, 3\}$. It is straightforward to see that the difference between the
indexes of the $1$'s (resp., the $0$'s) in the sequence $(p_n)_{n \geq 0}$ are obtained by adding 
$1$ to the terms of the sequence $U$ (resp., the sequence $W$). Thus, the difference of 
indexes of consecutive $1$'s (resp., consecutive $0$'s) in $(p_n)_{n \geq 0}$ (which is the 
difference sequence of $(a_{n+1})_{n \geq 0}$) is given by the sequence $A$ 
(resp., $B$) on 
$\{1, 2, 3, 4\}$ which is the limit of the sequence of words $(A_k)_k$ 
(resp., $(B_k)_k$) defined, 
for $k \geq 0$, as follows:
$$
A_0 =  1 \ 2 \ 1, \ B_0 = 3 \ 1, \ \mbox{\rm and, for all $k \geq 0$,} \ 
A_{k+1} = A_k \ 3 \ 1 \ B_k^R \ \ \mbox{\rm and} \ \ B_{k+1} = B_k \ 4 \ A_k^R.
$$
If we prove that, for all $k \geq 2$, one has
\begin{equation}
g(f^{(k)}(0)) = A_k
\label{twostar}
\end{equation} we obtain, by letting $k$ 
tend to infinity, that $A = g(f^{\infty}(0))$. To obtain \eqref{twostar} we prove by induction on $k \geq 1$ that
\begin{equation}
A_k = g(f^{(k)}(0)) \ B_{k-1}^R, \ \ \  g(f^{(k)}(1)) = B_{k-1}^R \ 3 \ 1, \ \ \
g(f^{(k)}(2)) = A_{k-1} \ 4, \ \ \ g(f^{(k)}(3)) = B_{k-1}^R \ 4. 
\label{threestar}
\end{equation}
First we check \eqref{threestar} for $k=1$:
$$
\begin{array}{ll}
&g(f(0)) \ B_0^R = g(01) \ 1 \ 3 = g(0) \ g(1) \ 1 \ 3 = 1 \ 2 \ 1 \ 3 \ 1 \ 1 \ 3 = 
A_0 \ 3 \ 1 \ B_0^R  =  A_1 \\
&g(f(1)) = g(2 \ 1) = 1 \ 3 \ 3 \ 1 = B_0^R \ 3 \ 1 \\
&g(f(2)) = g(0 \ 3) = 1 \ 2 \ 1 \ 4 = A_0 \ 4 \\
&g(f(3)) = g(2 \ 3) = 1 \ 3 \ 4 = B_0^R \ 4. \\
\end{array}
$$
Now suppose that \eqref{threestar} holds for $k \geq 1$. Then
$$
\begin{aligned}
g(f^{(k+1)}(0)) \ B_k^R &= g(f^{(k)}(f(0)) \ B_k^R = g(f^{(k)}(0 \ 1)) \ B_k^R = 
g(f^{(k)}(0))  \ g(f^{(k)}(1))  \ B_k^R
\\
&= g(f^{(k)}(0)) \ B_{k-1}^R  \ 3  \ 1  \ B_k^R = A_k  \ 3 \ 1 \ B_k^R = A_{k+1};
\end{aligned}
$$
$$
\begin{aligned}
g(f^{(k+1)}(1)) &= g(f^{(k)}(f(1)) = g(f^{(k)}(2 \ 1)) = g(f^{(k)}(2)) \ g(f^{(k)}(1)) = 
A_{k-1} \ 4 \ B_{k-1}^R \ 3 \ 1
\\
&= (B_{k-1} \ 4 \ A_{k-1}^R)^R \ 3 \ 1 = B_k^R \ 3 \ 1;
\end{aligned}
$$
$$
\begin{aligned}
g(f^{(k+1)}(2)) &= g(f^{(k)}(f(2)) = g(f^{(k)}(0 \ 3)) = g(f^{(k)}(0)) \ g(f^{(k)}(3)) = 
g(f^{(k)}(0)) \ B_{k-1}^R \ 4 
\\
&= A_k \ 4;
\end{aligned}
$$
$$
\begin{aligned}
g(f^{(k+1)}(3)) &= g(f^{(k)}(f(3)) = g(f^{(k)}(2 \ 3)) = g(f^{(k)}(2)) \ g(f^{(k)}(3)) = 
g(f^{(k)}(2)) \ B_{k-1}^R \ 4 
\\
&= A_{k-1} \ 4 \ B_{k-1}^R \ 4 = (B_{k-1} \ 4 \ A_{k-1}^R)^R \ 4 = B_k^R \ 4.
\end{aligned}
$$
\end{proof}

\begin{remark}
What we have proved in Theorem~\ref{morphic} above is that the sequence
${\mathbf d'}$ satisfies ${\mathbf d'} = g(f^{\infty}(0))$. This is not
exactly the definition of a morphic sequence, because $g$ is not a 
{\em coding}, i.e., a pointwise map, but a morphism. This is known 
to be equivalent to saying that ${\mathbf d'}$ is morphic 
(see, e.g., \cite{AS}). Note that it might be much easier 
to discover and prove that a sequence ${\mathbf u}$ is equal to, say, 
$\beta(\alpha^{\infty}(0))$ where $\beta$ and $\alpha$ are two morphisms, 
than to exhibit a morphism $\alpha_1$ and a coding $\varphi$ such that 
${\mathbf u} = \varphi(\alpha_1^{\infty}(0))$. In the first construction
the morphism $\beta$ is called in \cite{dekking} a {\em decoration} of the
fixed point $\alpha^{\infty}(0))$.
\end{remark}

Our last theorem proves that the sequence $(d_n)_{n \geq 0}$ is 
not $q$-automatic for any $q \geq 2$, thus finishing the proof of 
Theorem~\ref{not-reg}.

\begin{theorem}
The sequence $(d_n)_{n \geq 0}$ is not $q$-automatic for any $q \geq 2$.
\end{theorem}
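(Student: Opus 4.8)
The plan is to combine the two facts already established---that $\mathbf d$ (equivalently its shift $\mathbf d' = (d_{n+1})_{n\geq 0}$) is morphic (Theorem~\ref{morphic}) and that it is not $2$-automatic (Theorem~\ref{not-automatic})---with Cobham's theorem and its extension to substitutive sequences. Since a sequence is $q$-automatic, resp.\ eventually periodic, if and only if its shift by one position is, I may pass freely between $\mathbf d$ and $\mathbf d'$. First I would record that $\mathbf d$ cannot be eventually periodic: every eventually periodic sequence is $k$-automatic for every $k\geq 2$, so an eventually periodic $\mathbf d$ would in particular be $2$-automatic, contradicting Theorem~\ref{not-automatic}. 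This single observation will also dispose of the hardest case below, by turning ``eventually periodic'' into an immediate contradiction.

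Next I would pin down the substitutive type of $\mathbf d'$. By Theorem~\ref{morphic} we have $\mathbf d' = g(f^{\infty}(0))$, where $f$ is the $2$-uniform morphism $f(0)=01$, $f(1)=21$, $f(2)=03$, $f(3)=23$; its incidence matrix has Perron eigenvalue $2$, so $f^{\infty}(0)$ is $2$-automatic. The outer map $g$ is a morphism rather than a coding, but it is non-erasing, since $|g(c)|\geq 1$ for every letter $c$. A non-erasing morphic image of an $\alpha$-substitutive sequence is again $\alpha$-substitutive, with the \emph{same} dominant eigenvalue $\alpha$; this is exactly the ``decoration'' passage mentioned in the Remark after Theorem~\ref{morphic} (see also \cite{AS}). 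Applying this with $\alpha = 2$ shows that $\mathbf d'$ is $2$-substitutive, i.e., it is the image under a coding of a fixed point of a substitution whose incidence matrix has dominant eigenvalue $2$.

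Finally, suppose for contradiction that $\mathbf d$ is $q$-automatic for some $q\geq 2$, and split into two cases. If $q$ is a power of $2$, then $q$-automatic is equivalent to $2$-automatic, contradicting Theorem~\ref{not-automatic} at once. Otherwise $q$ is not a power of $2$, so $q$ and $2$ are multiplicatively independent. A $q$-automatic sequence is $q$-substitutive (it is a coding of a fixed point of a $q$-uniform morphism, of dominant eigenvalue $q$), while we have just shown $\mathbf d'$ is $2$-substitutive. By the extension of Cobham's theorem to substitutive sequences (Durand), a sequence that is simultaneously $\alpha$-substitutive and $\beta$-substitutive for multiplicatively independent Perron numbers $\alpha,\beta$ must be eventually periodic. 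Hence $\mathbf d$ would be eventually periodic, contradicting the first paragraph. This shows $\mathbf d$ is not $q$-automatic for any $q\geq 2$ and, together with Theorem~\ref{not-reg}, finishes the argument. The main obstacle is the second paragraph: one must check carefully that replacing the morphism $g$ by a genuine coding (the decoration step) does not change the dominant eigenvalue, so that the cleanly stated substitutive form of Cobham's theorem applies; once that is in hand, the case split and the appeal to non-periodicity are routine.
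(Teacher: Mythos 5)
Your proposal is correct and follows essentially the same route as the paper: both establish that $\mathbf d'$ is substitutive with respect to (a power of) $2$ via Theorem~\ref{morphic}, invoke Durand's generalization of Cobham's theorem to conclude that $q$-automaticity would force either $q$ to be a power of $2$ or the sequence to be ultimately periodic, and rule out both branches using Theorem~\ref{not-automatic}. The only cosmetic difference is that you justify the $2$-substitutivity by the non-erasing decoration argument, whereas the paper cites Durand's Corollary~6 directly to get $2^k$-substitutivity for some $k\geq 1$; either suffices.
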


\begin{proof}
We use a deep result of Durand \cite{durand} who widely generalized 
Cobham's theorem. Namely, from \cite[Corollary~6]{durand}, the sequence 
${\mathbf d'}$ is $2^k$-substitutive for some integer $k \geq 1$, since 
$f$ is a $2$-uniform morphism. So, if it were $q$-automatic for some 
$q \geq 2$, then either $q$ would be a power of $2$, or the sequence would
be ultimately periodic \cite[Theorem~1]{durand}. But both possibility
are ruled out by the fact that ${\mathbf d'}$ is not $2$-automatic from
Theorem~\ref{not-automatic}, and hence is neither $2^{\ell}$-automatic for any 
$\ell \geq 1$, nor ultimately periodic.
\end{proof}

\begin{remark}
The sequence $(b_n)_{n \geq 0}$ can be studied in a way similar to the
study of the sequence $(a_n)_{n \geq 0}$.
\end{remark}

\begin{remark}
It was already known that a sequence whose characteristic function is automatic is not necessarily 
regular. For example, Cateland \cite{cateland} studied the expansions of integers in base $q$ with 
digits in $\{d, d+1, \ldots, d+q-1\}$, for some $d \in [2 - q, 0]$. Using his results about integers that 
miss some digit(s) \cite[p.~90--105]{cateland}, one has the following:
{\it 
\begin{itemize}

\item Let $\{0, 1, 3, 4, 9, 10, 12, 13, \ldots\}$ be the increasing sequence of integers whose base-$3$ representation contains no $2$ (sequence \seqnum{A005836} in \cite{oeis}). Then the characteristic function
of the values of this sequence is $3$-automatic, while the sequence itself is $2$-regular, satisfying
$z_{2n} = 3 z_n$ and $z_{2n+1} = 3z_n + 1$.

\item Let $2, 8, 26, \ldots$ be the increasing sequence of integers whose base-$3$ expansion has 
all digits equal to $2$ (this is the increasing sequence $(3^n - 1)_{n \geq 0}$). The characteristic 
function of this sequence is $3$-automatic, while the sequence itself is not $r$-regular for all
$r \geq 2$ (note that it is the intersection of two $2$-regular sequences).

\end{itemize}
}
\end{remark}

What precedes leads to a general question.

\begin{question}
Let $(c_n)_{n \geq 0}$ be an increasing sequence of integers. Let 
$(\lambda_n)_{n \geq 0}$ be the characteristic sequence of the set 
$\{c_0, c_1, \ldots, c_n, \ldots\}$. Give a closed form or an asymptotic 
formula for $(c_n)$. Furthermore, if $(\lambda_n)_{n \geq 0}$ has some 
sort of regularity, does $(c_n)_{n \geq 0}$ inherit a ``similar'' 
regularity? 
In particular, if $(\lambda_n)_{n \geq 0}$ is a {\it $q$-automatic 
sequence}, when is it true that the sequence $(c_n)_{n \geq 0}$ is 
{\it $\ell$-regular} for some $\ell \geq 2$ (where, possibly, 
$\ell \neq q$)? 
\end{question}


\section{The Hankel determinants of $1-xr(x)$} 
\label{han1}
Let
\begin{equation}\label{eq:Bx}
	B(x)=1-xr(x)=1-\sum_{k\geq 0} x^{2^k} = 1-x-x^2-x^4-x^8-x^{16}-\cdots
\end{equation}
The first terms of the Hankel determinants $B(x)$ are
$$
H(B(x))= (1, 1, -2, 3, 2, -3, 4, 3, 2, -3, 4, -5, -4, -3, 4, 3, 2,\ldots)
$$

Consider the sequence $(-r_1, -r_2, -r_3, \ldots)$ obtained from $B(x)$ by shifting two times, i.e.,
$$
T(x)=\frac{B(x)-(1-x)}{x^2}=\frac{1-r}{x}.
$$
It is also the negative shifted Rueppel sequence.  Let $g_n=H_n(T(x))$. We establish
the following characterization of $g_n$.

\begin{lemma}\label{th:g:g}
	We have $g_0=1, g_1=-1$, and
	\begin{equation}\label{eq:g:g}	
	g_n=(-1)^{n+1} g_{2^{k+1}-n-1},
	\end{equation}	
	where $2^k<n+1\leq 2^{k+1}$.
\end{lemma}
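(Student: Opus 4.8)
The plan is to exploit the self-similar, sparse structure of $T$. First I would record the identity $T(x) = (1-r(x))/x = -r(x^2)$, which follows at once from $1 - r(x) = -x\,r(x^2)$; equivalently, the coefficient sequence $(t_m)$ of $T$ satisfies $t_{2j} = -r_j$ and $t_{2j+1} = 0$, and in fact $t_m = -r_{m+1}$ for every $m$. The vanishing of $T$ on all odd indices is the crucial feature: in the $n\times n$ Hankel matrix $M_n = (t_{i+j})_{0\le i,j\le n-1}$ the entry is $0$ whenever $i$ and $j$ have opposite parity. Reordering the indices $0,1,\dots,n-1$ so that the even ones come first, and applying the resulting permutation simultaneously to rows and columns (which leaves the determinant unchanged), turns $M_n$ into a block-diagonal matrix $\mathrm{diag}(E_n,O_n)$.

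Next I would identify the two blocks. The even--even block $E_n$ has entries $t_{2(a+b)} = -r_{a+b}$ for $0\le a,b<\lceil n/2\rceil$, so $E_n = -R_{\lceil n/2\rceil}$, where $R_s$ denotes the $s\times s$ Hankel matrix of the Rueppel sequence; hence $\det E_n = (-1)^{\lceil n/2\rceil} H_{\lceil n/2\rceil}(r)$. The odd--odd block $O_n$ has entries $t_{(2a+1)+(2b+1)} = t_{a+b}$ (using $t_{a+b} = -r_{a+b+1}$), so $O_n = M_{\lfloor n/2\rfloor}$ is itself the Hankel matrix of $T$, giving $\det O_n = g_{\lfloor n/2\rfloor}$. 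This yields the key recurrence
\[
g_n = (-1)^{\lceil n/2\rceil}\,H_{\lceil n/2\rceil}(r)\,g_{\lfloor n/2\rfloor},
\]
which reduces everything to the Hankel determinants of the Rueppel sequence itself.

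The remaining ingredient is the evaluation $H_m(r) = (-1)^{\lfloor m/2\rfloor}$. I would prove this by the same even/odd splitting applied to $R_m$, now using $r(x) = 1 + x\,r(x^2)$, i.e. $r_{2j+1}=r_j$ and $r_{2j}=0$ for $j\ge 1$, with the single exception $r_0=1$. Here the odd antidiagonals carry the mass, so $R_m$ takes a block form with a zero lower-right block; for even $m$ a block swap gives $\det R_m = (-1)^{m/2}(H_{m/2}(r))^2 = (-1)^{m/2}$, independently of the corner. The odd-$m$ case is the one place where the lone corner entry $r_0$ matters and must be handled by a short Laplace expansion, since the clean block-triangular reduction is unavailable when the two index classes have different sizes. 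I expect this to be the main technical obstacle.

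Finally I would deduce the stated reflection formula by strong induction on $n$. For $n\ge 2$, write $m = 2^{k+1}-1-n \in [0,2^k-1]$, so that exactly one of $n,m$ is even; applying the recurrence above to both $g_n$ and $g_m$, substituting $H_\bullet(r) = (-1)^{\lfloor \bullet/2\rfloor}$, and invoking the induction hypothesis on the half-indices $\lfloor n/2\rfloor$ and $\lfloor m/2\rfloor$ (which form a reflection pair one level down, or are the base values $g_0=1$, $g_1=-1$), the claim $g_n = (-1)^{n+1}g_m$ reduces to an elementary parity identity in $\nu=\lfloor n/2\rfloor$ and $\mu=\lfloor m/2\rfloor$ subject to $\nu+\mu = 2^k-1$. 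The base cases $g_0=1$, $g_1=-1$, together with the block $k=1$ (i.e. $n\in\{2,3\}$, checked directly from the recurrence), start the induction. The bookkeeping of the signs $(-1)^{\lceil n/2\rceil}$ against $(-1)^{n+1}$ across the two parity cases, combined with the odd-size Rueppel determinant, is where essentially all the work lies.
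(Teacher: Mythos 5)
Your plan is correct, but it reaches the key recurrence by a genuinely different route than the paper. The paper simply quotes Barry's Proposition~4 for
\begin{equation*}
g_{2n} = (-1)^{n(n+1)/2} g_n, \qquad g_{2n+1} = (-1)^{(n+1)(n+2)/2} g_n,
\end{equation*}
and then runs the same even/odd induction on the reflection identity that you describe in your last step; all of the content of the paper's proof is in that induction. You instead derive the halving recurrence from scratch via the parity block decomposition of the Hankel matrix of $T(x)=-r(x^2)$, obtaining $g_n = (-1)^{\lceil n/2\rceil} H_{\lceil n/2\rceil}(r)\, g_{\lfloor n/2\rfloor}$; together with $H_m(r)=(-1)^{\lfloor m/2\rfloor}$ this is equivalent to the displayed recurrence, since $\nu+\lfloor \nu/2\rfloor \equiv \nu(\nu+1)/2 \pmod 2$. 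What your approach buys is self-containedness (no appeal to Barry's Proposition~4) and a transparent structural explanation of where the signs come from; what it costs is the extra evaluation of the Rueppel Hankel determinants. One subtlety you should make explicit there: in the odd case $m=2s+1$, the Laplace expansion forces the lone entry $r_0$ in the even--even block and then yields $\pm(\det C')^2$ where $C'=(r_{a+b+1})_{0\le a,b<s}$ is the Hankel matrix of the \emph{shifted} Rueppel sequence, i.e.\ $\det C'=\pm g_s$, not $H_s(r)$. So the claim $H_m(r)=(-1)^{\lfloor m/2\rfloor}$ is not proved by induction on the $H_\bullet(r)$ alone; you need a joint induction carrying $|g_s|=1$ and $|H_s(r)|=1$ simultaneously (which your recurrence happily supplies). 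This closes up without difficulty, but as written it is the one genuinely unfinished step in the outline.
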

\begin{proof}
First, Proposition~4 in \cite{barry}
implies
	that $g_0=1$, and
\begin{equation}\label{eq:g:prop4}
	g_{2n} = (-1)^{n(n+1)/2} g_n,
	\qquad
	g_{2n+1} = (-1)^{(n+1)(n+2)/2} g_n.
\end{equation}
	Next, we prove \eqref{eq:g:g} by induction on $n$ by using \eqref{eq:g:prop4}. 
	We can verify \eqref{eq:g:g} is true for $n=2,3$.
	Now, suppose that  \eqref{eq:g:g} is true for $n\leq 2m-1$ (with $m\geq 2$).
	We consider two cases.

\bigskip

\noindent {(i)} The case $n$ even, $n=2m$:
we need to prove that
\begin{equation}\label{eq:g:need1}
	g_{2m} = - g_{2^{k+1} - 2m-1}, \qquad \text{with \quad} 2^k<2m+1\leq 2^{k+1}.
\end{equation}
Since
	$2^k<2m+1\leq 2^{k+1}$ is equivalent to 
$2^k<2m+2\leq 2^{k+1}$
or
	$2^{k-1}<m+1\leq 2^{k}$,
	by the induction hypothesis we have
	$$
	g_m=(-1)^{m+1}g_{2^k-m-1}.
	$$
	By \eqref{eq:g:prop4}, the left hand side of \eqref{eq:g:need1} is equal to
	$$
	g_{2m}=	(-1)^{m(m+1)/2} g_m
	=
	(-1)^{m(m+1)/2 +m+1} g_{2^k-m-1}.
	$$
	Since $k\geq 2$,
	the right hand side of \eqref{eq:g:need1} is equal to
	$$
	-g_{2^{k+1}-2m-1}
	=
	(-1)^{(2^k-m)(2^k-m+1)/2+1} g_{2^k-m-1}
	=
	(-1)^{m(m-1)/2+1} g_{2^k-m-1}.
	$$
	Hence \eqref{eq:g:need1} is true.

\bigskip
	
\noindent {(ii)} The case $n$ odd, $n=2m+1$: 	we need to prove that
\begin{equation}\label{eq:g:need2}
	g_{2m+1} =  g_{2^{k+1} - 2m-2}, \qquad \text{with \quad} 2^k<2m+2\leq 2^{k+1}.
\end{equation}
Since
	$2^k<2m+2\leq 2^{k+1}$ is equivalent to 
	$2^{k-1}<m+1\leq 2^{k}$,
	by the induction hypothesis we have
	$$
	g_m=(-1)^{m+1}g_{2^k-m-1}.
	$$
	By \eqref{eq:g:prop4}, the two sides of \eqref{eq:g:need2} are equal to
	$$
	g_{2m+1}=	(-1)^{(m+1)(m+2)/2} g_m
	=
	(-1)^{(m+1)(m+2)/2 +m+1} g_{2^k-m-1},
	$$
	$$
	g_{2^{k+1}-2m-2}
	=
	(-1)^{(2^k-m-1)(2^k-m)/2} g_{2^k-m-1}
	=
	(-1)^{m(m+1)/2} g_{2^k-m-1}.
	$$
	Hence \eqref{eq:g:need2} is true.
\end{proof}

\begin{notation}
We recall the notation $\sign(y)$:
$$
\sign(y) =
\begin{cases}
+1 &\text{if $y \geq 0$} \\
-1 &\text{if $y <0$}. \\
\end{cases}
$$
\end{notation}

\bigskip

Let $h_n=H_n(B(x))$.
\begin{theorem}\label{th:main:h}
	We have $h_0=h_1=1,h_2=-2$, and for each $n\geq 3$,
\begin{equation}\label{eq:hn}
	h_n= (-1)^{n} (h_m + g_{m-1}),
\end{equation}
	where $2^k< n\leq 2^{k+1}$ and  $ m=2^{k+1}-n+1$.
\end{theorem}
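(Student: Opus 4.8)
The plan is to argue by strong induction on $n$, exploiting two structural facts. First, the identity $B(x)=(1-x)+x^2T(x)$ underlying \eqref{eq:Bx} shows that the lower-right $(n-1)\times(n-1)$ submatrix of the Hankel matrix $H_n(B)$ is exactly $H_{n-1}(T)$, whose determinant is $g_{n-1}$; this is the bridge that lets the quantities $g_\bullet$, already controlled by Lemma~\ref{th:g:g}, enter the computation. Second, $B$ is lacunary: its coefficients vanish except for the constant term $1$ and the entries $-1$ sitting at the positions $2^0,2^1,2^2,\dots$. This sparsity is what forces the reflected index $m=2^{k+1}-n+1$ to appear.

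The key computational device is a cofactor expansion placed at the reflected position. Fix $n$ with $2^k<n\le 2^{k+1}$ and consider the last row of $H_n(B)$, whose entries are $b_{n-1+j}$ for $0\le j\le n-1$. When $2^k+1<n\le 2^{k+1}$, the only power of $2$ in the index range $[n-1,2n-2]$ is $2^{k+1}$, and it occurs exactly at column $j=2^{k+1}-(n-1)=m$; hence the last row is the single vector $-e_m$. Since $H_n(B)$ is symmetric, the last column is likewise $-e_m$ (as a column). Expanding the determinant first along the last row and then along the surviving original last column deletes rows $\{n-1,m\}$ and columns $\{m,n-1\}$, and a short sign computation collapses both cofactor signs to give $h_n=-\det N$, where $N$ is the symmetric principal submatrix of $H_{n-1}(B)$ obtained by removing row and column $m$. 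Consequently the factor $(-1)^n$ predicted by \eqref{eq:hn} is \emph{not} produced by these expansions; it must emerge from the evaluation of $\det N$, and this is precisely where the sign $(-1)^{n+1}$ of the $g$-recursion in Lemma~\ref{th:g:g} will be consumed.

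It then remains to show $\det N=(-1)^{n+1}(h_m+g_{m-1})$. The leading $m\times m$ principal block of $N$ is the genuine Hankel matrix $H_m(B)$, and this is the origin of the term $h_m$. The complementary structure of $N$, whose lower blocks are Hankel matrices of $T$ (shifted), is what should contribute $g_{m-1}$ after the reflection identity $g_j=(-1)^{j+1}g_{2^{k+1}-j-1}$ is applied. I would produce the \emph{additive} split into $h_m+g_{m-1}$ by expanding $N$ using multilinearity in the one column that distinguishes $N$ from a true Hankel matrix (splitting it into a ``$B$-part'' and a ``$T$-part''), then invoking the induction hypothesis at the strictly smaller index $m\le 2^k<n$ for the first piece and Lemma~\ref{th:g:g} for the second. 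The base cases $h_0=h_1=1$, $h_2=-2$, and $n=3$ are checked by hand, and the boundary case $n=2^k+1$ (where $m=2^k=n-1$ and the last row has two nonzero entries, at columns $0$ and $n-1$, so the single-entry expansion must be replaced by a two-term one) is treated separately.

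The main obstacle I anticipate is exactly this evaluation of $\det N$. The off-diagonal blocks of $N$ do not vanish, so the appearance of $h_m+g_{m-1}$ is genuinely additive rather than a block-triangularization, and one must check (for the small case $n=6$, $m=3$ one already sees $\det N=-(h_3+g_2)$ with the complementary block of size $n-2-m$ not matching the size of $g_{m-1}$) that the cross terms reorganize into precisely $g_{m-1}$ and nothing more. Alongside this, the second delicate point is the global sign: one must reconcile the reversal/permutation signs arising when the surviving rows and columns of $N$ are reordered with the built-in factor $(-1)^{j+1}$ of the $g$-recursion, and verify that together they deliver the single clean factor $(-1)^{n}$ of \eqref{eq:hn}.
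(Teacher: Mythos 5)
Your opening reduction is correct: for $2^k+1<n\le 2^{k+1}$ the last row and last column of $H_n(B)$ are indeed both equal to $-e_m$, and the two successive cofactor expansions combine (the signs $(-1)^{(n-1)+m}$ and $(-1)^{m+(n-2)}$ together with the two factors $-1$ from the entries multiply to $-1$) to give $h_n=-\det N$, where $N$ is the principal submatrix of $H_{n-1}(B)$ with row and column $m$ deleted. But the argument stops exactly where the theorem begins: you never establish $\det N=(-1)^{n+1}(h_m+g_{m-1})$, and you yourself flag this as ``the main obstacle.'' The sketch you offer --- multilinearity in ``the one column that distinguishes $N$ from a true Hankel matrix'' --- cannot work as stated: $N$ differs from a Hankel matrix in more than one row and column, and, as you already observed, the complementary block has size $n-2-m=2n-2^{k+1}-3$, which is in general not $m-1$ (e.g., $n=6$, $m=3$ gives size $1\ne 2$), so no single-column split can produce a term equal to $g_{m-1}=H_{m-1}(T)$ of the right order. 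The boundary case $n=2^k+1$, where the last row has two nonzero entries, is likewise only announced, not treated. Since the additive identity $h_m+g_{m-1}$ is the entire content of the statement, the proof is incomplete.

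For what it is worth, the paper does not peel rows one at a time; it evaluates the full $n\times n$ determinant in one stroke as a two-term expansion dictated by the sparsity of $B$ (this is what Figure~\ref{fig:det} encodes). The antidiagonal $i+j=2^{k+1}$ of $-1$'s occupies rows and columns $m,\dots,n-1$ and, paired with the leading $m\times m$ block $H_m(B)$, contributes $(-1)^{(n-m)(n-m+1)/2}h_m$; a second admissible pattern uses the entries at $(0,2^k)$ and $(2^k,0)$ on the antidiagonal $i+j=2^k$ together with the remaining entries on $i+j=2^{k+1}$, leaving the block on rows and columns $1,\dots,m-1$, which is $H_{m-1}(T)$ and contributes $(-1)^{(n-m)(n-m-1)/2+1}g_{m-1}$; one must then check that every other selection of nonzero entries leaves a singular complementary minor. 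To complete your route you would have to carry out exactly this kind of two-term analysis on $\det N$ (or directly on $H_n(B)$), including that exhaustiveness check; as written, you have a correct first step followed by an honest description of the step that is missing.
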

\begin{proof}
Our proof is	by the fundamental properties of determinants. 
As illustrated in Figure~\ref{fig:det}
with the example $n=11, k=3, m=6$, we have
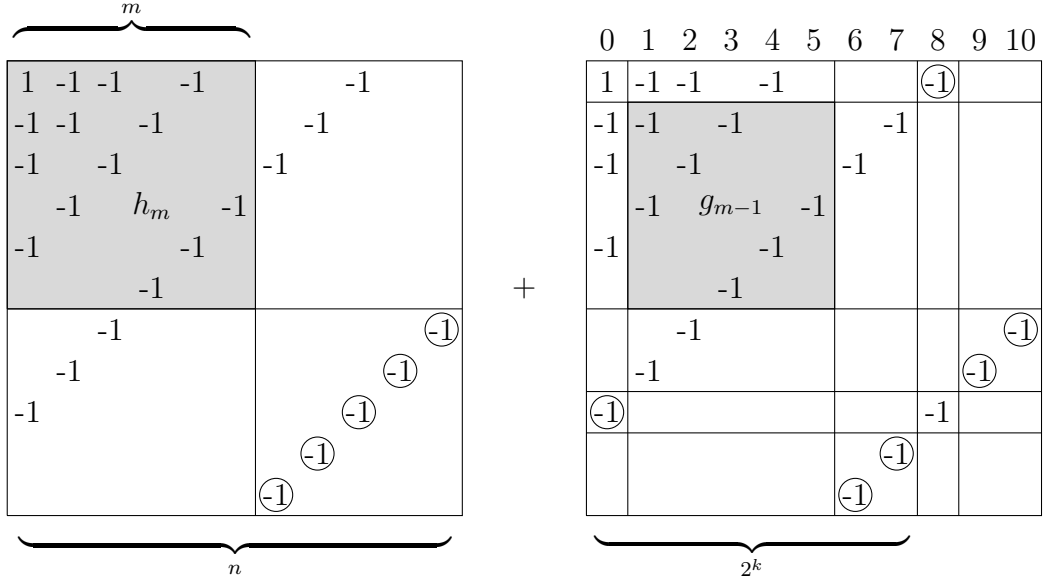
\begin{figure}[tbp]
\begin{center}
\begin{tikzpicture} 
\fill [gray!30, line width=0.4pt](0.0000,3.3000)--(3.3000,3.3000)--(3.3000,6.6000)--(0.0000,6.6000)--(0.0000,3.3000);
\draw [gray!200, line width=0.4pt](0.0000,3.3000)--(3.3000,3.3000)--(3.3000,6.6000)--(0.0000,6.6000)--(0.0000,3.3000);
\draw [gray!200] (0.2750, 6.3250) node [] {1}; 
\draw [gray!200] (0.8250, 6.3250) node [] {-1}; 
\draw [gray!200] (1.3750, 6.3250) node [] {-1}; 
\draw [gray!200] (1.9250, 6.3250) node [] {}; 
\draw [gray!200] (2.4750, 6.3250) node [] {-1}; 
\draw [gray!200] (3.0250, 6.3250) node [] {}; 
\draw [gray!200] (3.5750, 6.3250) node [] {}; 
\draw [gray!200] (4.1250, 6.3250) node [] {}; 
\draw [gray!200] (4.6750, 6.3250) node [] {-1}; 
\draw [gray!200] (5.2250, 6.3250) node [] {}; 
\draw [gray!200] (5.7750, 6.3250) node [] {}; 
\draw [gray!200] (0.2750, 5.7750) node [] {-1}; 
\draw [gray!200] (0.8250, 5.7750) node [] {-1}; 
\draw [gray!200] (1.3750, 5.7750) node [] {}; 
\draw [gray!200] (1.9250, 5.7750) node [] {-1}; 
\draw [gray!200] (2.4750, 5.7750) node [] {}; 
\draw [gray!200] (3.0250, 5.7750) node [] {}; 
\draw [gray!200] (3.5750, 5.7750) node [] {}; 
\draw [gray!200] (4.1250, 5.7750) node [] {-1}; 
\draw [gray!200] (4.6750, 5.7750) node [] {}; 
\draw [gray!200] (5.2250, 5.7750) node [] {}; 
\draw [gray!200] (5.7750, 5.7750) node [] {}; 
\draw [gray!200] (0.2750, 5.2250) node [] {-1}; 
\draw [gray!200] (0.8250, 5.2250) node [] {}; 
\draw [gray!200] (1.3750, 5.2250) node [] {-1}; 
\draw [gray!200] (1.9250, 5.2250) node [] {}; 
\draw [gray!200] (2.4750, 5.2250) node [] {}; 
\draw [gray!200] (3.0250, 5.2250) node [] {}; 
\draw [gray!200] (3.5750, 5.2250) node [] {-1}; 
\draw [gray!200] (4.1250, 5.2250) node [] {}; 
\draw [gray!200] (4.6750, 5.2250) node [] {}; 
\draw [gray!200] (5.2250, 5.2250) node [] {}; 
\draw [gray!200] (5.7750, 5.2250) node [] {}; 
\draw [gray!200] (0.2750, 4.6750) node [] {}; 
\draw [gray!200] (0.8250, 4.6750) node [] {-1}; 
\draw [gray!200] (1.3750, 4.6750) node [] {}; 
\draw [gray!200] (1.9250, 4.6750) node [] {}; 
\draw [gray!200] (2.4750, 4.6750) node [] {}; 
\draw [gray!200] (3.0250, 4.6750) node [] {-1}; 
\draw [gray!200] (3.5750, 4.6750) node [] {}; 
\draw [gray!200] (4.1250, 4.6750) node [] {}; 
\draw [gray!200] (4.6750, 4.6750) node [] {}; 
\draw [gray!200] (5.2250, 4.6750) node [] {}; 
\draw [gray!200] (5.7750, 4.6750) node [] {}; 
\draw [gray!200] (0.2750, 4.1250) node [] {-1}; 
\draw [gray!200] (0.8250, 4.1250) node [] {}; 
\draw [gray!200] (1.3750, 4.1250) node [] {}; 
\draw [gray!200] (1.9250, 4.1250) node [] {}; 
\draw [gray!200] (2.4750, 4.1250) node [] {-1}; 
\draw [gray!200] (3.0250, 4.1250) node [] {}; 
\draw [gray!200] (3.5750, 4.1250) node [] {}; 
\draw [gray!200] (4.1250, 4.1250) node [] {}; 
\draw [gray!200] (4.6750, 4.1250) node [] {}; 
\draw [gray!200] (5.2250, 4.1250) node [] {}; 
\draw [gray!200] (5.7750, 4.1250) node [] {}; 
\draw [gray!200] (0.2750, 3.5750) node [] {}; 
\draw [gray!200] (0.8250, 3.5750) node [] {}; 
\draw [gray!200] (1.3750, 3.5750) node [] {}; 
\draw [gray!200] (1.9250, 3.5750) node [] {-1}; 
\draw [gray!200] (2.4750, 3.5750) node [] {}; 
\draw [gray!200] (3.0250, 3.5750) node [] {}; 
\draw [gray!200] (3.5750, 3.5750) node [] {}; 
\draw [gray!200] (4.1250, 3.5750) node [] {}; 
\draw [gray!200] (4.6750, 3.5750) node [] {}; 
\draw [gray!200] (5.2250, 3.5750) node [] {}; 
\draw [gray!200] (5.7750, 3.5750) node [] {}; 
\draw [gray!200] (0.2750, 3.0250) node [] {}; 
\draw [gray!200] (0.8250, 3.0250) node [] {}; 
\draw [gray!200] (1.3750, 3.0250) node [] {-1}; 
\draw [gray!200] (1.9250, 3.0250) node [] {}; 
\draw [gray!200] (2.4750, 3.0250) node [] {}; 
\draw [gray!200] (3.0250, 3.0250) node [] {}; 
\draw [gray!200] (3.5750, 3.0250) node [] {}; 
\draw [gray!200] (4.1250, 3.0250) node [] {}; 
\draw [gray!200] (4.6750, 3.0250) node [] {}; 
\draw [gray!200] (5.2250, 3.0250) node [] {}; 
\draw [gray!200] (5.7750, 3.0250) node [] {-1}; 
\draw [gray!200] (0.2750, 2.4750) node [] {}; 
\draw [gray!200] (0.8250, 2.4750) node [] {-1}; 
\draw [gray!200] (1.3750, 2.4750) node [] {}; 
\draw [gray!200] (1.9250, 2.4750) node [] {}; 
\draw [gray!200] (2.4750, 2.4750) node [] {}; 
\draw [gray!200] (3.0250, 2.4750) node [] {}; 
\draw [gray!200] (3.5750, 2.4750) node [] {}; 
\draw [gray!200] (4.1250, 2.4750) node [] {}; 
\draw [gray!200] (4.6750, 2.4750) node [] {}; 
\draw [gray!200] (5.2250, 2.4750) node [] {-1}; 
\draw [gray!200] (5.7750, 2.4750) node [] {}; 
\draw [gray!200] (0.2750, 1.9250) node [] {-1}; 
\draw [gray!200] (0.8250, 1.9250) node [] {}; 
\draw [gray!200] (1.3750, 1.9250) node [] {}; 
\draw [gray!200] (1.9250, 1.9250) node [] {}; 
\draw [gray!200] (2.4750, 1.9250) node [] {}; 
\draw [gray!200] (3.0250, 1.9250) node [] {}; 
\draw [gray!200] (3.5750, 1.9250) node [] {}; 
\draw [gray!200] (4.1250, 1.9250) node [] {}; 
\draw [gray!200] (4.6750, 1.9250) node [] {-1}; 
\draw [gray!200] (5.2250, 1.9250) node [] {}; 
\draw [gray!200] (5.7750, 1.9250) node [] {}; 
\draw [gray!200] (0.2750, 1.3750) node [] {}; 
\draw [gray!200] (0.8250, 1.3750) node [] {}; 
\draw [gray!200] (1.3750, 1.3750) node [] {}; 
\draw [gray!200] (1.9250, 1.3750) node [] {}; 
\draw [gray!200] (2.4750, 1.3750) node [] {}; 
\draw [gray!200] (3.0250, 1.3750) node [] {}; 
\draw [gray!200] (3.5750, 1.3750) node [] {}; 
\draw [gray!200] (4.1250, 1.3750) node [] {-1}; 
\draw [gray!200] (4.6750, 1.3750) node [] {}; 
\draw [gray!200] (5.2250, 1.3750) node [] {}; 
\draw [gray!200] (5.7750, 1.3750) node [] {}; 
\draw [gray!200] (0.2750, 0.8250) node [] {}; 
\draw [gray!200] (0.8250, 0.8250) node [] {}; 
\draw [gray!200] (1.3750, 0.8250) node [] {}; 
\draw [gray!200] (1.9250, 0.8250) node [] {}; 
\draw [gray!200] (2.4750, 0.8250) node [] {}; 
\draw [gray!200] (3.0250, 0.8250) node [] {}; 
\draw [gray!200] (3.5750, 0.8250) node [] {-1}; 
\draw [gray!200] (4.1250, 0.8250) node [] {}; 
\draw [gray!200] (4.6750, 0.8250) node [] {}; 
\draw [gray!200] (5.2250, 0.8250) node [] {}; 
\draw [gray!200] (5.7750, 0.8250) node [] {}; 
\draw [gray!200, line width=0.4pt](0.0000,0.5500)--(6.0500,0.5500)--(6.0500,6.6000)--(0.0000,6.6000)--(0.0000,0.5500);
\draw [gray!200] (6.8750, 3.5750) node [] {$+$}; 
\draw [ gray!200, line width=0.4pt](3.5750,0.8250) circle (0.2200) ;
\draw [ gray!200, line width=0.4pt](4.1250,1.3750) circle (0.2200) ;
\draw [ gray!200, line width=0.4pt](4.6750,1.9250) circle (0.2200) ;
\draw [ gray!200, line width=0.4pt](5.2250,2.4750) circle (0.2200) ;
\draw [ gray!200, line width=0.4pt](5.7750,3.0250) circle (0.2200) ;
\draw [gray!200] (1.9250, 4.6750) node [] {$h_m$}; 
\draw [ gray!200, line width=0.4pt](0.0000,3.3000)--(6.0500,3.3000);
\draw [ gray!200, line width=0.4pt](3.3000,0.5500)--(3.3000,6.6000);
\fill [gray!30, line width=0.4pt](8.2500,3.3000)--(11.0000,3.3000)--(11.0000,6.0500)--(8.2500,6.0500)--(8.2500,3.3000);
\draw [gray!200, line width=0.4pt](8.2500,3.3000)--(11.0000,3.3000)--(11.0000,6.0500)--(8.2500,6.0500)--(8.2500,3.3000);
\draw [gray!200] (7.9750, 6.3250) node [] {1}; 
\draw [gray!200] (8.5250, 6.3250) node [] {-1}; 
\draw [gray!200] (9.0750, 6.3250) node [] {-1}; 
\draw [gray!200] (9.6250, 6.3250) node [] {}; 
\draw [gray!200] (10.1750, 6.3250) node [] {-1}; 
\draw [gray!200] (10.7250, 6.3250) node [] {}; 
\draw [gray!200] (11.2750, 6.3250) node [] {}; 
\draw [gray!200] (11.8250, 6.3250) node [] {}; 
\draw [gray!200] (12.3750, 6.3250) node [] {-1}; 
\draw [gray!200] (12.9250, 6.3250) node [] {}; 
\draw [gray!200] (13.4750, 6.3250) node [] {}; 
\draw [gray!200] (7.9750, 5.7750) node [] {-1}; 
\draw [gray!200] (8.5250, 5.7750) node [] {-1}; 
\draw [gray!200] (9.0750, 5.7750) node [] {}; 
\draw [gray!200] (9.6250, 5.7750) node [] {-1}; 
\draw [gray!200] (10.1750, 5.7750) node [] {}; 
\draw [gray!200] (10.7250, 5.7750) node [] {}; 
\draw [gray!200] (11.2750, 5.7750) node [] {}; 
\draw [gray!200] (11.8250, 5.7750) node [] {-1}; 
\draw [gray!200] (12.3750, 5.7750) node [] {}; 
\draw [gray!200] (12.9250, 5.7750) node [] {}; 
\draw [gray!200] (13.4750, 5.7750) node [] {}; 
\draw [gray!200] (7.9750, 5.2250) node [] {-1}; 
\draw [gray!200] (8.5250, 5.2250) node [] {}; 
\draw [gray!200] (9.0750, 5.2250) node [] {-1}; 
\draw [gray!200] (9.6250, 5.2250) node [] {}; 
\draw [gray!200] (10.1750, 5.2250) node [] {}; 
\draw [gray!200] (10.7250, 5.2250) node [] {}; 
\draw [gray!200] (11.2750, 5.2250) node [] {-1}; 
\draw [gray!200] (11.8250, 5.2250) node [] {}; 
\draw [gray!200] (12.3750, 5.2250) node [] {}; 
\draw [gray!200] (12.9250, 5.2250) node [] {}; 
\draw [gray!200] (13.4750, 5.2250) node [] {}; 
\draw [gray!200] (7.9750, 4.6750) node [] {}; 
\draw [gray!200] (8.5250, 4.6750) node [] {-1}; 
\draw [gray!200] (9.0750, 4.6750) node [] {}; 
\draw [gray!200] (9.6250, 4.6750) node [] {}; 
\draw [gray!200] (10.1750, 4.6750) node [] {}; 
\draw [gray!200] (10.7250, 4.6750) node [] {-1}; 
\draw [gray!200] (11.2750, 4.6750) node [] {}; 
\draw [gray!200] (11.8250, 4.6750) node [] {}; 
\draw [gray!200] (12.3750, 4.6750) node [] {}; 
\draw [gray!200] (12.9250, 4.6750) node [] {}; 
\draw [gray!200] (13.4750, 4.6750) node [] {}; 
\draw [gray!200] (7.9750, 4.1250) node [] {-1}; 
\draw [gray!200] (8.5250, 4.1250) node [] {}; 
\draw [gray!200] (9.0750, 4.1250) node [] {}; 
\draw [gray!200] (9.6250, 4.1250) node [] {}; 
\draw [gray!200] (10.1750, 4.1250) node [] {-1}; 
\draw [gray!200] (10.7250, 4.1250) node [] {}; 
\draw [gray!200] (11.2750, 4.1250) node [] {}; 
\draw [gray!200] (11.8250, 4.1250) node [] {}; 
\draw [gray!200] (12.3750, 4.1250) node [] {}; 
\draw [gray!200] (12.9250, 4.1250) node [] {}; 
\draw [gray!200] (13.4750, 4.1250) node [] {}; 
\draw [gray!200] (7.9750, 3.5750) node [] {}; 
\draw [gray!200] (8.5250, 3.5750) node [] {}; 
\draw [gray!200] (9.0750, 3.5750) node [] {}; 
\draw [gray!200] (9.6250, 3.5750) node [] {-1}; 
\draw [gray!200] (10.1750, 3.5750) node [] {}; 
\draw [gray!200] (10.7250, 3.5750) node [] {}; 
\draw [gray!200] (11.2750, 3.5750) node [] {}; 
\draw [gray!200] (11.8250, 3.5750) node [] {}; 
\draw [gray!200] (12.3750, 3.5750) node [] {}; 
\draw [gray!200] (12.9250, 3.5750) node [] {}; 
\draw [gray!200] (13.4750, 3.5750) node [] {}; 
\draw [gray!200] (7.9750, 3.0250) node [] {}; 
\draw [gray!200] (8.5250, 3.0250) node [] {}; 
\draw [gray!200] (9.0750, 3.0250) node [] {-1}; 
\draw [gray!200] (9.6250, 3.0250) node [] {}; 
\draw [gray!200] (10.1750, 3.0250) node [] {}; 
\draw [gray!200] (10.7250, 3.0250) node [] {}; 
\draw [gray!200] (11.2750, 3.0250) node [] {}; 
\draw [gray!200] (11.8250, 3.0250) node [] {}; 
\draw [gray!200] (12.3750, 3.0250) node [] {}; 
\draw [gray!200] (12.9250, 3.0250) node [] {}; 
\draw [gray!200] (13.4750, 3.0250) node [] {-1}; 
\draw [gray!200] (7.9750, 2.4750) node [] {}; 
\draw [gray!200] (8.5250, 2.4750) node [] {-1}; 
\draw [gray!200] (9.0750, 2.4750) node [] {}; 
\draw [gray!200] (9.6250, 2.4750) node [] {}; 
\draw [gray!200] (10.1750, 2.4750) node [] {}; 
\draw [gray!200] (10.7250, 2.4750) node [] {}; 
\draw [gray!200] (11.2750, 2.4750) node [] {}; 
\draw [gray!200] (11.8250, 2.4750) node [] {}; 
\draw [gray!200] (12.3750, 2.4750) node [] {}; 
\draw [gray!200] (12.9250, 2.4750) node [] {-1}; 
\draw [gray!200] (13.4750, 2.4750) node [] {}; 
\draw [gray!200] (7.9750, 1.9250) node [] {-1}; 
\draw [gray!200] (8.5250, 1.9250) node [] {}; 
\draw [gray!200] (9.0750, 1.9250) node [] {}; 
\draw [gray!200] (9.6250, 1.9250) node [] {}; 
\draw [gray!200] (10.1750, 1.9250) node [] {}; 
\draw [gray!200] (10.7250, 1.9250) node [] {}; 
\draw [gray!200] (11.2750, 1.9250) node [] {}; 
\draw [gray!200] (11.8250, 1.9250) node [] {}; 
\draw [gray!200] (12.3750, 1.9250) node [] {-1}; 
\draw [gray!200] (12.9250, 1.9250) node [] {}; 
\draw [gray!200] (13.4750, 1.9250) node [] {}; 
\draw [gray!200] (7.9750, 1.3750) node [] {}; 
\draw [gray!200] (8.5250, 1.3750) node [] {}; 
\draw [gray!200] (9.0750, 1.3750) node [] {}; 
\draw [gray!200] (9.6250, 1.3750) node [] {}; 
\draw [gray!200] (10.1750, 1.3750) node [] {}; 
\draw [gray!200] (10.7250, 1.3750) node [] {}; 
\draw [gray!200] (11.2750, 1.3750) node [] {}; 
\draw [gray!200] (11.8250, 1.3750) node [] {-1}; 
\draw [gray!200] (12.3750, 1.3750) node [] {}; 
\draw [gray!200] (12.9250, 1.3750) node [] {}; 
\draw [gray!200] (13.4750, 1.3750) node [] {}; 
\draw [gray!200] (7.9750, 0.8250) node [] {}; 
\draw [gray!200] (8.5250, 0.8250) node [] {}; 
\draw [gray!200] (9.0750, 0.8250) node [] {}; 
\draw [gray!200] (9.6250, 0.8250) node [] {}; 
\draw [gray!200] (10.1750, 0.8250) node [] {}; 
\draw [gray!200] (10.7250, 0.8250) node [] {}; 
\draw [gray!200] (11.2750, 0.8250) node [] {-1}; 
\draw [gray!200] (11.8250, 0.8250) node [] {}; 
\draw [gray!200] (12.3750, 0.8250) node [] {}; 
\draw [gray!200] (12.9250, 0.8250) node [] {}; 
\draw [gray!200] (13.4750, 0.8250) node [] {}; 
\draw [gray!200, line width=0.4pt](7.7000,0.5500)--(13.7500,0.5500)--(13.7500,6.6000)--(7.7000,6.6000)--(7.7000,0.5500);
\draw [ gray!200, line width=0.4pt](11.2750,0.8250) circle (0.2200) ;
\draw [ gray!200, line width=0.4pt](11.8250,1.3750) circle (0.2200) ;
\draw [ gray!200, line width=0.4pt](7.9750,1.9250) circle (0.2200) ;
\draw [ gray!200, line width=0.4pt](12.3750,6.3250) circle (0.2200) ;
\draw [ gray!200, line width=0.4pt](12.9250,2.4750) circle (0.2200) ;
\draw [ gray!200, line width=0.4pt](13.4750,3.0250) circle (0.2200) ;
\draw [gray!200] (1.6500, 7.1500) node [] {$\overbrace{\hbox to 89.1000 pt{\qquad}}^{m}$}; 
\draw [gray!200] (9.9000, 0.0000) node [] {$\underbrace{\hbox to 118.8000 pt{\qquad}}_{2^k}$}; 
\draw [gray!200] (3.0250, 0.0000) node [] {$\underbrace{\hbox to 163.3500 pt{\qquad}}_{n}$}; 
\draw [gray!200] (9.6250, 4.6750) node [] {$g_{m-1}$}; 
\draw [gray!200] (7.9750, 6.8750) node [] {0}; 
\draw [gray!200] (8.5250, 6.8750) node [] {1}; 
\draw [gray!200] (9.0750, 6.8750) node [] {2}; 
\draw [gray!200] (9.6250, 6.8750) node [] {3}; 
\draw [gray!200] (10.1750, 6.8750) node [] {4}; 
\draw [gray!200] (10.7250, 6.8750) node [] {5}; 
\draw [gray!200] (11.2750, 6.8750) node [] {6}; 
\draw [gray!200] (11.8250, 6.8750) node [] {7}; 
\draw [gray!200] (12.3750, 6.8750) node [] {8}; 
\draw [gray!200] (12.9250, 6.8750) node [] {9}; 
\draw [gray!200] (13.4750, 6.8750) node [] {10}; 
\draw [ gray!200, line width=0.4pt](7.7000,1.6500)--(13.7500,1.6500);
\draw [ gray!200, line width=0.4pt](7.7000,2.2000)--(13.7500,2.2000);
\draw [ gray!200, line width=0.4pt](7.7000,3.3000)--(13.7500,3.3000);
\draw [ gray!200, line width=0.4pt](7.7000,6.0500)--(13.7500,6.0500);
\draw [ gray!200, line width=0.4pt](8.2500,0.5500)--(8.2500,6.6000);
\draw [ gray!200, line width=0.4pt](11.0000,0.5500)--(11.0000,6.6000);
\draw [ gray!200, line width=0.4pt](12.1000,0.5500)--(12.1000,6.6000);
\draw [ gray!200, line width=0.4pt](12.6500,0.5500)--(12.6500,6.6000);
\end{tikzpicture} 
\end{center}
\caption{Hankel determinant for $n=11, k=3, m=6$}
\label{fig:det}
\end{figure}

\begin{equation*}
	h_n= (-1)^{(n-m)(n-m+1)/2} h_m + (-1)^{(n-m)(n-m-1)/2+1} g_{m-1}.
\end{equation*}
	Since $m=2^{k+1}-n+1$ or $n-m=2n-2^{k+1}-1$, the above identity
	implies~\eqref{eq:hn}.
	\end{proof}

\begin{lemma}\label{th:sign}
	For each $n\geq 1$ we have $h_n \neq 0$ and
	\begin{equation}\label{eq:sign}
	\sign(h_n)=g_{n-1}.
	\end{equation}
\end{lemma}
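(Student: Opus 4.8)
The plan is to prove both assertions of Lemma~\ref{th:sign}---that $h_n \neq 0$ and that $\sign(h_n) = g_{n-1}$---simultaneously by strong induction on $n$, driven by the recursion \eqref{eq:hn} of Theorem~\ref{th:main:h}. A preliminary observation makes the statement meaningful: every $g_n$ is a unit. Indeed, $g_0 = 1$ and $g_1 = -1$, and the relation \eqref{eq:g:g} of Lemma~\ref{th:g:g} gives $g_n = \pm g_{2^{k+1}-n-1}$, so by induction $g_n \in \{+1,-1\}$ for all $n$. Thus $\sign(h_n) = g_{n-1}$ is precisely the claim that $h_n$ is a nonzero integer whose sign equals the unit $g_{n-1}$. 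For the base cases I simply check $h_1 = 1$ against $g_0 = 1$ and $h_2 = -2$ against $g_1 = -1$.

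For the inductive step, fix $n \geq 3$ and write $m = 2^{k+1} - n + 1$ with $2^k < n \leq 2^{k+1}$, as in Theorem~\ref{th:main:h}. As $n$ ranges over $(2^k, 2^{k+1}]$ the index $m$ ranges over $[1, 2^k]$, so $1 \leq m \leq 2^k < n$ and the induction hypothesis applies to $h_m$: we have $h_m \neq 0$ and $\sign(h_m) = g_{m-1}$. Since $g_{m-1} = \pm 1$ and $h_m$ is a nonzero integer carrying that sign, we obtain $g_{m-1}\, h_m = |h_m| \geq 1$, whence $g_{m-1}(h_m + g_{m-1}) = |h_m| + 1 \geq 2 > 0$. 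This shows both that $h_m + g_{m-1} \neq 0$ and that $\sign(h_m + g_{m-1}) = g_{m-1}$. Feeding this into \eqref{eq:hn}, namely $h_n = (-1)^n (h_m + g_{m-1})$, yields $h_n \neq 0$ and $\sign(h_n) = (-1)^n g_{m-1}$.

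It then remains to identify $(-1)^n g_{m-1}$ with $g_{n-1}$, and this is a single application of Lemma~\ref{th:g:g} to the index $n-1$. The hypothesis $2^k < n \leq 2^{k+1}$ is exactly the condition under which \eqref{eq:g:g} fires with the same $k$, giving $g_{n-1} = (-1)^{n} g_{2^{k+1}-n}$. Because $m - 1 = 2^{k+1} - n$, this reads $g_{n-1} = (-1)^n g_{m-1}$, matching the sign computed above and closing the induction.

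The main obstacle is not computational but structural: the nonvanishing and the sign statement must be carried jointly through the induction, since one cannot control $\sign(h_m + g_{m-1})$ without knowing simultaneously that $h_m$ is nonzero \emph{and} that it carries precisely the sign $g_{m-1}$. The crucial point is that the integer $h_m$ and the unit $g_{m-1}$ reinforce rather than cancel, which is guaranteed by $g_{m-1} h_m = |h_m| \geq 1$; everything else is bookkeeping between the two recursions.
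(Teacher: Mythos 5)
Your proof is correct and follows essentially the same route as the paper's: strong induction using the recursion $h_n=(-1)^n(h_m+g_{m-1})$ from Theorem~\ref{th:main:h}, the observation that the inductive hypothesis forces $h_m+g_{m-1}=g_{m-1}(|h_m|+1)$, and the reflection identity $g_{n-1}=(-1)^n g_{m-1}$ from Lemma~\ref{th:g:g}. You merely make explicit two points the paper leaves tacit (that $g_n\in\{\pm1\}$ and that $m\le 2^k<n$ so the induction is well-founded), which is fine.
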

\begin{proof}
	We prove \eqref{eq:sign} by induction on $n$.
	First we check that \eqref{eq:sign} is true for $n=1,2$.
	Suppose that \eqref{eq:sign} is true for $1,2,\ldots, n-1$.
	By Theorem \ref{th:main:h} and Lemma~\ref{th:g:g},  we have
\begin{equation*}
	h_n
	= (-1)^{n} (h_m + g_{m-1})
	= (-1)^{n} g_{m-1} (|h_m| + 1)
	= g_{n-1} (|h_m|+1),
\end{equation*}
	where $2^k< n\leq 2^{k+1}$ and  $ m=2^{k+1} - n + 1$.
	So $h_n \neq 0$ and $\sign(h_n)=g_{n-1}$.
\end{proof}
Lemma \ref{th:sign} and Theorem \ref{th:main:h} imply the following corollary
about the absolute values of the Hankel determinants of $B(x)$.
\begin{corollary}\label{th:abs:h}
	We have $|h_0|=|h_1|=1$ and for all $ n \geq 2$
\begin{equation}\label{eq:abs:h}
	|h_n| = |h_{2^{k+1}-n+1}|+1,
\end{equation}
where $2^k< n\leq 2^{k+1}$. 
\end{corollary}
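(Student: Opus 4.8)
The plan is to observe that \eqref{eq:abs:h} is already essentially contained in the proof of Lemma~\ref{th:sign}, so that only a little bookkeeping with absolute values and the boundary cases remains. The corollary is not really a new computation; it is a repackaging of the refined identity that appeared inside that proof.

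First I would record that every $g_n$ is a unit, i.e., $g_n \in \{-1,+1\}$, hence $|g_{n-1}| = 1$ for all $n \geq 1$. This follows by a short induction from Lemma~\ref{th:g:g}: the values $g_0 = 1$ and $g_1 = -1$ have modulus $1$, and the recurrence $g_n = (-1)^{n+1} g_{2^{k+1}-n-1}$ only multiplies an earlier value (with smaller index, since $2^{k+1}-n-1 < n$) by $\pm 1$.

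Next, for $n \geq 3$ I would combine Theorem~\ref{th:main:h} with the sign information of Lemma~\ref{th:sign}, exactly as in the displayed computation proving that lemma. Writing $m = 2^{k+1}-n+1$ with $2^k < n \leq 2^{k+1}$, Lemma~\ref{th:sign} gives $h_m = g_{m-1}\,|h_m|$ (using $h_m \neq 0$ and $\sign(h_m) = g_{m-1}$), so \eqref{eq:hn} becomes
$$h_n = (-1)^n(h_m + g_{m-1}) = (-1)^n g_{m-1}\,(|h_m|+1).$$
Applying Lemma~\ref{th:g:g} to the index $n-1$ yields $(-1)^n g_{m-1} = g_{n-1}$, whence $h_n = g_{n-1}(|h_m|+1)$. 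Taking absolute values and using $|g_{n-1}| = 1$ from the previous step gives $|h_n| = |h_m| + 1 = |h_{2^{k+1}-n+1}|+1$, which is \eqref{eq:abs:h}.

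Finally I would dispose of the base cases not covered by Theorem~\ref{th:main:h}: $|h_0| = |h_1| = 1$ are immediate from $h_0 = h_1 = 1$, and for $n = 2$ one has $2^k < 2 \leq 2^{k+1}$ forcing $k = 0$ and $m = 2^{k+1}-n+1 = 1$, so that $|h_2| = 2 = |h_1| + 1$ directly from $h_2 = -2$. I do not expect any real obstacle here: the only points needing care are the identity $(-1)^n g_{m-1} = g_{n-1}$ (a direct application of Lemma~\ref{th:g:g} with the correct value of $k$) and the fact that $n = 2$ must be checked by hand because the recurrence in Theorem~\ref{th:main:h} is only asserted for $n \geq 3$.
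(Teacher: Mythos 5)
Your proposal is correct and follows essentially the same route as the paper, which derives the corollary directly from Lemma~\ref{th:sign} and Theorem~\ref{th:main:h}; the identity $h_n = (-1)^n g_{m-1}(|h_m|+1) = g_{n-1}(|h_m|+1)$ you reconstruct is exactly the displayed computation inside the proof of Lemma~\ref{th:sign}. Your explicit checks that $|g_{n-1}|=1$ and that $n=2$ must be handled separately (since Theorem~\ref{th:main:h} starts at $n=3$) are correct details the paper leaves implicit.
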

Now we are ready to prove Conjectures 6, 7 and 10 of P. Barry \cite{barry}.
\begin{theorem}[Barry's Conjecture 6]\label{th:Barry6}
	For each $n\geq 0$,
	we have
	\begin{equation}
		|H_{n+1}(1-xr(x))|  = s_{n},
\end{equation}
	where the sequence $(s_n)_{n\geq 0}$ is defined in \eqref{def:sn}.
\end{theorem}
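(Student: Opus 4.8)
The plan is to reduce everything to the absolute-value recurrence already established in Corollary~\ref{th:abs:h} and to match it against a reflection identity for $(s_n)$. Since $h_n = H_n(B(x))$ with $B(x) = 1 - xr(x)$, the assertion $|H_{n+1}(1-xr(x))| = s_n$ is exactly $|h_{n+1}| = s_n$. I would set $a_n := |h_{n+1}|$ and prove $a_n = s_n$ for all $n \geq 0$ by strong induction. The base cases $a_0 = |h_1| = 1 = s_0$ and $a_1 = |h_2| = 2 = s_1$ are immediate.

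For the inductive step, fix $n \geq 1$ and let $k$ be the integer with $2^k \leq n \leq 2^{k+1}-1$ (equivalently $2^k < n+1 \leq 2^{k+1}$). Applying Corollary~\ref{th:abs:h} with $N = n+1 \geq 2$ gives $|h_{n+1}| = |h_{2^{k+1}-n}| + 1$, which in the $a$-notation reads $a_n = a_{2^{k+1}-1-n} + 1$. The reflected index $m := 2^{k+1}-1-n$ satisfies $0 \leq m \leq 2^k - 1 < n$, so the induction hypothesis yields $a_m = s_m$; hence it suffices to prove the companion identity for $(s_n)$:
$$
s_n = s_{2^{k+1}-1-n} + 1 \qquad (2^k \leq n \leq 2^{k+1}-1).
$$
Granting this, $a_n = a_m + 1 = s_m + 1 = s_n$, which closes the induction.

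To establish the reflection identity I would use the run interpretation from Remark~\ref{adamson}: $s_n - 1$ equals the number of runs in the binary expansion of $n$. For $n$ in the range $[2^k,\, 2^{k+1}-1]$ the standard $(k+1)$-digit binary string of $n$ begins with a $1$, while $2^{k+1}-1-n$ is precisely its bitwise complement on $k+1$ digits, hence begins with a $0$. Complementing every digit preserves the positions of the run boundaries, so the two $(k+1)$-digit strings have the same number $R$ of runs, whence $s_n - 1 = R$. For the complement, passing to its leading-zero-free expansion strips off the leading run of zeros and so deletes exactly one run, giving $s_{2^{k+1}-1-n} - 1 = R - 1$ and thus the displayed identity (the extreme case $n = 2^{k+1}-1$, where the complement is $0$ and $s_0 = 1$, is checked directly).

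I expect the only genuinely non-routine step to be this reflection identity for $(s_n)$, i.e.\ the bookkeeping that bitwise complementation preserves the run count while passing to the leading-zero-free expansion of the complement costs exactly one run; the rest is a direct transcription of Corollary~\ref{th:abs:h}. As an alternative to the run argument, one could instead verify $s_n = s_{2^{k+1}-1-n}+1$ purely from the four recurrences $s_{4n}=s_{2n}$, $s_{4n+1}=s_{2n}+1$, $s_{4n+2}=s_{2n+1}+1$, $s_{4n+3}=s_{2n+1}$ recorded in the proof of Proposition~\ref{16n+2}, by a secondary induction on $k$.
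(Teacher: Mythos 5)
Your proof is correct, and while it runs on the same engine as the paper's---Corollary~\ref{th:abs:h}, i.e.\ the reflection $|h_{n+1}|=|h_{2^{k+1}-n}|+1$---it closes the induction in a genuinely different way: you match \emph{values} of $s_n$ under reflection, whereas the paper matches \emph{increments}. The paper reduces the claim to $|h_{n+1}|-|h_n|=j_n$ (equivalent to $|h_{n+1}|=s_n$ by the definition $s_n=1+\sum_{0\le k\le n}j_k$) and proves that difference identity by an induction exploiting the anti-symmetry of $j$ under $n\mapsto 2^{k+1}-n$, with a separate case at $n=2^{k+1}$ and a case split on $n=2^s(2m+1)$. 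You instead prove the companion identity $s_n=s_{2^{k+1}-1-n}+1$ for $2^k\le n\le 2^{k+1}-1$ directly from the run-count interpretation of $s_n$ (Remark~\ref{adamson}): complementing all $k+1$ digits preserves the run boundaries, and passing to the leading-zero-free expansion of the complement strips exactly one run. That identity is true (the edge case $n=2^{k+1}-1$ is handled correctly), the reflected index $2^{k+1}-1-n\le 2^k-1<n$ makes the strong induction legitimate, and the base case $|h_1|=1=s_0$ is all you really need. What your route buys is a cleaner induction with no case analysis on the $2$-adic valuation of $n$; what it costs is reliance on Theorem~\ref{conj1} (already established in the paper, so this is fine) and the loss of the increment identity \eqref{eq:barry:need} as an explicit byproduct, which the paper reuses later in the proofs of Theorems~\ref{th:h:B1} and~\ref{th:h:regular}.
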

\begin{proof}
	Equivalently, it suffices to prove $h_1=1=s_0$ and
	\begin{equation}\label{eq:barry:need}
	|h_{n+1}|-|h_{n}| = j_n=(-1)^{m},
\end{equation}
	where $n=2^s (2m+1)\geq 1$. We consider two cases.

\bigskip

\noindent {(i)}  The case $n=2^{k+1}$, i.e., $m=0$: by \eqref{eq:abs:h}, we have
$$
	|h_{n+1}|=  |h_{2^{k+2}-n}| +1 = |h_{n}|+1.
$$

\bigskip

\noindent {(ii)} The case $2^k<n<2^{k+1}$ or $2^k+1<n+1\leq 2^{k+1}$: by \eqref{eq:abs:h}, we have
\begin{align*}
	|h_n|  &= |h_{2^{k+1}-n+1}| +1 ;\cr
	|h_{n+1}|  &= |h_{2^{k+1}-n}| +1 .
\end{align*}
So 
\begin{equation*}
	|h_{n+1}| - |h_{n}| = -( |h_{n'+1}| - |h_{n'}|),
\end{equation*}
	where $n'=2^{k+1} -n <n$. Hence we can prove \eqref{eq:barry:need} by induction on $n$.
	Since  $k\geq s+1$, and
	$$
	n'=2^{k+1}-n=2^{k+1} -2^s (2m+1) = 2^s(2(2^{k-s} -m-1) +1),
	$$
By the induction hypothesis, we get
$$
	|h_{n'+1}| - |h_{n'}| = (-1)^{2^{k-s} -m-1} = (-1)^{m+1},
$$
so that
	\begin{equation*}
	|h_{n+1}| - |h_{n}| = -( |h_{n'+1}| - |h_{n'}|) = (-1)^m.
		\end{equation*}
\end{proof}
Barry's Conjectures 7 and 10 are consequence of the above Theorem.

\begin{corollary}[Barry's Conjecture 8]\label{th:barry11}
	The sequence
\begin{equation*}
	u_n= \frac{|\sign(h_{n+1})-\sign(h_n)|}{2} \qquad (n\geq 1)
\end{equation*}
	is the paperfolding sequence on $\{0,1\}$, i.e., $u_{2n}=u_n, u_{4n+1}=1, u_{4n+3}=0$.
\end{corollary}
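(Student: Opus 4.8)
The plan is to reduce the whole statement to a question about \emph{sign changes} of the auxiliary sequence $(g_n)$. By Lemma~\ref{th:sign} we have $\sign(h_n)=g_{n-1}$ for all $n\geq 1$, and in particular each $g_n\in\{+1,-1\}$ (since $h_n\neq 0$). Hence for $n\geq 1$
$$
u_n=\frac{|\sign(h_{n+1})-\sign(h_n)|}{2}=\frac{|g_n-g_{n-1}|}{2},
$$
so $u_n$ is exactly the indicator of a sign change in $(g_n)$: it equals $0$ when $g_n=g_{n-1}$ and $1$ when $g_n=-g_{n-1}$. This turns the claim into a purely combinatorial statement about when consecutive terms of $(g_n)$ agree, which I can settle directly through the two-term recurrences \eqref{eq:g:prop4}.

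Next I would verify the three defining relations of the $\{0,1\}$-paperfolding sequence one at a time. For $u_{2n}=u_n$, I use \eqref{eq:g:prop4} to write $g_{2n}=(-1)^{n(n+1)/2}g_n$ and, taking $g_{2m+1}$ with $m=n-1$, $g_{2n-1}=(-1)^{n(n+1)/2}g_{n-1}$; the two carry the \emph{same} sign factor, which therefore factors out of $g_{2n}-g_{2n-1}=(-1)^{n(n+1)/2}(g_n-g_{n-1})$, giving $|g_{2n}-g_{2n-1}|=|g_n-g_{n-1}|$ and hence $u_{2n}=u_n$. For $u_{4n+1}=1$, I express both $g_{4n}$ and $g_{4n+1}$ in terms of $g_{2n}$ via \eqref{eq:g:prop4}; the difference of the two sign exponents collapses to $2n+1$, so the ratio is $(-1)^{2n+1}=-1$, i.e.\ $g_{4n+1}=-g_{4n}$ and the indicator is $1$. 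For $u_{4n+3}=0$, I express $g_{4n+2}$ and $g_{4n+3}$ in terms of $g_{2n+1}$; now the exponents differ by $2(n+1)$, an even number, so $g_{4n+3}=g_{4n+2}$ and the indicator is $0$.

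These three relations are precisely the recursive definition of the paperfolding sequence on $\{0,1\}$ (with $n=0$ permitted in the last two, yielding $u_1=1$ and $u_3=0$), so together they identify $(u_n)_{n\geq 1}$ with it. The only real bookkeeping is the reduction of the triangular-number exponents $n(n+1)/2$ modulo $2$, and the point that keeps this painless is that in every case I need only the \emph{difference} of two such exponents; each difference factors as a single product whose parity is immediate (respectively $0$, $2n+1$, and $2(n+1)$). I therefore expect no genuine obstacle: once $u_n$ has been rewritten as the sign-change indicator of $(g_n)$, the verification is a short parity computation off \eqref{eq:g:prop4}.
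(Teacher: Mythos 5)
Your proof is correct and follows essentially the same route as the paper: reduce to $u_n = \tfrac{1}{2}|g_n - g_{n-1}|$ via Lemma~\ref{th:sign}, then verify the three paperfolding relations by comparing the triangular-number sign exponents in \eqref{eq:g:prop4}, whose differences are $0$, $2n+1$, and $2(n+1)$ exactly as you compute. No gaps.
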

\begin{proof}
	By Lemma \ref{th:sign},
\begin{equation*}
	u_n= \frac{|g_{n}-g_{n-1}|}{2} \qquad (n\geq 1).
\end{equation*}
	By relation \eqref{eq:g:prop4}, we have
\begin{align*}
	u_{2n} &= \frac{1}{2}{\left|(-1)^{n(n+1)/2}g_n - (-1)^{n(n+1)/2}g_{n-1}\right|} = u_n; \cr
	u_{4n+1} &= \frac{1}{2}{\left|(-1)^{(2n+1)(2n+2)/2}g_{2n} - (-1)^{2n(2n+1)/2}g_{2n}\right|} = |g_{2n}|=1; \cr
	u_{4n+3} &= \frac{1}{2} {\left|(-1)^{(2n+2)(2n+3)/2}g_{2n+1} - (-1)^{(2n+1)(2n+2)/2}g_{2n+1}\right|} = 0. 
\end{align*}
\end{proof}

Barry's Conjecture 9 is true by Corollary \ref{th:barry11} (Barry's Conjecture 8) and the fact 
that $|a+b|+|a-b|=2$ for $a,b\in\{-1,1\}$.

\medskip

We give an algorithmic description of the sequence $H(B(x))$.
For $y \in {\mathbb Z}$, we define
\begin{equation}\label{def:y+}
	y^+=
	\begin{cases}
		y+1, & \text{if\ } y>0\\
		y-1. & \text{if\ } y\leq 0
	\end{cases}
\end{equation}
Note that for $|y^+| = |y| + 1$ and that $\sign(y^+) = \sign(y)$ for $y \neq 0$.

\begin{theorem}\label{th:h:B1}
The Hankel determinants $h_n$ of the sequence $B(x)$ are characterized by 
$h_0 = h_1 = 1$,$h_2=-2$ and

\medskip

	{\rm (a)} \  $h_{8n} = h_{4n}$        

	{\rm (b)} \  $h_{8n+1} = h_{4n+1}$

	{\rm (c)} \ $h_{8n+2} = h_{4n+2}$

	{\rm (d)} \ $h_{8n+3} = -(h_{4n+2})^+$

	{\rm (e)} \ $h_{8n+4} = -h_{4n+2}$

	{\rm (f)} \ $h_{8n+5} = -h_{4n+3}$

	{\rm (g)} \ $h_{8n+6} = (h_{4n+3})^+$

	{\rm (h)} \ $h_{8n+7} = h_{4n+3}$.

	\end{theorem}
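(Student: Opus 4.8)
The plan is to reduce everything to the two facts already proved: the sign of the Hankel determinant is controlled by the $g$-sequence (Lemma~\ref{th:sign}) and its absolute value by the $s$-sequence (Theorem~\ref{th:Barry6}). Concretely, for $n\geq 1$ one has $h_n\neq 0$, $|h_n|=s_{n-1}$, and $\sign(h_n)=g_{n-1}$; since a sign is $\pm 1$ this also shows $g_{n-1}\in\{+1,-1\}$, and therefore
\[
h_n = g_{n-1}\,s_{n-1}\qquad(n\ge 1).
\]
Each of the identities (a)--(h) then factors into an assertion about the magnitudes $s_m$ and an assertion about the signs $g_m$, which I would handle separately; for the two identities involving $^+$ I would use $|y^+|=|y|+1$ and $\sign(y^+)=\sign(y)$, so that $(h_{4n+2})^+ = g_{4n+1}(s_{4n+1}+1)$ and $(h_{4n+3})^+ = g_{4n+2}(s_{4n+2}+1)$.

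For the magnitudes I would invoke the base-$4$ form of Proposition~\ref{prop-conj1} (recorded in the proof of Proposition~\ref{16n+2}), namely $s_{4m}=s_{2m}$, $s_{4m+1}=s_{2m}+1$, $s_{4m+2}=s_{2m+1}+1$, $s_{4m+3}=s_{2m+1}$; for the signs I would use \eqref{eq:g:prop4}. A representative one-step case is (e): here $s_{8n+3}=s_{4n+1}$ immediately, while writing $8n+3=2(4n+1)+1$ and using \eqref{eq:g:prop4} gives $g_{8n+3}=(-1)^{(2n+1)(4n+3)}g_{4n+1}=-g_{4n+1}$ because the exponent is odd; multiplying yields $h_{8n+4}=g_{8n+3}s_{8n+3}=-g_{4n+1}s_{4n+1}=-h_{4n+2}$. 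The cases (a), (b), (d), (f), (g) are entirely parallel: halving the left index once lands precisely on the index occurring on the right, and a single parity computation of the triangular-number exponent in \eqref{eq:g:prop4} produces the claimed sign (and the $+1$ in (d), (g) comes directly from $s_{4m+2}=s_{2m+1}+1$, resp.\ $s_{4m+1}=s_{2m}+1$).

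The only cases needing extra care are (c) and (h), where a single halving of the left index lands on the wrong residue class modulo $4$: $8n+1=2(4n)+1$ reduces $g_{8n+1}$ and $s_{8n+1}$ to index $4n$, whereas (c) carries index $4n+1$; and $8n+6=2(4n+3)$ reduces $g_{8n+6}$ and $s_{8n+6}$ to index $4n+3$, whereas (h) carries index $4n+2$. Here I would bridge the gap with one auxiliary identity on each side: on the magnitude side $s_{8n+1}=s_{4n}+1=s_{4n+1}$ and $s_{8n+6}=s_{4n+3}+1=s_{4n+2}$ (using $s_{4n}=s_{2n}=s_{4n+1}-1$ and $s_{4n+3}=s_{2n+1}=s_{4n+2}-1$), and on the sign side $g_{8n+1}=-g_{4n}=g_{4n+1}$ and $g_{8n+6}=g_{4n+3}=g_{4n+2}$ (using the auxiliary identities $g_{4n+1}=-g_{4n}$ and $g_{4n+3}=g_{4n+2}$, each obtained by expressing both $g$'s through $g_{2n}$, resp.\ $g_{2n+1}$, via \eqref{eq:g:prop4} and comparing parities). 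I expect this residue mismatch, together with the parity bookkeeping of the triangular-number exponents in \eqref{eq:g:prop4}, to be the only genuine obstacle; every remaining step is a finite parity check.

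Finally I would settle the initial indices. For $n=0$, identities (a)--(c) are the trivial $h_0=h_0$, $h_1=h_1$, $h_2=h_2$ furnished by the base values $h_0=h_1=1$ and $h_2=-2$, while (d)--(h) yield $h_3,\dots,h_7$, which I would check directly against the listed values $(h_0,\dots,h_7)=(1,1,-2,3,2,-3,4,3)$. For $n\ge 1$ every index appearing on either side is at least $3$, so the formula $h_n=g_{n-1}s_{n-1}$ and all the recursions above apply without edge effects, completing the verification of the eight cases.
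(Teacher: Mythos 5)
Your proposal is correct and follows essentially the same route as the paper: both arguments split each identity (a)--(h) into a sign part, handled via $\sign(h_n)=g_{n-1}$ (Lemma~\ref{th:sign}) together with the recursion \eqref{eq:g:prop4}, and a magnitude part. The only difference is one of bookkeeping: for the magnitudes the paper runs an induction on $n$ using the increment relation $|h_{n+1}|-|h_n|=j_n$ from \eqref{eq:barry:need}, whereas you substitute the closed form $|h_n|=s_{n-1}$ from Theorem~\ref{th:Barry6} and transfer the work to the already-established base-$4$ recurrences for $s_n$; the two are equivalent, and your parity computations (including the bridging identities $g_{4n+1}=-g_{4n}$ and $g_{4n+3}=g_{4n+2}$ for cases (c) and (h)) all check out.
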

\begin{proof}
The ``sign'' parts of (a)--(h) are easily derived from the four identities:
$$
\begin{aligned}
	\sign(h_{2n}) &= (-1)^{n(n+1)/2} \sign(h_{n}),  \\
	\sign(h_{2n+1}) &= (-1)^{n(n+1)/2} \sign(h_{n+1}) \\
	\sign(h_{4n+1}) &= (-1)^n \sign(h_{2n+1}) = - \sign(h_{4n+2}) \\
	\sign(h_{4n+4}) &= (-1)^{n+1} \sign(h_{2n+2}) = \sign(h_{4n+3}) \\
\end{aligned}
$$
The first two relations are consequences of \eqref{eq:sign} and \eqref{eq:g:prop4}, and they
immediately imply the other ones.

\bigskip

Now it suffices to prove the statement in Theorem~\ref{th:h:B1} for the ``absolute values'' parts.
We use induction on $n$, where the induction hypothesis ${\mathcal H}_n$ is: 
the relations (a) to (h) are true for all $(k, j)$ with $j \in [0,7]$ and $8k+j \leq 8n$.
It is easy to see that ${\mathcal H}_0$ is true. Now suppose ${\mathcal H}_n$ is true and let
us prove that the relations (b) to (h) hold and that $h_{8n+8} = h_{4n+4}$.
Let $n=2^s(2r+1)$. Using relations \eqref{eq:barry:need} and the induction hypothesis we have:

\medskip

	(b) Let $n=2^s(2m+1)$. Then we have
	$$|h_{8n+1}| = |h_{8n}| + (-1)^m = |h_{4n}| + (-1)^m = |h_{4n+1}|.$$
	
	(c) $|h_{8n+2}| = |h_{8n+1}| +1 = |h_{4n+1}| +1 = |h_{4n+2}|$.
	
	(d) $|h_{8n+3}| = |h_{8n+2}| +1 = |h_{4n+2}|+1 = |(h_{4n+2})^+|$.

	(e) $|h_{8n+4}| = |h_{8n+3}| -1= |(h_{4n+2})^+| - 1  = |h_{4n+2}|$.

	(f) $|h_{8n+5}| = |h_{8n+4}| +(-1)^n= |h_{4n+2}|+(-1)^n = |h_{4n+3}|$.

	(g) $|h_{8n+6}| = |h_{8n+5}| +1=  |h_{4n+3}|+1 = |(h_{4n+3})^+|$.

	(h) $|h_{8n+7}| = |h_{8n+6}| -1=  |(h_{4n+3})^+| - 1 = |h_{4n+3}|$.

\medskip

and finally

\medskip

	(a) $|h_{8n+8}| = |h_{8n+7}| -1=  |h_{4n+3}|-1 = |h_{4n+4}|$.
	
\end{proof}

The sequences $(g_n)_{n \geq 0}$ and $(h_n)_{n \geq 0}$ are somewhat related to the
(regular) paperfolding sequence, which is a $2$-automatic sequence. It is thus natural to ask 
whether they are automatic or regular (for more about $d$-automatic and $d$-regular sequences,
the reader can consult \cite{AS}, in particular Chapters 5 and 16). This question will be answered 
in the next two theorems.

\begin{theorem}\label{th:g:auto}
The sequence $(g_n)_{n \geq 0}$ is $2$-automatic.
\end{theorem}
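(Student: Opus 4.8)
The plan is to use the standard characterization (see \cite{AS}): a sequence over a finite alphabet is $2$-automatic if and only if its $2$-kernel $\{(g_{2^i n + j})_{n \ge 0} : i \ge 0,\ 0 \le j < 2^i\}$ is finite. First I would record that $(g_n)$ takes only the values $\pm 1$: indeed $g_0 = 1$, and the relations \eqref{eq:g:prop4} express each $g_{2n}$ and each $g_{2n+1}$ as $\pm g_n$, so by induction $g_n \in \{-1,+1\}$ for all $n$. Thus the alphabet is finite, and it remains to bound the $2$-kernel.

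The engine is \eqref{eq:g:prop4}, rewritten as $g_{2n} = (-1)^{a(n)} g_n$ and $g_{2n+1} = (-1)^{b(n)} g_n$, where $a(n) = n(n+1)/2$ and $b(n) = (n+1)(n+2)/2$ are taken mod $2$. The crucial point is that both $(-1)^{a(n)}$ and $(-1)^{b(n)}$ are periodic in $n$ with period $4$ (one checks the patterns $+,-,-,+$ and $-,-,+,+$ for $n \equiv 0,1,2,3$). I would exploit this as follows. Fix $i \ge 3$ and $j \in [0, 2^i - 1]$, and write $j = 2 j'' + \beta$ with $\beta \in \{0,1\}$ and $j'' = \lfloor j/2 \rfloor$. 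Then $2^i n + j = 2(2^{i-1} n + j'') + \beta$, so setting $m = 2^{i-1} n + j''$, relation \eqref{eq:g:prop4} gives $g_{2^i n + j} = (-1)^{a(m)} g_m$ if $\beta = 0$ and $g_{2^i n + j} = (-1)^{b(m)} g_m$ if $\beta = 1$. Since $i - 1 \ge 2$ we have $2^{i-1} n \equiv 0 \pmod 4$, hence $m \equiv j'' \pmod 4$ for \emph{every} $n$; therefore the sign $(-1)^{a(m)}$ or $(-1)^{b(m)}$ is a constant $\varepsilon \in \{-1,+1\}$ independent of $n$, and $g_{2^i n + j} = \varepsilon\, g_{2^{i-1} n + j''}$.

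Iterating this reduction lowers the level $i$ by one at each step while preserving the shape of the subsequence up to a fixed sign, and it applies as long as the current level is at least $3$. Hence every $2$-kernel element with $i \ge 2$ equals $\pm g_{4n + r}$ for some $r \in \{0,1,2,3\}$, so the $2$-kernel is contained in the finite set $\{\pm (g_{4n+r})_{n \ge 0} : r \in \{0,1,2,3\}\} \cup \{(g_n)_{n\ge0},\,(g_{2n})_{n\ge0},\,(g_{2n+1})_{n\ge0}\}$. Finiteness of the kernel then yields $2$-automaticity. The only subtlety—and the step I would state most carefully—is precisely that the correction signs depend on $n$ modulo $4$ rather than modulo $2$; this is harmless because once the index is written as $2^{i-1} n + j''$ with $i - 1 \ge 2$, the term $2^{i-1} n$ is divisible by $4$, so the residue modulo $4$ is independent of $n$, which is exactly what forces each reduction sign to be constant and makes the kernel close up.
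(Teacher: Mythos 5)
Your proof is correct, and it reaches the finiteness of the $2$-kernel by a genuinely different route than the paper. The paper works directly from \eqref{eq:g:prop4} by introducing the sign sequences $\gamma_n = (-1)^{n(n+1)/2}$ and $\delta_n = (-1)^{(n+1)(n+2)/2}$, deriving a handful of algebraic identities among them, and then verifying six explicit closure relations ($g_{4n+1}=g_{2n+1}$, $g_{4n+2}=g_{4n+3}=g_{2n}$, $g_{8n}=g_{4n}$, $g_{16n+4}=g_{2n+1}$, $g_{16n+12}=g_{8n+4}$) that pin down the kernel as a specific finite list. You instead run a uniform descent: since $(-1)^{n(n+1)/2}$ and $(-1)^{(n+1)(n+2)/2}$ have period $4$, and since $2^{i-1}n$ is divisible by $4$ once $i\ge 3$, the correction sign in $g_{2^in+j}=\varepsilon\, g_{2^{i-1}n+\lfloor j/2\rfloor}$ depends only on $(i,j)$, so every kernel element at level $i\ge 2$ collapses to $\pm(g_{4n+r})_{n\ge0}$; you correctly isolate the only delicate point, namely that the periodicity is modulo $4$ rather than $2$, which is exactly why the descent must stop at level $2$ rather than level $1$. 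Your version is arguably more conceptual and more general (it applies to any sequence satisfying $g_{2n+\beta}=\epsilon_\beta(n)g_n$ with periodic $\epsilon_\beta$), at the cost of not exhibiting the explicit kernel relations, which the paper reuses implicitly in its surrounding results; the paper itself notes in a subsequent remark that an argument via the periodicity of these sign sequences is available, and your proof is essentially a careful realization of that remark. One small point worth stating explicitly if you write this up: you should confirm $\lfloor j/2\rfloor\in[0,2^{i-1}-1]$ so that the reduced subsequence is again a legitimate kernel element, which holds since $j\le 2^i-1$.
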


\begin{proof}
To prove that the sequence $(g_n)_{n \geq 0}$ is $2$-automatic, we have to prove that the set
of subsequences $\{(g_{2^n+j}, \ n \geq 0, \ j \in [0,2^{k-1}]\}$ is finite. 
It suffices to prove the
following relations: for all $n \geq 0$
\begin{align*}
g_{4n+1} &= g_{2n+1} \ \ \ &g_{4n+2} &= g_{2n} \ \ \ &g_{4n+3} &= g_{2n} \\
g_{8n} &= g_{4n} \ \ \ &g_{16n+4} &= g_{2n+1} \ \ \ &g_{16n+12}& = g_{8n+4}. 
\end{align*}
We have seen in Equation~(\ref{eq:g:prop4}) that $(g_n)$ satisfies for all $n \geq 0$
$$
 g_{2n} = (-1)^{n(n+1)/2} g_n \ \ \text{and} \ \ g_{2n+1} = (-1)^{(n+1)(n+2)/2} g_n.
$$
Let us define $\gamma_n = (-1)^{n(n+1)/2}$ and $\delta_n = (-1)^{(n+1)(n+2)/2}$ so that
$g_{2n}= \gamma_n g_n$ and $g_{2n+1} = \delta_n g_n$. It easy to see that
$$
\begin{aligned}
\gamma_{2n} &= (-1)^n, \ \ \ \gamma_{2n+1} = \delta_{2n} = \delta_{2n+1} = (-1)^{n+1}, \\
\delta_{2n} \gamma_n& = \delta_n = -\gamma_{2n} \gamma_n, \ \ \ 
\gamma_{2n+1} \delta_n = \delta_{2n+1} \delta_n = \gamma_n.
\end{aligned}
$$
Thus
$$
\begin{aligned}
g_{4n+1} &= g_{2(2n)+1} = \delta_{2n} g_{2n} = \delta_{2n} \gamma_n g_{2n} 
= \delta_n \gamma_n^2 g_n = \delta_n g_n = g_{2n+1} \\
g_{4n+2} &= g_{2(2n+1)} = \gamma_{2n+1} g_{2n+1} =  \gamma_{2n+1} \delta_n g_n 
= \gamma_n g_n = g_{2n} \\
g_{4n+3} &= g_{2(2n+1)+1} = \delta_{2n+1} g_{2n+1} = \delta_{2n+1} \delta_n g_n 
= \gamma_n g_n = g_{2n} \\
g_{8n} &= g_{2(4n)} = \gamma_{4n} g_{4n} = (-1)^{2n} g_{4n} = g_{4n}.\\
\end{aligned}
$$
Now
$$
\begin{aligned}
g_{4n} &= g_{2(2n)} = \gamma_{2n} g_{2n}= \gamma_{2n} \gamma_n g_n 
= (-1)^n \gamma_n g_n \ \ \text{which implies} \\ 
g_{8n+4} &= g_{4(2n+1)} = - \gamma_{2n+1} g_{2n+1} = - \gamma_{2n+1} \delta_n g_n = 
- \gamma_n g_n.
\end{aligned}
$$
Hence
$$
\begin{aligned}
g_{16n+4} &= g_{8(2n)+4} = - \gamma_{2n} g_{2n} = - \gamma_{2n} \gamma_n g_n 
= \delta_n g_n = g_{2n+1} \\
g_{16n+12} &= g_{8(2n+1)+4} = - \gamma_{2n+1} g_{2n+1} = - \gamma_{2n+1} \delta_n g_n
= - \gamma_n g_n = g_{8n+4}.
\end{aligned}
$$

\end{proof}
 
Now we prove that the sequence $(h_n)_{n \geq 0}$ is $2$-regular (see \cite[Chapter 16]{AS} 
for more on $d$-regular sequences).

\begin{theorem}\label{th:h:regular}
The sequence $(h_n)_{n \geq 0}$ is $2$-regular.
\end{theorem}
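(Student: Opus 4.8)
The plan is to verify directly that the $2$-kernel of $(h_n)_{n\geq 0}$ is contained in a finitely generated $\Zee$-module, which is exactly the definition of $2$-regularity recalled in Section~\ref{sec3}. For a sequence $x=(x_n)_{n\geq 0}$ and $b\in\{0,1\}$ I write $x^{[b]}=(x_{2n+b})_{n\geq 0}$ for its two decimations, so that the $2$-kernel of $x$ is the closure of $\{x\}$ under the operators $x\mapsto x^{[0]}$ and $x\mapsto x^{[1]}$. Since $(g_n)_{n\geq 0}$ is $2$-automatic by Theorem~\ref{th:g:auto}, its $2$-kernel is a \emph{finite} set; hence the $\Zee$-module $\mathcal G$ it generates is finitely generated, and $\mathcal G$ is stable under both decimation operators.

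The key preliminary step is to \emph{linearize} the recurrences of Theorem~\ref{th:h:B1}. The only nonlinear ingredient there is the operation $y\mapsto y^{+}$, and it is applied only to $h_{4n+2}$ and $h_{4n+3}$, whose indices exceed $0$; hence by Lemma~\ref{th:sign} these values are nonzero, so that $y^{+}=y+\sign(y)$ for them. Combining this with $\sign(h_k)=g_{k-1}$ (Lemma~\ref{th:sign}) turns parts (d) and (g) of Theorem~\ref{th:h:B1} into
\begin{align*}
h_{8n+3}&=-h_{4n+2}-g_{4n+1}, & h_{8n+6}&=h_{4n+3}+g_{4n+2},
\end{align*}
while parts (a)--(c), (e), (f), (h) are already $\Zee$-linear in $h$ with integer coefficients $\pm1$.

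Next I would introduce the four depth-two kernel sequences $p^{(i)}=(h_{4n+i})_{n\geq 0}$ for $i\in\{0,1,2,3\}$ and compute the action of the two decimations on each one. Rewriting the linearized relations as statements about $(p^{(i)})^{[b]}$ (for instance $(p^{(0)})^{[1]}=(h_{8n+4})_n=-p^{(2)}$ and $(p^{(2)})^{[1]}=(h_{8n+6})_n=p^{(3)}+(g_{4n+2})_n$), one checks that every $(p^{(i)})^{[b]}$ is a $\Zee$-linear combination of $p^{(0)},\dots,p^{(3)}$ and of the two fixed members $(g_{4n+1})_n$ and $(g_{4n+2})_n$ of the $2$-kernel of $g$. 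Consequently the module $V:=\langle p^{(0)},p^{(1)},p^{(2)},p^{(3)}\rangle_{\Zee}+\mathcal G$ is stable under both decimations. Finally, since $h^{[0]}=(h_{2n})_n$ and $h^{[1]}=(h_{2n+1})_n$ decimate into the $p^{(i)}$ (e.g.\ $(h_{2n})_n^{[1]}=(h_{4n+2})_n=p^{(2)}$), the entire $2$-kernel of $(h_n)$ is contained in the finitely generated $\Zee$-module $V+\langle h,(h_{2n})_n,(h_{2n+1})_n\rangle_{\Zee}$, which proves $2$-regularity.

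The only genuine obstacle is the nonlinearity of $y\mapsto y^{+}$; everything else is the routine ``exhibit a decimation-stable finitely generated module'' bookkeeping. The decisive observation is that the correction term $y^{+}-y=\sign(y)$ is governed by the automatic sequence $(g_n)$, so it contributes only elements of the already finite kernel underlying $\mathcal G$, which is what keeps the ambient module finitely generated.
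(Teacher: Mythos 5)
Your proof is correct, but it handles the one genuinely nonlinear ingredient differently from the paper. The paper eliminates the operation $y\mapsto y^{+}$ by deriving \emph{purely} $h$-linear relations, namely $h_{8n+3}=-h_{4n+1}-2h_{4n+2}$ and $h_{8n+6}=-h_{2n+1}+h_{4n+1}+h_{4n+2}+2h_{4n+3}$; this costs two extra identities (proved from $|h_{8n+3}|+|h_{4n+1}|=2|h_{4n+2}|$, the analogous relation at $8n+6$, and the telescoping identity $h_{2n+1}=h_{4n+1}+h_{4n+2}+h_{4n+4}$), but it keeps the generating module entirely inside the $2$-kernel of $(h_n)$ and does not use the automaticity of $(g_n)$ at all. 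You instead write $y^{+}=y+\sign(y)$ for $y\neq 0$ and use Lemma~\ref{th:sign} to get $h_{8n+3}=-h_{4n+2}-g_{4n+1}$ and $h_{8n+6}=h_{4n+3}+g_{4n+2}$, then absorb the correction terms into the finitely generated, decimation-stable module $\mathcal G$ spanned by the finite $2$-kernel of $(g_n)$ (Theorem~\ref{th:g:auto}). Your route is arguably cleaner at the linearization step and avoids the two auxiliary identities, at the price of importing Theorem~\ref{th:g:auto} as an input; it also illustrates the general principle that a sequence of the form ``regular plus automatic correction'' is regular. Your closing step is fine as stated: the $\Zee$-module generated by the $2$-kernel of $(h_n)$ sits inside the finitely generated module $V+\langle h,h^{[0]},h^{[1]}\rangle_{\Zee}$, and since $\Zee$ is Noetherian it is itself of finite type, which is the definition of $2$-regularity used in the paper.
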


\begin{proof}
To prove the $2$-regularity of $(h_n)_{n \geq 0}$, we establish,
using Theorem~\ref{th:h:B1},  the following equalities: 
for all $n \geq 0$.
$$
\begin{array}{lll}
&h_{8n} &= \ h_{4n} \\
&h_{8n+1} &= \ h_{4n+1} \\
&h_{8n+2} &= \ h_{4n+2} \\
&h_{8n+3} &= \ - h_{4n+1} - 2h_{4n+2} \\
&h_{8n+4} &= \ - h_{4n+2} \\
&h_{8n+5} &= \ - h_{4n+3} \\
&h_{8n+6} &= \ - h_{2n+1} + h_{4n+1} + h_{4n+2} + 2h_{4n+3} \\
&h_{8n+7} &= \ h_{4n+3}. \\
\end{array}
$$
All these equalities but two of them have been already proved in Theorem~\ref{th:h:B1}. 
It remains to prove: for all $n \geq 0$
$$
\begin{aligned}
{\rm (I)} \ h_{8n+3} &= - h_{4n+1} - 2h_{4n+2} \\
{\rm (II)} \ h_{8n+6} &= - h_{2n+1} + h_{4n+1} + h_{4n+2} + 2h_{4n+3} \\
\end{aligned}
$$
By \eqref{eq:barry:need} and Theorem \ref{th:h:B1}, we have
$$
	|h_{8n+3}| + |h_{4n+1}| = 2|h_{4n+2}|
	\text{\ and\ }
	\sign(h_{8n+3}) = -\sign(h_{4n+2}) = \sign(h_{4n+1})
$$
	This implies (I). Similary, we have
$$
	|h_{8n+6}| + |h_{4n+4}| = 2|h_{4n+3}|
	\text{\ and\ }
	\sign(h_{8n+6}) = \sign(h_{4n+3}) = \sign(h_{4n+4}).\\
$$
So 
\begin{equation}\label{eq:8n6:4n4}
h_{8n+6} = 2 h_{4n+3} - h_{4n+4}.
\end{equation}
On the other hand, we have
\begin{align*}\label{eq:8n6:remain}
	h_{4n+1} &= h_{8n+1} + h_{8n+2} + h_{8n+4};\\
	h_{4n+3} &= h_{8n+5} + h_{8n+6} + h_{8n+8}.
\end{align*}
Combining the above two identities yields
\begin{equation}\label{eq:8n6:remain}
	h_{2n+1} = h_{4n+1} + h_{4n+2} + h_{4n+4}.
\end{equation}
Identity~(II) is deduced from  \eqref{eq:8n6:4n4} and \eqref{eq:8n6:remain}.
\end{proof}

\begin{remark}
The $2$-automaticity of $(g_n)$ proved in Theorem~\ref{th:g:auto} can also be proved 
directly from the four identities at the beginning of the proof of Theorem~\ref{th:h:B1}
and the fact that the sequence $(-1)^n$, $(-1)^{n+1}$ and $(-1)^{n(n+1)/2}$ are periodic.

It is also possible to deduce the $2$-automaticity of $(g_{n-1})_{n \geq 1}$ (hence of
$(g_n)_{n \geq 0}$) from the relations satisfied by the $h_n$'s in Theorem~\ref{th:h:B1},
and Lemma~\ref{th:sign}. An important {\em caveat} is that it is not true in general
that the sequence of signs of a $d$-regular sequence is $d$-automatic; here is an example
concocted from \cite[p.~168--169]{reg1}: let $e_0(n)$ and $e_1(n)$ count,
respectively,
the number of $0$'s and the number of $1$'s in the binary expansion of the integer $n$.
It is easy to see that $(e_0(n))_n$ and $(e_1(n))_n$ are $2$-regular, so is $(f(n))_n$ defined
by $f(n) = e_0(n) - e_1(n)$. It is proved in \cite[p.~168--169]{reg1} that $(|f(n)|)_n$ is not
	$2$-regular. If $(\sign(f(n)))_n$ were $2$-automatic, hence $2$-regular, the product sequence
	$(\sign(f(n)) |f(n)|)_n$ would be $2$-regular as well. But $\sign(f(n)) |f(n)| = f(n)$, a contradiction.
\end{remark} 

\section{The Hankel determinants of $1+xr(x)$} 
\label{han2}

\begin{remark}
In this section we re-use the symbols $B(x), T(x), g_n, h_n$ 
with meanings different from those in the previous
section.  We trust there will be no confusion.
\end{remark}

Let
\begin{equation}\label{eqq:Bx}
	B(x)=1+xr(x)=1+\sum_{k\geq 0} x^{2^k} = 1+x+x^2+x^4+x^8+x^{16}+\cdots
\end{equation}
The first terms of the
Hankel determinants of $B(x)$ are 
$$
H(B(x))=(1, 1, 0, -1, 0, 1, 2, -1, 0, 1, 2, 3, -2, 1, 2, -1, 0, 1, 2, 3, \ldots)
$$
Consider the sequence $(r_1, r_2, r_3, \ldots)$ obtained from $B(x)$ by shifting two times, i.e.,
$$
T(x)=\frac{B(x)-(1+x)}{x^2}=\frac{r-1}{x}.
$$
It is also the shifted Rueppel sequence.  Let $g_n=H_n(T(x))$. 
From Lemma \ref{th:g:g} and relation \eqref{eq:g:prop4}, we have
$g_0=g_1=1$, and
\begin{equation}\label{eqq:g}
	g_{2n} = (-1)^{n(n-1)/2} g_n,
	\qquad
	g_{2n+1} = (-1)^{n(n+1)/2} g_n.
\end{equation}
and
\begin{equation}\label{eqq:g:g}	
g_n=(-1)^{n} g_{2^{k+1}-n-1},
\end{equation}	
where $2^k<n+1\leq 2^{k+1}$.

\medskip

Let $h_n=H_n(B(x))$.

\begin{theorem}\label{thh:main:h}
	We have $h_0=h_1=1$. For each $n\geq 2$, we have
\begin{equation}\label{eqq:hn}
	h_n= (-1)^{n-1} (h_m - g_{m-1}).
\end{equation}
	where $2^k< n\leq 2^{k+1}$ and  $ m=2^{k+1}-n+1$.
\end{theorem}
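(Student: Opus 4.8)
The plan is to mirror the determinant argument used in the proof of Theorem~\ref{th:main:h}, since Theorem~\ref{thh:main:h} is its exact analogue with $B(x)=1+xr(x)$ in place of $1-xr(x)$. Fix $n\geq 2$, let $2^k<n\leq 2^{k+1}$ and $m=2^{k+1}-n+1$. In the $n\times n$ Hankel matrix $(b_{i+j})_{0\leq i,j\leq n-1}$ of $B$, the only nonzero entries sit where $i+j=0$ or $i+j$ is a power of $2$, and the largest such index that occurs is $2^{k+1}$ (because $2^{k+1}\leq 2n-2<2^{k+2}$). The big anti-diagonal $i+j=2^{k+1}$ thus consists of exactly $n-m=2n-2^{k+1}-1$ entries, now all equal to $b_{2^{k+1}}=+1$ rather than $-1$, and it occupies rows and columns $m,m+1,\dots,n-1$; the top-left $m\times m$ corner is precisely the Hankel matrix of order $m$ of $B$, with determinant $h_m$.

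I would then carry out the same Laplace-type expansion along this anti-diagonal as in Section~\ref{han1}. It produces two contributions: the term in which the entire anti-diagonal is used, leaving the complementary minor $h_m$; and the \emph{shifted} term, in which the complementary minor is the Hankel determinant $g_{m-1}$ of the twice-shifted sequence $T(x)=(r(x)-1)/x$. The bookkeeping is identical to the previous section except for the effect of replacing $-1$ by $+1$ on the anti-diagonal. In the $h_m$ term the product of the $n-m$ anti-diagonal entries changes from $(-1)^{n-m}$ to $(+1)^{n-m}=1$, so its coefficient loses the factor $(-1)^{n-m}$; since the $1-xr(x)$ coefficient was $(-1)^{(n-m)(n-m+1)/2}$, the new one is $(-1)^{(n-m)(n-m-1)/2}$. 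In the $g_{m-1}$ term only $n-m-1$ anti-diagonal entries appear, and because $n-m$ is odd this count is even, so the product of those entries equals $+1$ in both computations; hence that coefficient is unchanged from the previous section, namely $(-1)^{(n-m)(n-m-1)/2+1}$. Combining,
\[
h_n=(-1)^{(n-m)(n-m-1)/2}\,h_m+(-1)^{(n-m)(n-m-1)/2+1}\,g_{m-1}=(-1)^{(n-m)(n-m-1)/2}\,(h_m-g_{m-1}).
\]

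It remains to simplify the sign. For $n\geq 3$ we have $k\geq 1$, so $2^k$ is even; since $n-m=2n-2^{k+1}-1$ is odd, $(n-m)(n-m-1)/2$ has the same parity as $(n-m-1)/2=n-2^k-1\equiv n-1\pmod 2$. Hence $(-1)^{(n-m)(n-m-1)/2}=(-1)^{n-1}$ and $h_n=(-1)^{n-1}(h_m-g_{m-1})$, which is \eqref{eqq:hn}. The boundary case $n=2$ (where $k=0$, $m=1$) I would check directly: the formula reads $(-1)^{1}(h_1-g_0)=-(1-1)=0=h_2$, and similarly I would spot-check $n=3,5,6$ against the listed values of $H(B(x))$.

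The main obstacle is the sign bookkeeping in the $g_{m-1}$ contribution: one must confirm that exactly $n-m-1$ of the anti-diagonal $+1$'s are used there (so their product is $+1$) and that the complementary minor is genuinely the Hankel determinant $g_{m-1}$ of the \emph{positive} shifted Rueppel sequence, rather than its negative as in Section~\ref{han1}. The parity of $n-m$ is the crux: it is what makes the $h_m$ term pick up a sign change while the $g_{m-1}$ term does not, and it is precisely this asymmetry that turns the ``$+$'' of the previous section into the ``$-$'' of \eqref{eqq:hn}. Everything else is a verbatim transcription of the argument for Theorem~\ref{th:main:h}, so I would keep that computation as the template and re-examine only the two places where a factor of $b_{2^{k+1}}$ enters.
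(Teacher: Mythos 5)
Your proposal is correct and takes essentially the same route as the paper: the same two-term anti-diagonal decomposition into an $h_m$ contribution and a $g_{m-1}$ contribution, followed by the same sign simplification using the oddness of $n-m=2n-2^{k+1}-1$ (your exponent $(n-m)(n-m-1)/2+1$ on the $g_{m-1}$ term differs formally from the paper's $(n-m-1)(n-m-2)/2+1$, but the two coincide precisely because $n-m$ is odd). If anything, you supply more of the sign bookkeeping than the paper, which only invokes ``fundamental properties of determinants'' and the figure from the preceding section.
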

\begin{proof}
Our proof is	by the fundamental properties of determinants. 
Similar to the illustrated in Figure \ref{fig:det}
with the example $n=11, k=3, m=6$, we have
\begin{equation*}
	h_n= (-1)^{(n-m)(n-m-1)/2} h_m + (-1)^{(n-m-1)(n-m-2)/2+1} g_{m-1}.
\end{equation*}
	Since $m=2^{k+1}-n+1$, we have $n-m=2n-2^{k+1}-1$. The above identity
	implies~\eqref{eqq:hn}.
\end{proof}

\begin{lemma}\label{thh:sign}
	For each $n\geq 3$ we have 
	\begin{equation}\label{eqq:sign}
		\sign(h_n)=-g_{n-1},
\end{equation}
	with the convention that $\sign(0)=+1$.
\end{lemma}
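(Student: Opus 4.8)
The plan is to argue by strong induction on $n$, using the recurrence $h_n = (-1)^{n-1}(h_m - g_{m-1})$ of Theorem~\ref{thh:main:h}, where $m = 2^{k+1} - n + 1$ and $2^k < n \leq 2^{k+1}$. Since then $1 \leq m \leq 2^k < n$, this genuinely reduces the index. The one structural fact I need about the $g$'s is the identity $g_{n-1} = (-1)^{n-1} g_{m-1}$, which is exactly \eqref{eqq:g:g} applied to the index $n-1$: the constraint $2^k < (n-1)+1 \leq 2^{k+1}$ fixes the exponent as $k$, and $2^{k+1} - (n-1) - 1 = 2^{k+1} - n = m-1$. Granting this, the sign of $h_n$ is governed entirely by the sign of $h_m - g_{m-1}$.

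The heart of the argument is the claim that for every $m \geq 2$ one has $h_m - g_{m-1} \neq 0$ with $\sign(h_m - g_{m-1}) = -g_{m-1}$, whereas for $m = 1$ one has $h_1 - g_0 = 0$. The key observation is that for $m \geq 2$ the identity $h_m = -g_{m-1}\,|h_m|$ holds: when $m \geq 3$ it is just the inductive hypothesis $\sign(h_m) = -g_{m-1}$ rewritten (and it holds trivially when $h_m = 0$, both sides then vanishing), while for $m = 2$ it holds because $h_2 = 0$. Consequently
$$ h_m - g_{m-1} = -g_{m-1}\,|h_m| - g_{m-1} = -g_{m-1}\,(|h_m| + 1), $$
which is nonzero since $g_{m-1} = \pm 1$ and $|h_m| + 1 \geq 1$, and has sign $-g_{m-1}$. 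Feeding this into the recurrence and using $g_{n-1} = (-1)^{n-1} g_{m-1}$ gives $\sign(h_n) = (-1)^{n-1}(-g_{m-1}) = -g_{n-1}$, as desired, for all $n$ with $m \geq 2$.

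It then remains to treat the single degenerate case $m = 1$, i.e.\ $n = 2^{k+1}$. Here $h_n = (-1)^{n-1}(h_1 - g_0) = (-1)^{n-1}(1-1) = 0$, so $\sign(h_n) = +1$ by the stated convention; on the other hand $g_{n-1} = (-1)^{n-1} g_0 = (-1)^{n-1} = -1$ because $n = 2^{k+1}$ is even, whence $-g_{n-1} = +1$ as well. Thus the convention $\sign(0) = +1$ is precisely what makes the formula hold at the powers of two.

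I expect the main obstacle to be exactly this vanishing phenomenon. Unlike Lemma~\ref{th:sign} for $1 - xr(x)$, where the recurrence involves $h_m + g_{m-1}$ and never vanishes, here the minus sign forces $h_n = 0$ precisely when $m = 1$ (equivalently, when $n$ is a power of $2$), so one cannot simply conclude $h_n \neq 0$ and must instead check that the $\sign(0) = +1$ convention is consistent. A secondary point of care is that the small indices $m \in \{1,2\}$ fall below the range $n \geq 3$ of the statement, so the inductive hypothesis is unavailable there; these two values are instead handled by the direct evaluations $h_1 = 1$ and $h_2 = 0$.
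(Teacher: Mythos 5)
Your proof is correct and follows essentially the same route as the paper's: induction on $n$ via the recurrence of Theorem~\ref{thh:main:h}, the reflection identity \eqref{eqq:g:g} giving $g_{n-1}=(-1)^{n-1}g_{m-1}$, and separate treatment of the degenerate index $m=1$ (where $h_n=0$ and the $\sign(0)=+1$ convention is exactly what is needed). The only cosmetic difference is that you absorb the case $m=2$ into the general computation via the identity $h_m=-g_{m-1}\,|h_m|$, whereas the paper lists $m=1$, $m=2$, and $m\geq 3$ as three explicit cases.
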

\begin{proof}
	We prove \eqref{eqq:sign} by induction on $n$.
	First we check that \eqref{eqq:sign} is true for $n=3,4,5$.
	Suppose that \eqref{eqq:sign} is true for $3,4,\ldots, n-1$.
	Let $2^k< n\leq 2^{k+1}$ and  $ m=2^{k+1}-n+1$.
	We consider three cases.

\bigskip

\noindent {(i)} The  case $m=1$:
by	Theorem \ref{thh:main:h} and relation \eqref{eqq:g:g},  we have
\begin{equation*}
	h_n
	= (-1)^{n-1} (h_1 - g_{0})
	= 0.
\end{equation*}
	So $\sign(h_n)=1=-g_{n-1}$.

\bigskip

\noindent {(ii)} The  case $m=2$: we have
\begin{equation*}
	h_n
	= (-1)^{n-1} (h_2 - g_{1})
	= (-1)^n=-1.
\end{equation*}
	So $\sign(h_n)=-1=-g_{n-1}$.

\bigskip

\noindent {(iii)} The  case $m\geq 3$: we have
\begin{equation*}
	h_n
	= (-1)^{n-1} (h_m - g_{m-1})
	= (-1)^{n} g_{m-1} (|h_m| + 1)
	= -g_{n-1} (|h_m|+1),
\end{equation*}
	So $\sign(h_n)=-g_{n-1}$.
\end{proof}

Lemma \ref{thh:sign} and Theorem \ref{thh:main:h} imply the following corollary
about the absolute values of the Hankel determinants of $B(x)$.

\begin{corollary}\label{thh:abs:h}
	We have $|h_0|=|h_1|=1, |h_2|=0$, $|h_{2^{k+1}}|=0$, and
\begin{equation}\label{eqq:abs:h}
	|h_n| = |h_{2^{k+1}-n+1}|+1,
\end{equation}
	where $2^k< n< 2^{k+1}$.
\end{corollary}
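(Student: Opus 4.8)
The plan is to read this corollary off as a repackaging of the sign information in Lemma~\ref{thh:sign} together with the two-term recurrence of Theorem~\ref{thh:main:h}; no fresh induction is needed, since Lemma~\ref{thh:sign} already carries it. First I would dispose of the base values: $|h_0| = |h_1| = 1$ is immediate from $h_0 = h_1 = 1$ in Theorem~\ref{thh:main:h}, and $|h_2| = 0$ follows by directly evaluating the $2 \times 2$ Hankel determinant of $B(x) = 1 + x + x^2 + x^4 + \cdots$, whose top-left block is all ones. The vanishing $|h_{2^{k+1}}| = 0$ is also quick: taking $n = 2^{k+1}$ forces $m = 2^{k+1} - n + 1 = 1$, and Theorem~\ref{thh:main:h} gives $h_{2^{k+1}} = (-1)^{n-1}(h_1 - g_0)$, which is $0$ since $h_1 = 1$ and $g_0 = 1$ from \eqref{eqq:g}.

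For the recurrence \eqref{eqq:abs:h} I would fix $k$ and take $2^k < n < 2^{k+1}$, so that $m = 2^{k+1} - n + 1$ ranges over $2 \leq m \leq 2^k$. Theorem~\ref{thh:main:h} gives $h_n = (-1)^{n-1}(h_m - g_{m-1})$, so the entire statement reduces to the single claim
$$
|h_m - g_{m-1}| = |h_m| + 1.
$$
Here I would use that $|g_{m-1}| = 1$ for every index, which is clear by induction from $g_0 = g_1 = 1$ and the doubling relations \eqref{eqq:g}. For $m \geq 3$, Lemma~\ref{thh:sign} says $\sign(h_m) = -g_{m-1}$; splitting on the two values $g_{m-1} = \pm 1$ shows that $h_m$ and $-g_{m-1}$ have the same sign (or $h_m = 0$), whence $h_m - g_{m-1} = -g_{m-1}(|h_m| + 1)$ and the claim follows after taking absolute values. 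The case $m = 2$ must be treated by hand, since Lemma~\ref{thh:sign} only applies for $m \geq 3$ and in fact fails at $m = 2$; there $h_2 = 0$ and $g_1 = 1$ give $|h_2 - g_1| = 1 = |h_2| + 1$ directly. Combining the pieces yields $|h_n| = |h_m| + 1 = |h_{2^{k+1}-n+1}| + 1$.

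The one genuinely delicate point is that, unlike the sequence studied in Section~\ref{han1}, the present $h_n$ does vanish --- at $n = 2$ and at each power of two --- so I cannot simply write $h_m = \sign(h_m)\,|h_m|$ and cancel a nonzero $g_{m-1}$. The resolution is that the convention $\sign(0) = +1$ adopted in Lemma~\ref{thh:sign} is exactly what makes the bookkeeping go through: when $h_m = 0$ the relation $\sign(h_m) = -g_{m-1}$ forces $g_{m-1} = -1$, and then $|h_m - g_{m-1}| = |0 - (-1)| = 1 = |h_m| + 1$ still holds. Verifying that this sign accounting is consistent across both the zero and nonzero cases, and remembering to peel off $m = 2$ separately, is the only real care the argument requires.
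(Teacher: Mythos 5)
Your proof is correct and takes essentially the same route as the paper, which states the corollary as an immediate consequence of Theorem~\ref{thh:main:h} and Lemma~\ref{thh:sign}. The details you supply --- reducing to $|h_m - g_{m-1}| = |h_m|+1$, using $|g_{m-1}|=1$, handling $m=1$ and $m=2$ separately, and checking that the $\sign(0)=+1$ convention makes the zero case consistent --- are precisely the case analysis already carried out in the paper's proof of Lemma~\ref{thh:sign}, where the identity $h_n = -g_{n-1}(|h_m|+1)$ is derived for $m\geq 3$.
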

Now we are ready to prove Conjecture 11 of P. Barry \cite{barry}.
\begin{theorem}[Barry's Conjecture 11]\label{thh:Barry11}
	For each $n\geq 1$ we have
	\begin{equation}
		|H_{n+1}(1-xr(x))|  = s_{n}-2,
\end{equation}
	where the sequence $(s_n)_{n\geq 0}$ is defined in \eqref{def:sn}.
\end{theorem}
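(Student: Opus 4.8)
The plan is to prove the statement directly from Corollary~\ref{thh:abs:h}, i.e.\ from the vanishing and recurrence relations satisfied by the absolute values $|h_n|$ of the Hankel determinants studied in this section, \emph{without} invoking the companion result Theorem~\ref{th:Barry6} for $1-xr(x)$. In the notation of this section the quantity in the statement is the redefined Hankel determinant $h_{n+1}=H_{n+1}(1+xr(x))$, so the claim reads $|h_{n+1}| = s_n - 2$ for all $n\ge 1$. Since $s_n = 1 + \sum_{0\le k\le n} j_k$ by \eqref{def:sn}, we have $s_n - s_{n-1} = j_n$; hence, telescoping from the base value $|h_2| = 0 = s_1 - 2$ supplied by Corollary~\ref{thh:abs:h}, the identity is equivalent to the single difference relation
\[
|h_{n+1}| - |h_n| = j_n = (-1)^m, \qquad n = 2^s(2m+1),\ n \ge 2.
\]
Indeed, summing this from $2$ to $n$ gives $|h_{n+1}| = |h_2| + \sum_{2\le k\le n} j_k = s_n - s_1 = s_n - 2$. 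Thus the whole theorem reduces to establishing this difference relation, which I would prove by induction on $n$, in the same spirit as the proof of Theorem~\ref{th:Barry6} but feeding on Corollary~\ref{thh:abs:h} of the present section rather than on Corollary~\ref{th:abs:h}.

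For the induction I would split into three cases according to whether $n$ or $n+1$ is a power of $2$. If $n = 2^j$ with $j\ge 1$, then $|h_n| = 0$ and the recurrence gives $|h_{2^j+1}| = |h_{2^{j+1}-2^j}| + 1 = |h_{2^j}| + 1 = 1$, so $|h_{n+1}| - |h_n| = 1 = j_n$ (here $m=0$). If $n+1 = 2^j$ with $j\ge 2$, then $|h_{n+1}| = 0$ while $|h_n| = |h_{2^j-1}| = |h_2| + 1 = 1$, so the difference is $-1 = j_n$ (here $n$ is odd with $m$ odd). In the remaining case both $n$ and $n+1$ lie strictly between consecutive powers $2^k$ and $2^{k+1}$, so Corollary~\ref{thh:abs:h} applies to each, and with $n' = 2^{k+1}-n$ one gets
\[
|h_{n+1}| - |h_n| = |h_{2^{k+1}-n}| - |h_{2^{k+1}-n+1}| = -\bigl(|h_{n'+1}| - |h_{n'}|\bigr).
\]
Since $2\le n' < n$, the induction hypothesis yields $|h_{n'+1}| - |h_{n'}| = j_{n'}$, and writing $n' = 2^s\bigl(2(2^{k-s}-m-1)+1\bigr)$ (valid since $s\le k-1$, so $2^{k-s}$ is even) shows $j_{n'} = (-1)^{m+1} = -j_n$; hence the difference equals $j_n$, closing the step. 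This is exactly the sign bookkeeping already carried out in Theorem~\ref{th:Barry6}.

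The main obstacle, and the only place where the argument genuinely differs from the $1-xr(x)$ case, is the treatment of the boundary values at powers of $2$: here Corollary~\ref{thh:abs:h} furnishes the extra vanishing conditions $|h_2| = 0$ and $|h_{2^{k+1}}| = 0$, and it is precisely these zeros that (i) pin the base value $|h_2| = 0 = s_1 - 2$ responsible for the ``$-2$'' shift relative to Theorem~\ref{th:Barry6}, and (ii) force the two power-of-$2$ cases to be handled separately rather than through the single interior recurrence. I would therefore ground the induction by checking the small indices $n = 1,2,3$ by hand against the listed values $H(1+xr(x)) = (1,1,0,-1,0,1,2,\ldots)$, and confirm that the restriction to $n\ge 1$ is necessary, since $|h_1| = 1$ does not equal $s_0 - 2 = -1$, which is exactly why $n=0$ is excluded.
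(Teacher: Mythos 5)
Your proposal is correct and takes essentially the same route as the paper's own proof: the paper likewise reduces the claim to the base value $h_2=0=s_1-2$ together with the difference relation $|h_{n+1}|-|h_n|=j_n=(-1)^m$ for $n=2^s(2m+1)\geq 2$, and establishes it by the same three-case analysis grounded in Corollary~\ref{thh:abs:h} (the case $n=2^{k+1}$, the case $n=2^{k+1}-1$ using the vanishing $|h_{2^{k+1}}|=0$, and the interior reflection $n'=2^{k+1}-n$ with induction via $j_{n'}=(-1)^{m+1}$, using $k\geq s+1$). Your additional remarks on why the boundary zeros produce the ``$-2$'' shift and why $n=0$ must be excluded are consistent with, and implicit in, the paper's argument.
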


\begin{proof}
	Equivalently, it suffices to prove $h_2=0=s_1-2$ and
	\begin{equation}\label{eqq:barry:need}
	|h_{n+1}|-|h_{n}| = j_n= (-1)^{m},
\end{equation}
	where $n=2^s (2m+1)\geq 2$. We consider
	three cases.

\bigskip

\noindent {(i)} The case $n=2^{k+1}$, i.e., $m=0$: by \eqref{eqq:abs:h}, we have
$$
	|h_{n+1}|=  |h_{2^{k+2}-n}| +1 = |h_{n}|+1.
$$

\bigskip

\noindent	{(ii)} The case $n=2^{k+1}-1$, i.e., $m=2^k-1$: by \eqref{eqq:abs:h}, we have
\begin{align*}
	|h_n|  &= |h_{2^{k+1}-n+1}| +1 = 1 ;\cr
	|h_{n+1}|  &= 0 .
\end{align*}
So 
$$
	|h_{n+1}|-|h_n|=-1=(-1)^m.
$$

\bigskip

\noindent {(iii)} The case $2^k<n<2^{k+1}-1$ or  $2^k+1<n+1< 2^{k+1}$: by \eqref{eqq:abs:h}, we have
\begin{align*}
	|h_n|  &= |h_{2^{k+1}-n+1}| +1 ;\cr
	|h_{n+1}|  &= |h_{2^{k+1}-n}| +1 .
\end{align*}
So 
\begin{equation*}
	|h_{n+1}| - |h_{n}| = -( |h_{n'+1}| - |h_{n'}|),
\end{equation*}
	where $n'=2^{k+1} -n <n$. Hence we can prove \eqref{eqq:barry:need} by induction on $n$.
	Since  $k\geq s+1$, and
	$$
	n'=2^{k+1}-n=2^{k+1} -2^s (2m+1) = 2^s(2(2^{k-s} -m-1) +1),
	$$
By the induction hypothesis, we get
$$
	|h_{n'+1}| - |h_{n'}| = (-1)^{2^{k-s} -m-1} = (-1)^{m+1},
$$
so that
	\begin{equation*}
	|h_{n+1}| - |h_{n}| = -( |h_{n'+1}| - |h_{n'}|) = (-1)^m.
	\end{equation*}
\end{proof}

We give an algorithmic description of the sequence $H(B(x))$. Recall that $y^+$
is defined by \eqref{def:y+} for each $y\in \mathbb{Z}$.

\begin{theorem}\label{th:h:B2}
The Hankel determinants $h_n$ of the sequence $B(x)$ are characterized by 
$h_0 = h_1 = 1$, $h_2=0$ and

\medskip

	{\rm (a)} \  $h_{8n} = h_{4n}$        

	{\rm (b)} \  $h_{8n+1} = h_{4n+1}$

	{\rm (c)} \ $h_{8n+2} = h_{4n+2}$

	{\rm (d)} \ $h_{8n+3} = (h_{4n+2})^+$

	{\rm (e)} \ $h_{8n+4} = -h_{4n+2}$

	{\rm (f)} \ $h_{8n+5} = -h_{4n+3}$

	{\rm (g)} \ $h_{8n+6} = -(h_{4n+3})^+$

	{\rm (h)} \ $h_{8n+7} = h_{4n+3}$.

	\end{theorem}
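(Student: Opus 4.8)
The plan is to follow verbatim the two-part strategy used for Theorem~\ref{th:h:B1}: first pin down the \emph{signs} of the eight subsequences $h_{8n+j}$ $(0\le j\le 7)$, then pin down their \emph{absolute values}, and finally glue the two together via the relations $|y^+|=|y|+1$ and $\sign(y^+)=\sign(y)$ recorded just before the statement. A useful orientation is that, comparing with Theorem~\ref{th:h:B1}, only cases (d) and (g) change sign; the other six relations are literally identical. This single difference is exactly the footprint of passing from $\sign(h_n)=g_{n-1}$ to $\sign(h_n)=-g_{n-1}$ (Lemma~\ref{thh:sign}) together with the modified $g$-recurrence \eqref{eqq:g}.

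For the sign parts I would first establish the analogues of the four sign identities that open the proof of Theorem~\ref{th:h:B1}. Combining Lemma~\ref{thh:sign} with \eqref{eqq:g} gives
\[
\sign(h_{2n}) = (-1)^{n(n-1)/2}\sign(h_n), \qquad \sign(h_{2n+1}) = (-1)^{n(n-1)/2}\sign(h_{n+1}),
\]
where the exponent $n(n-1)/2$ now replaces the $n(n+1)/2$ of the previous section; this shift is precisely what produces the flipped signs in (d) and (g). From these two identities the four-residue relations among $\sign(h_{4n+1}),\sign(h_{4n+2}),\sign(h_{4n+3}),\sign(h_{4n+4})$ follow by a one-line substitution, and they directly supply the sign of each right-hand side in (a)--(h). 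I would phrase all of (a)--(h) as honest integer identities, so that whenever a right-hand side vanishes the corresponding left-hand side is read as $0$ as well.

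The absolute-value parts I would obtain by the same telescoping induction as before, driven by \eqref{eqq:barry:need}, namely $|h_{n+1}|-|h_n|=(-1)^m$ for $n=2^s(2m+1)\ge 2$ (Barry's Conjecture~11, Theorem~\ref{thh:Barry11}). Summing these unit steps across a block $8n,8n+1,\dots,8n+8$, exactly as in Theorem~\ref{th:h:B1}, and carrying the induction hypothesis that (a)--(h) hold below $8n$, yields the chain $|h_{8n+2}|=|h_{8n+1}|+1$, $|h_{8n+3}|=|h_{8n+2}|+1$, and so on, which rewrites into the claimed values $|h_{4n+2}|$, $|(h_{4n+2})^+|$, \dots, the terminal step producing $|h_{8n+8}|=|h_{4n+4}|$, i.e.\ case~(a) shifted. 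Since \eqref{eqq:barry:need} holds for \emph{all} $n\ge 2$ irrespective of any vanishing, the telescoping passes through the zeros at powers of $2$ (Corollary~\ref{thh:abs:h} gives $h_{2^{j}}=0$ for $j\ge 1$) without any special casing.

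The step I expect to be the main obstacle is the bookkeeping around these zeros, which have no counterpart in the $1-xr(x)$ case where every $h_n$ is nonzero. Concretely, I would verify the base cases $h_0=h_1=1,\ h_2=0$ and a couple of initial blocks by hand, and then confirm that the sign identities above remain valid under the $\sign(0)=+1$ convention at the indices $2^j$ where $h$ vanishes, so that the eight congruence-class relations continue to match correctly across block boundaries. Once this consistency at the zeros is checked, the remainder is the routine telescoping already carried out for $1-xr(x)$.
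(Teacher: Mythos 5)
Your proposal is correct and follows essentially the same route as the paper: the sign parts come from the four identities derived from Lemma~\ref{thh:sign} and \eqref{eqq:g} (with the exponent $n(n-1)/2$ replacing $n(n+1)/2$, which is exactly what flips cases (d) and (g)), and the absolute-value parts come from the same block-by-block telescoping induction as in Theorem~\ref{th:h:B1}. The only cosmetic difference is that the paper dispatches the absolute values by citing the identity $|H_n(1+xr(x))|=|H_n(1-xr(x))|-2$ for $n\geq 2$ (obtained by comparing Theorems~\ref{th:Barry6} and~\ref{thh:Barry11}) rather than re-running the telescoping with \eqref{eqq:barry:need}, but these rest on the same ingredient and your handling of the zeros at powers of $2$ under the $\sign(0)=+1$ convention is the right point to check.
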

\begin{proof}
The ``sign'' parts of (a)--(h) are easily derived from the four identities:
$$
\begin{aligned}
	\sign(h_{2n}) &= (-1)^{n(n-1)/2} \sign(h_{n}),  \\
	\sign(h_{2n+1}) &= (-1)^{n(n-1)/2} \sign(h_{n+1}) \\
	\sign(h_{4n+1}) &= (-1)^n \sign(h_{2n+1}) =  \sign(h_{4n+2}) \\
	\sign(h_{4n+4}) &= (-1)^{n+1} \sign(h_{2n+2}) = -\sign(h_{4n+3}) \\
\end{aligned}
$$
(the first two relations are consequences of \eqref{eqq:sign} and \eqref{eqq:g}, and they
immediately imply the other ones).
Comparing Theorems \ref{th:Barry6} and \ref{thh:Barry11} we have 
$$
	|H_n(1+xr(x)) | = |H_n(1-xr(x))| -2,
$$
for $n\geq 2$.
The ``absolute values'' parts are treated as in the proof of 
	Theorem~\ref{th:h:B1}, using the above relation.  
\end{proof}

\begin{theorem}
The sequence $(g_n)_{n \geq 0}$ is $2$-automatic.
\end{theorem}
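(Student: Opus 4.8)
The plan is to show that the $2$-kernel of $(g_n)_{n\geq 0}$ is finite, which is the defining property of a $2$-automatic sequence. Rather than compile a list of explicit kernel relations as in the proof of Theorem~\ref{th:g:auto}, I would exhibit a \emph{finite} family of sequences that contains $(g_n)_{n\geq 0}$ and is stable under the two decimation maps $\sigma_0 \colon (x_n)_n \mapsto (x_{2n})_n$ and $\sigma_1 \colon (x_n)_n \mapsto (x_{2n+1})_n$. Since the $2$-kernel is the smallest family containing $(g_n)_n$ and closed under these two maps, its being contained in a finite stable family immediately forces it to be finite.

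The starting point is the pair of recurrences \eqref{eqq:g}, which I rewrite as $g_{2n} = \gamma_n g_n$ and $g_{2n+1} = \delta_n g_n$, where $\gamma_n = (-1)^{n(n-1)/2}$ and $\delta_n = (-1)^{n(n+1)/2}$. The first observation is that both $(\gamma_n)_n$ and $(\delta_n)_n$ are purely periodic of period $4$: the exponents $n(n-1)/2$ and $n(n+1)/2$ are periodic modulo $2$ with period $4$, giving $(\gamma_n)_n = (1,1,-1,-1,\ldots)$ and $(\delta_n)_n = (1,-1,-1,1,\ldots)$. In the language of the decimation maps this reads $\sigma_0(g) = \gamma \cdot g$ and $\sigma_1(g) = \delta \cdot g$, where the dot denotes the pointwise product of sequences.

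This suggests considering the family $\mathcal{F} = \{\,(c_n g_n)_n : (c_n)_n \text{ is } \pm1\text{-valued and periodic with period dividing } 4\,\}$, which has at most $2^4 = 16$ elements and contains $(g_n)_n$ (take $c \equiv 1$). To verify that $\mathcal{F}$ is stable under $\sigma_0$ and $\sigma_1$, I would compute
$$
\sigma_0(c\cdot g)_n = c_{2n}\,g_{2n} = (c_{2n}\gamma_n)\,g_n, \qquad \sigma_1(c\cdot g)_n = c_{2n+1}\,g_{2n+1} = (c_{2n+1}\delta_n)\,g_n,
$$
and then check that the new sign sequences again have period dividing $4$. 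This is the only point carrying real content: if $(c_n)_n$ has period dividing $4$, then $(c_{2n})_n$ and $(c_{2n+1})_n$ each have period dividing $2$ (their values depend only on $n \bmod 2$), so multiplying them by the period-$4$ sequences $\gamma$ and $\delta$ yields sequences of period dividing $4$. Hence both images lie in $\mathcal{F}$.

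Since $\mathcal{F}$ is finite and closed under $\sigma_0$ and $\sigma_1$, the $2$-kernel of $(g_n)_n$ is contained in $\mathcal{F}$ and is therefore finite, which proves $2$-automaticity. I do not anticipate a genuine obstacle: the argument is the structural repackaging of the explicit computation in Theorem~\ref{th:g:auto} (as flagged in the Remark following it), and the one step requiring care is simply confirming that decimating a period-$4$ sign sequence keeps the period dividing $4$. Should one instead prefer the concrete style of Theorem~\ref{th:g:auto}, the same recurrences $g_{2n}=\gamma_n g_n$, $g_{2n+1}=\delta_n g_n$ together with the period-$4$ identities for $\gamma$ and $\delta$ can be unwound to produce an explicit finite list of kernel equalities.
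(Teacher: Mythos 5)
Your proof is correct, and it takes a genuinely different (more structural) route than the paper's. The paper proves this theorem the same way it proves Theorem~\ref{th:g:auto}: it exhibits an explicit finite list of kernel coincidences (here $g_{4n}=g_{2n+1}$, $g_{4n+1}=g_{2n+1}$, $g_{4n+2}=g_{2n}$, $g_{8n+7}=g_{4n+3}$, $g_{16n+3}=g_{8n+3}$, $g_{16n+11}=g_{2n}$), verified one by one via the auxiliary signs $\gamma_n$, $\delta_n$ and their decimation identities. You instead observe that $\gamma_n=(-1)^{n(n-1)/2}$ and $\delta_n=(-1)^{n(n+1)/2}$ have period $4$, and that the family $\mathcal{F}$ of sequences $(c_ng_n)_n$ with $c$ a $\pm1$-valued sequence of period dividing $4$ is finite, contains $g$, and is stable under both decimations; since the $2$-kernel is the closure of $\{g\}$ under $\sigma_0,\sigma_1$, it sits inside $\mathcal{F}$ and is finite. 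All the steps check out: the periodicity computations for $\gamma$ and $\delta$ are right, decimating a period-$4$ sign sequence does yield a period-$2$ (hence period-dividing-$4$) sequence, and containment in a finite stable family is exactly what finiteness of the kernel requires. Your argument is essentially the one alluded to in the Remark following Theorem~\ref{th:g:auto} (``periodicity of the sign sequences''), packaged as a closure argument; what it buys is brevity and a uniform bound ($|\mathcal{F}|\le 16$) without case-by-case verification, while the paper's explicit relations buy a concrete, minimal description of the kernel (hence a small automaton). Both are valid proofs of $2$-automaticity.
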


\begin{proof}
To prove that the sequence $(g_n)_{n \geq 0}$ is $2$-automatic, we have to prove that the set
of subsequences $\{(g_{2^n+j}, \ n \geq 0, \ j \in [0,2^{k-1}]\}$ is finite. It suffices to prove the
following relations: for all $n \geq 0$
\begin{align*}
g_{4n} &= g_{2n+1} \ \ \ &g_{4n+1} &= g_{2n+1} \ \ \ &g_{4n+2} &= g_{2n} \\
g_{8n+7} &= g_{4n+3} \ \ \ &g_{16n+3} &= g_{8n+3} \ \ \ &g_{16n+11}& = g_{2n}. 
\end{align*}
This can be done from Identity~(\ref{eqq:g}) by using the same method in Theorem \ref{th:g:auto}.
\end{proof}

\medskip

Now we prove that the sequence $(h_n)_{n \geq 0}$ is $2$-regular 
(see \cite[Chapter 16]{AS} for more on $d$-regular sequences).

\begin{theorem}
The sequence $(h_n)_{n \geq 0}$ is $2$-regular.
\end{theorem}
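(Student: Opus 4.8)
The plan is to repeat, with the appropriate sign changes, the strategy of Theorem~\ref{th:h:regular}. To show $(h_n)_{n\ge0}$ is $2$-regular it suffices to exhibit finitely many subsequences spanning a $\mathbb{Z}$-module closed under the maps $n\mapsto 2n$ and $n\mapsto 2n+1$; concretely, I would establish eight relations writing each $h_{8n+j}$ ($0\le j\le 7$) as a $\mathbb{Z}$-linear combination of the level-$4$ subsequences $h_{4n},h_{4n+1},h_{4n+2},h_{4n+3}$ together with $h_{2n+1}$. Six of these, namely those for $j\in\{0,1,2,4,5,7\}$, are already the linear identities (a), (b), (c), (e), (f), (h) of Theorem~\ref{th:h:B2}. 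All the work is in removing the nonlinear operation $y\mapsto y^+$ from the remaining two, (d) $h_{8n+3}=(h_{4n+2})^+$ and (g) $h_{8n+6}=-(h_{4n+3})^+$.

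For (d) I would combine the absolute-value information with the sign information exactly as in Theorem~\ref{th:h:regular}. From $h_{8n+3}=(h_{4n+2})^+$ we have $|h_{8n+3}|=|h_{4n+2}|+1$, while \eqref{eqq:barry:need} applied at $4n+1=2^0(2\cdot 2n+1)$ gives $|h_{4n+2}|-|h_{4n+1}|=j_{4n+1}=1$; adding these yields $|h_{8n+3}|+|h_{4n+1}|=2|h_{4n+2}|$. The sign identities from the start of the proof of Theorem~\ref{th:h:B2} give $\sign(h_{8n+3})=\sign(h_{4n+2})=\sign(h_{4n+1})$, so $h_{8n+3}$ and $h_{4n+1}$ share the sign of $h_{4n+2}$ and therefore $h_{8n+3}=2h_{4n+2}-h_{4n+1}$. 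The same computation for (g), using $|h_{4n+4}|-|h_{4n+3}|=j_{4n+3}=-1$ and $\sign(h_{8n+6})=\sign(h_{4n+4})=-\sign(h_{4n+3})$, gives $h_{8n+6}=-2h_{4n+3}-h_{4n+4}$.

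This last relation still contains the shifted term $h_{4n+4}=h_{4(n+1)}$, which is not one of my level-$4$ subsequences, so the module is not yet closed; eliminating it is the crux. Following the ``combining'' device of Theorem~\ref{th:h:regular}, I would prove the auxiliary identity $h_{2n+1}=h_{4n+1}-h_{4n+2}-h_{4n+4}$ for all $n$ by noting that its even instance (replace $n$ by $2n$) reduces, via (b), (c), (e), to a tautology, and its odd instance (replace $n$ by $2n+1$) reduces, via (f), (a), and the relation $h_{8n+6}=-2h_{4n+3}-h_{4n+4}$ just obtained, to a tautology as well; since every index is of one of these two forms, the identity holds for all $n$. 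Substituting $h_{4n+4}=-h_{2n+1}+h_{4n+1}-h_{4n+2}$ into the expression for $h_{8n+6}$ yields the final linear relation $h_{8n+6}=h_{2n+1}-h_{4n+1}+h_{4n+2}-2h_{4n+3}$. With all eight relations now linear, the seven subsequences $h_n,h_{2n},h_{2n+1},h_{4n},h_{4n+1},h_{4n+2},h_{4n+3}$ span a $\mathbb{Z}$-module stable under $n\mapsto 2n$ and $n\mapsto 2n+1$, so $(h_n)_{n\ge0}$ is $2$-regular.

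The main obstacle, and the genuinely new feature compared with the $1-xr(x)$ case, is that here $(h_n)$ vanishes at the powers of $2$ (Corollary~\ref{thh:abs:h}), so the sign-based linearizations of (d) and (g) break down precisely at the finitely-many-patterned indices where an argument ($h_{4n+2}$ with $4n+2$ a power of $2$, or $h_{4n+4}$ with $4n+4$ a power of $2$) is zero and the convention $\sign(0)=+1$ no longer matches the sign of the other terms. At these indices I would simply verify the two linear relations directly from (d), (g) and the definition \eqref{def:y+} of $y^+$ (for example $h_3=(h_2)^+=-1=2h_2-h_1$), which costs only a bounded case check and does not affect the module-closure argument.
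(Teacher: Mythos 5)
Your proof is correct and follows essentially the same route as the paper: the paper's own proof consists of stating exactly the eight linear relations you derive (including $h_{8n+3}=-h_{4n+1}+2h_{4n+2}$ and $h_{8n+6}=h_{2n+1}-h_{4n+1}+h_{4n+2}-2h_{4n+3}$) and invoking the method of Theorem~\ref{th:h:regular}, which is precisely the sign/absolute-value linearization and the elimination of $h_{4n+4}$ via the auxiliary identity $h_{2n+1}=h_{4n+1}-h_{4n+2}-h_{4n+4}$ that you carry out. Your added care at the indices where $h_n=0$ is a sensible supplement, not a divergence.
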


\begin{proof}
To prove the $2$-regularity of $(h_n)_{n \geq 0}$, we establish,
using Theorem~\ref{th:h:B1},  the following equalities: 
for all $n \geq 0$.
$$
\begin{array}{lll}
&h_{8n} &= \ h_{4n} \\
&h_{8n+1} &= \ h_{4n+1} \\
&h_{8n+2} &= \ h_{4n+2} \\
&h_{8n+3} &= \ - h_{4n+1} + 2h_{4n+2} \\
&h_{8n+4} &= \ - h_{4n+2} \\
&h_{8n+5} &= \ - h_{4n+3} \\
&h_{8n+6} &= \  h_{2n+1} - h_{4n+1} + h_{4n+2} - 2h_{4n+3} \\
&h_{8n+7} &= \ h_{4n+3}. \\
\end{array}
$$
This can be done by using the same method as in Theorem \ref{th:h:regular}.
\end{proof}

\section{The Hankel determinants of $r(x)/(r(x)-x)$} 
\label{han3}

Let us recall the following useful result \cite[Lemma 2.2]{Han2016Adv} 
\begin{lemma}\label{th:F:G}
Let $k$ be a nonnegative integer and let $F(x), G(x)$ be two power series 
satisfying
$$
	F(x)=\frac{x^k}{ 1+u(x)x - x^{k+2} G(x)},
$$
where $u(x)$ is a polynomial of degree less than or equal to $k$. Then
we have
$$
H_n(F)=(-1)^{k(k+1)/2} H_{n-k-1}(G). 
$$
\end{lemma}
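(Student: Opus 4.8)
The plan is to prove the identity by direct manipulation of the Hankel matrix $M_n := (f_{i+j})_{0\le i,j\le n-1}$ of $F(x)=\sum_n f_n x^n$, using the defining functional equation as the engine for column (and row) operations. First I would record the coefficient consequences of $F(x)\bigl(1+u(x)x-x^{k+2}G(x)\bigr)=x^{k}$. Writing $u(x)=\sum_{i=0}^{k}u_i x^{i}$ and $G(x)=\sum_{j\ge 0}g_j x^{j}$, comparison of the coefficient of $x^{m}$ yields
\[
f_m+\sum_{i=0}^{k}u_i f_{m-1-i}-\sum_{j\ge 0}g_j f_{m-k-2-j}=[\,m=k\,],
\]
valid together with the basic facts that $f_m=0$ for $m<k$ and $f_k=1$, since $D(0)=1$ (where $D(x)=1+u(x)x-x^{k+2}G(x)$) forces $F(x)=x^{k}+O(x^{k+1})$.

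Next I would exploit the resulting shape of $M_n$. The vanishing $f_m=0$ for $m<k$ makes the entry $f_{i+j}$ zero whenever $i+j<k$ and equal to $1$ on the anti-diagonal $i+j=k$; hence the top-left $(k+1)\times(k+1)$ block (rows and columns $0,\dots,k$) is anti-lower-triangular with $1$'s on its main anti-diagonal. Its determinant is the sign of the order-reversing permutation of $k+1$ symbols, namely $(-1)^{\binom{k+1}{2}}=(-1)^{k(k+1)/2}$, which already identifies the source of the claimed sign. The strategy is to use these anti-diagonal $1$'s as pivots and the recurrence above as the tool for turning $M_n$ into a block anti-triangular matrix whose two diagonal blocks are this corner and the genuine Hankel matrix $(g_{i+j})_{0\le i,j\le n-k-2}$ of $G$.

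Concretely, I would first eliminate the influence of $1+u(x)x$ by the unipotent column operation that adds $\sum_{i=0}^{k}u_i\cdot(\text{column }j-1-i)$ to column $j$, i.e.\ multiply $M_n$ on the right by the banded unit upper-triangular matrix whose superdiagonals carry the $u_i$; this has determinant $1$. By the recurrence, each resulting entry below the anti-diagonal becomes a coefficient of $x^{k+2}G(x)F(x)$, while the zero triangle and the anti-diagonal of $1$'s are preserved. A second, symmetric reduction — using the explicit $F^{-1}=x^{-k}D$ via row operations pivoted on the $1$'s in rows $0,\dots,k$ — strips the factor $F$ out of $GF$, converting the deep block into the bare coefficients $g_{i+j}$. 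Expanding the resulting block anti-triangular determinant then gives $H_n(F)=(-1)^{k(k+1)/2}\,H_{n-k-1}(G)$, the sign being contributed entirely by the corner block.

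The delicate point, and the step I expect to be the main obstacle, is the bookkeeping at the boundary: the recurrence holds only for entries with $i+j\ge k+1$, so near the zero triangle and the $k+1$ pivot positions the operations must be applied and interpreted with care, and one must verify that the initial coefficients $f_1,\dots,f_k$, which encode $u(x)$ rather than $G$, are fully absorbed against the pivots and leave no residue in the Schur complement. A clean way to organize this is induction on $n$ (peeling the last row and column), or alternatively first reducing to the case $k=0$, where the statement is the familiar one-step relation $H_n(F)=H_{n-1}(G)$ for $F=1/(1+u_0x-x^{2}G)$, and then accounting for the $x^{k}$ numerator through the anti-diagonal reversal; either way the substantive content is the same verification that the $u$-data cancels and the $g$-data survives. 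As an independent check I would note the continued-fraction reading: the hypothesis exhibits $F$ as $k+1$ levels of a Jacobi-type expansion over the tail $G$, and the formula is exactly the corresponding shift in the Hankel-determinant dictionary.
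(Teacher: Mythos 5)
First, a point of comparison: the paper does not prove this lemma at all --- it is quoted verbatim from \cite[Lemma 2.2]{Han2016Adv} --- so there is no internal proof to measure yours against. Your opening observations are correct and are indeed the right ingredients: $f_m=0$ for $m<k$ and $f_k=1$ because $F=x^k(1+O(x))$; the coefficient recurrence coming from $FD=x^k$ with $D(x)=1+u(x)x-x^{k+2}G(x)$; and the sign $(-1)^{k(k+1)/2}$ arising as the determinant of the $(k+1)\times(k+1)$ anti-identity in the corner.

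The gap is in the reduction itself. If you feed only the $1+u(x)x$ part of $D$ into the column operations, then for $j\ge k+1$ the $(i,j)$ entry becomes $[x^{i+j}]\bigl(x^{k+2}G(x)F(x)\bigr)=(GF)_{i+j-k-2}$, and since $(GF)_m$ vanishes only for $m<k$, the top $k+1$ rows still carry nonzero entries in columns $j>k$ as soon as $i+j\ge 2k+2$ (e.g.\ the $(k,k+2)$ entry is $g_0$). So the matrix is \emph{not} block anti-triangular after your first stage, and the ``symmetric'' second stage you invoke to strip $F$ out of $GF$ is precisely the step that needs justification; as described it mixes the corner pivots with the deep block and it is not shown to terminate in the claimed form. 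The clean repair is to use the full denominator in one pass: multiply the Hankel matrix on the right by the unit upper-triangular matrix $T$ with $T_{lj}=d_{j-l}$, where $D=\sum_m d_m x^m$. The $(i,j)$ entry becomes $\sum_{l=0}^{j}f_{i+l}d_{j-l}=[x^{i+j}](FD)-\sum_{m=0}^{i-1}f_m d_{i+j-m}$; for $i\le k$ the correction term vanishes because $f_m=0$ for $m<k$, so rows $0,\dots,k$ are exactly $\delta_{i+j,k}$, and Laplace expansion along them yields $(-1)^{k(k+1)/2}$ times the minor on rows and columns $k+1,\dots,n-1$. In that minor every surviving $d_{i+j-m}$ has index at least $k+2$, hence equals $-g_{i+j-m-k-2}$, and after shifting both indices by $k+1$ the block reads $\sum_{c=0}^{a}f_{k+c}\,g_{a+b-c}$, i.e.\ a unit lower-triangular matrix (entries $f_{k+a-c}$) times the Hankel matrix $(g_{a+b})$. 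Note in particular that the deep block is not the Hankel matrix of $G$ itself, as you assert, but a unit-triangular multiple of it --- which is all you need. With this single-pass version your argument closes; as written, the decisive step is still open.
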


Let
\begin{equation}\label{eqqq:Bx}
	B_0(x)=\frac{r(x)}{r(x)-x}= 
	1+x -x^4 + x^7 - x^8 -x^{10} +2x^{11} + \cdots
\end{equation}
The first terms of the
Hankel determinants of $B_0(x)$ are 
$$
H(B_0(x))=(1, 1, -1, 1, 1, -1, 1, 1, 1, -1, 1, -1, -1, -1, 1, 1, 1, \ldots)
$$

\begin{theorem}[Barry's Conjecture 16]\label{th:conj16}
	$$
	H_n\left(\frac{r(x)}{r(x)-x}\right)=\sign(H_n(1-xr(x))).
	$$
\end{theorem}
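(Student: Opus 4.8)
The plan is to reduce the Hankel determinants of $B_0(x)=r(x)/(r(x)-x)$ to those of the shifted series $T(x)=(1-r(x))/x$ of Section~\ref{han1} by a single application of Lemma~\ref{th:F:G}, and then to quote Lemma~\ref{th:sign}, which already shows $\sign(H_n(1-xr(x)))=g_{n-1}$, where $g_n=H_n(T)$.

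First I would bring $B_0$ into the form demanded by Lemma~\ref{th:F:G} with $k=0$. Since $1/B_0=(r-x)/r=1-x/r$ and $xT=1-r$, one finds $1/B_0=1-x-x^2(T/r)$; that is, $B_0=\dfrac{1}{1+u(x)x-x^2G(x)}$ with the constant polynomial $u(x)=-1$ and $G=T/r$. Lemma~\ref{th:F:G} then yields $H_n(B_0)=H_{n-1}(T/r)$ for $n\geq1$ (while $H_0(B_0)=1$ trivially).

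The crux, and the step I expect to be the main obstacle, is to show that dividing $T$ by $r$ changes no Hankel determinant, i.e.\ $H_m(T/r)=H_m(T)=g_m$. I would derive this as an invariance principle already contained in Lemma~\ref{th:F:G} (case $k=0$): if two power series $F_1,F_2$ with constant term $1$ have reciprocals that differ only in the coefficient of $x$, then writing $1/F_i=1+u_ix-x^2G$ the tail $G$ is the same for both, so $H_n(F_1)=H_{n-1}(G)=H_n(F_2)$. I apply this to $F_1=-T$ and $F_2=-T/r$, both of which have constant term $1$ because $-T=1+O(x^2)$ and $r(0)=1$. The relation $r=1-xT$ gives $\dfrac{1}{-T/r}-\dfrac{1}{-T}=\dfrac{r-1}{-T}=x$, so the two reciprocals differ exactly by the linear term $x$; hence $H_m(-T/r)=H_m(-T)$, and since negating a series multiplies its $m\times m$ Hankel determinant by $(-1)^m$, we conclude $H_m(T/r)=H_m(T)=g_m$.

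Putting the pieces together, for $n\geq1$ we obtain $H_n(B_0)=H_{n-1}(T/r)=H_{n-1}(T)=g_{n-1}$, which equals $\sign(H_n(1-xr(x)))$ by Lemma~\ref{th:sign}; the case $n=0$ is immediate, both sides being $1$. The points needing the most care are the constant-term normalizations (working with $-T$ rather than $T$ and tracking the factor $(-1)^m$) and the verification that the quadratic-and-higher part of the reciprocal is genuinely identical for $-T$ and $-T/r$, which is precisely what $r=1-xT$ guarantees.
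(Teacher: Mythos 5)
Your proposal is correct and takes essentially the same route as the paper: your first reduction $H_n(B_0)=H_{n-1}(T/r)$ is the paper's $H_n(B_0)=H_{n-1}(B_1)$ with $B_1=T/r$, and the common tail $G^*$ you extract from $1/(-T)$ and $1/(-T/r)$ is exactly the paper's $U=\frac{r-1-x}{(r-1)x^2}$, so your ``linear-term invariance'' step is precisely the paper's pair of identities $(-1)^nH_n(B_1)=H_{n-1}(U)=(-1)^nH_n(G)$. The difference is purely presentational.
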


\begin{proof}
	We have 
	$$
	B_0(x)=\frac{1}{1-x-x^2 B_1(x)},
	$$
	where
	$$
	-B_1(x)=\frac{r-1}{xr} = \frac{1}{1+x -x^2 U(x)},
	$$
	and where
	$$
	U(x)=\frac{r-1-x}{(r-1)x^2}.
	$$
	Also,
	Let
	$$
G(x)=\frac{1-r(x)}{x}.
$$
Then
	$$
	-G(x)=\frac{1}{1-x^2 U(x)}.
	$$
	By Lemma \ref{th:F:G} we have, successively,
	\begin{align*}
		H_n(B_0)&=H_{n-1}(B_1),\\
		(-1)^{n}H_{n}(B_1)&=H_{n-1}(U),\\
		(-1)^{n}H_{n}(G)&=H_{n-1}(U).
\end{align*}
So by Lemma \ref{th:sign}, we derive  the equality
	\begin{equation*}	
	H_n(B_0)=\sign(H_n(1-xr)).
	\end{equation*}	
\end{proof}

\subsection*{Acknowledgments} The first author warmly thanks F. Durand for
discussions about Cobham's theorem and its 
generalizations.



\begin{thebibliography}{99}

\bibitem{abs} J.-P. Allouche, B. Cloitre, and V. Shevelev.
\newblock Beyond odious and evil.
\newblock {\it Aequationes Math.} {\bf 90} (2016), 341--353.

\bibitem{reg1} J.-P. Allouche and J. Shallit.
\newblock  The ring of $k$-regular sequences.
\newblock {\em Theoret. Comput. Sci.}, {\bf 98} (1992) 163--197.

\bibitem{ubiq} J.-P. Allouche and J. Shallit.
\newblock The ubiquitous Prouhet-Thue-Morse sequence.
\newblock In {\it Sequences and Their Applications (Singapore, 1998)},
Springer Ser. Discrete
Math. Theor. Comput. Sci., Springer, London, 1999, pp.~1--16.

\bibitem{AS} 
J.-P. Allouche and J. Shallit.
\newblock {\em Automatic Sequences. Theory, Applications, Generalizations.} 
\newblock Cambridge University Press, Cambridge, 2003.

\bibitem{barry} P. Barry.
\newblock Some observations on the Rueppel sequence and associated Hankel 
determinants.
\newblock Preprint, 2020, \url{https://arxiv.org/abs/2005.04066}.

\bibitem{Cigler2019}
J. Cigler.
\newblock Some observations about determinants which are connected with
  {C}atalan numbers and related topics.
\newblock Preprint, 2019, \url{https://arxiv.org/abs/1902.10468}.

\bibitem{cateland} E. Cateland.
\newblock {\it Suites digitales et suites $k$-r\'eguli\`eres}.
\newblock Th\`ese, Universit\'e
Bordeaux 1, 1992. Available at \url{https://tel.archives-ouvertes.fr/tel-00845511}.

\bibitem{dekking} M. Dekking.
\newblock Morphic words, Beatty sequences and 
integer images of the Fibonacci language.
\newblock {\it Theoret. Comput. Sci.} {\bf 809} (2020), 407--417.

\bibitem{Deutsch2006Sagan}
E. Deutsch and B.~E. Sagan.
\newblock Congruences for {C}atalan and {M}otzkin numbers and related
  sequences.
\newblock {\em J. Number Theory} {\bf 117} (2006), 191--215.

\bibitem{durand} F. Durand.
\newblock Cobham's theorem for substitutions.
\newblock {\it J. Eur. Math. Soc.} {\bf 13} (2011), 1799--1814.

\bibitem{Han2016Adv}
G.-N. Han.
\newblock Hankel continued fraction and its applications.
\newblock {\em Adv. Math.} {\bf 303} (2016), 295--321.

\bibitem{lambek-moser} J. Lambek and L. Moser.
\newblock Inverse and complementary sequences of natural 
numbers.
\newblock {\it Amer. Math. Monthly} {\bf 61} (1954), 454--458.

\bibitem{mendes1} M. Mend\`es France.
\newblock Principe de la sym\'etrie perturb\'ee.
\newblock In {\it S\'emin. Delange-Pisot-Poitou, Paris 1979--80}.
\newblock Reprinted in {\it Prog. Math.} {\bf 12}, Birkh\"auser, 1981,
pp.~77--98. 

\bibitem{mendes2} M. Mend\`es France.
\newblock Principe de la sym\'etrie perturb\'ee, papiers pli\'es et 
dimension des courbes.
\newblock In {\it S\'eminaire de Philosophie et Math\'ematiques},
1981, fascicule 7, pp.~1--11.

\bibitem{oeis} N. J. A. Sloane et al.
\newblock {\it On-Line Encyclopedia of Integer Sequences}.
\newblock Available at \url{https://oeis.org}.

\bibitem{shallit-norgard} J. Shallit.
\newblock The mathematics of Per N{\o}rg{\aa}rd's rhythmic infinity system.
\newblock {\it Fibonacci Quart.} {\bf 43} (2005), 262--268.

\end{thebibliography}
\end{document}